\documentclass[11pt]{amsart}
\usepackage[margin=1.1in]{geometry}
\usepackage[T1]{fontenc}
\usepackage{lmodern}
\usepackage{amsmath,amssymb,mathtools,mathrsfs}
\usepackage{microtype}
\usepackage{needspace}
\usepackage{tikz}
\usetikzlibrary{arrows.meta,positioning}
\usepackage{tikz-cd}
\usepackage[hidelinks]{hyperref}

\hypersetup{
	bookmarksdepth=3,
	colorlinks,
	linkcolor={red!50!black},
	citecolor={blue!50!black},
	urlcolor={blue!80!black}
}

\numberwithin{equation}{section}
\newtheorem{theorem}{Theorem}[section]
\newtheorem{proposition}[theorem]{Proposition}
\newtheorem{lemma}[theorem]{Lemma}
\newtheorem{corollary}[theorem]{Corollary}
\theoremstyle{definition}

\theoremstyle{remark}
\newtheorem{remark}[theorem]{Remark}
\newtheorem{example}[theorem]{Example}

\newcommand{\op}[1]{\operatorname{#1}}

\newcommand{\E}{\mathcal E}
\newcommand{\C}{\mathcal C}
\newcommand{\Pcal}{\mathcal P}
\newcommand{\CC}{\mathrm{CC}}
\newcommand{\kk}{\mathbb C}
\newcommand{\PP}{\mathbb P}
\newcommand{\ZZ}{\mathbb Z}
\newcommand{\add}{\op{add}}
\newcommand{\Hom}{\op{Hom}}
\newcommand{\Ext}{\op{Ext}}
\newcommand{\End}{\op{End}}
\newcommand{\Gr}{\op{Gr}}
\newcommand{\CM}{\op{CM}}
\newcommand{\GP}{\op{GP}}
\newcommand{\ind}{\op{ind}}
\newcommand{\coker}{\op{coker}}
\newcommand{\Frac}{\op{Frac}}
\newcommand{\tw}{\op{tw}}
\newcommand{\eps}{\varepsilon}
\newcommand{\RR}{\mathbb R}
\newcommand{\QQ}{\mathbb Q}
\newcommand{\Newt}{\op{Newt}}
\newcommand{\NP}{\mathsf N}
\newcommand{\dv}{\underline{\dim}}
\newcommand{\coind}{\op{coind}}
\newcommand{\res}{\op{res}}

\title{Syzygy Transformations of Cluster Characters and Geometric Twists}
\author{Jiarui Fei}
\address{School of Mathematical Sciences, Shanghai Jiao Tong University}
\email{jiarui@sjtu.edu.cn}
\thanks{The author was supported in part by the National Natural Science Foundation of China (Nos.~12131015 and 12571038).}
\date{}
\subjclass[2020]{Primary 13F60; Secondary 14M15, 16G20, 16G70, 18G80}
\keywords{cluster characters, Frobenius categories, syzygies, Auslander--Reiten translation, $F$-polynomials, geometric twists, positroid varieties, tropicalization}
\hypersetup{pdftitle={Syzygy Transformations of Cluster Characters and Geometric Twists},pdfsubject={Syzygies, geometric twists, and tropical applications to all cluster characters}}
\begin{document}

\begin{abstract}
We use the full multiplication formula to study syzygy and suspension of
cluster characters. In stably 2-Calabi--Yau Frobenius categories, syzygy
and cosyzygy induce inverse automorphisms of localized character algebras;
in Hom-finite 2-Calabi--Yau triangulated categories, the same uniqueness
principle transports all cluster characters under suspension. The latter
gives our Auslander--Reiten $F$-polynomial identity. No mutation
reachability is required. We extend positroid twist identities to all
objects, recover the Grassmannian and unipotent-cell formulas, identify
partition-function algebras before boundary localization, and derive
tropical covariance when the relevant tropical coordinate map is a
homeomorphism.
\end{abstract}
\maketitle

\section*{Introduction}

We study the action of \emph{syzygy} on \emph{cluster characters}.
Let $\E$ be a \emph{Frobenius category} whose \emph{stable category}
is Hom-finite and 2-Calabi--Yau. Given a cluster character $\CC$, choose a \emph{conflation}
\[ 0\longrightarrow\Omega M\longrightarrow P_M\longrightarrow M\longrightarrow0 \]
with $P_M$ \emph{projective-injective}. We consider the character value $\CC(\Omega M)/\CC(P_M)$.
The denominator records the projective-injective term of the conflation. In Theorem~\ref{thm:main}, we prove
that these values respect all algebraic relations among the original
characters and define an automorphism of the \emph{localized character
algebra}. The inverse is given by the corresponding character value of
a \emph{cosyzygy}, defined by a conflation
$0\to M\to I_M\to\Omega^{-1}M\to0$ with $I_M$ projective-injective:
\[ \mathsf t_-\bigl(\CC(M)\bigr)=\frac{\CC(\Omega M)}{\CC(P_M)},\qquad \mathsf t_+\bigl(\CC(M)\bigr)=\frac{\CC(\Omega^{-1}M)}{\CC(I_M)}. \]
We call $\mathsf t_-$ and $\mathsf t_+$ the \emph{syzygy and
cosyzygy transformations} of cluster characters. We assume that the characters of the syzygies of the initial summands
are nonzero.
There is no reachability, rigidity, or genericity condition on $M$.
For each \emph{frozen variable} $x_f=\CC(T_f)$, where $T_f$ is
projective-injective, both transformations send $x_f$ to its
reciprocal $x_f^{-1}$.

There is a parallel triangulated statement. For a Hom-finite
2-Calabi--Yau triangulated category with cluster-tilting object
$U=\bigoplus_iU_i$, Theorem~\ref{thm:triangulated-transport} says that a
coordinate homomorphism agreeing with suspension on the characters of
$\Sigma U_i$ transports every cluster character under $\Sigma$. This is
an intrinsic transport statement, independent of a Frobenius realization;
birationality is a separate issue.

Both results come from the established \emph{full multiplication formula}.
Palu proves the triangulated formula \cite[Theorem~1.1]{Pal12}, while the
Frobenius forms used here are recalled in
Sections~\ref{sec:frobenius} and \ref{sec:geometric}. A multiplicative
function satisfying the full formula is determined by its values on one
cluster-tilting object. In the Frobenius setting the horseshoe lemma
applies this principle to $\CC(\Omega M)/\CC(P_M)$; in the triangulated
setting suspension preserves the formula. We use the multiplication-formula
method of Assem--Dupont--Schiffler \cite[Theorem~6.13]{ADS14} and
Keller--Plamondon--Qin \cite[Theorem~4.1 and Remark~4.2]{KPQ}.

Our original motivation was the \emph{Auslander--Reiten transformation
formula} for $F$-polynomials announced in \cite[Theorem~1.9]{Fei}.
We prove this identity as Theorem~\ref{thm:AR} by applying the
triangulated transport theorem to the generalized cluster category.
\subsection*{Geometric twists}
We identify a geometric twist by comparing its values on one cluster;
Theorem~\ref{thm:main} then gives its action on every character. For
Grassmannians and unipotent cells, the resulting all-object formulas were
already known: we recover the Grassmannian formula of Jensen--King--Su
\cite[Proposition~9.14]{JKS24} and the unipotent-cell formula of
Geiss--Leclerc--Schr\"oer \cite[Theorem~6]{GLS12}.

The new geometric application is the positroid case. \c{C}anak\c{c}\i--King--Pressland
proved the rank-one formula \cite[Theorem~12.2]{CKP24}, and Pressland
extended it to reachable rigid objects \cite[Theorem~5.27]{Pre}. We prove
the formula for every object of $\GP\mathcal B_\Pcal$, for every positroid
$\Pcal$, including disconnected ones. In particular, the normalized
partition function of Jensen--Riordan--Su is the twist of the character on
every object of this category, answering \cite[Remark~7.7]{JRS26} in this
setting. Corollary~\ref{cor:partition-algebra} shows that these partition
functions generate the cluster algebra of the partition seed before
boundary localization, removing the reachability condition in
\cite[Remark~7.13]{JRS26}. We also identify the seed isomorphism with the
geometric twist after projection and determine its value on every character.
The disconnected case is reduced to the connected one by comparing the
frozen factorization ideals of the relevant corner algebras.

The same cosyzygy character values appear in Fraser--Keller's
construction of a \emph{quasi-cluster automorphism}, under reachability
of the shifted cluster-tilting object
\cite[Theorem~A.9 and Example~A.10]{FrK23}. We use the multiplication
formula to dispense with this reachability condition and obtain the
identity for all objects. Our conclusion concerns the function field
and the localized character algebra; the quasi-cluster property is a
separate assertion. The horseshoe argument extends Pressland's
exchange-sequence calculation \cite[proof of Theorem~5.27]{Pre} to
extension spaces of arbitrary dimension.

\subsection*{AR-translation of \texorpdfstring{$F$}{F}-polynomials and tropical applications}
The motivating identity, Theorem~\ref{thm:AR}, takes the form
\[ F_M(\psi(z))=z^{-\dv M}F_I(z)^{\check\delta_M}F_{\tau M}(z),\qquad \psi_i(z)=z_i^{-1}\prod_jF_{I_j}(z)^{b_{ij}}. \]
Here $B=(b_{ij})$ is the exchange matrix, $\check\delta_M$ is the
coweight of $M$, and $F_I(z)^c=\prod_iF_{I_i}(z)^{c_i}$. The variables
$z_1,\ldots,z_m$ are algebraically independent, one for each quiver
vertex. Theorem~\ref{thm:AR} proves the identity in
$\QQ(z_1,\ldots,z_m)$, with no rank assumption on $B$. The use of
independent $F$-variables is essential when $B$ is singular, since the
usual monomial specialization can lose information.

The proof applies Theorem~\ref{thm:triangulated-transport} in the
generalized cluster category, where the characters of $\Sigma^2T_i$ are
given by injective $F$-polynomials. Proposition~\ref{prop:AR-inverse}
then gives the birational inverse in terms of the dual $F$-polynomials of
projectives, and Corollary~\ref{cor:triangulated-character} gives the
corresponding suspension automorphism.

We write $\CC^{\mathrm{trop}}_M$ for the \emph{tropical cluster
character}, and reserve $f_M$ for the tropical $F$-polynomial,
following \cite{FeiT23}. Let $D_-$ and $D_+$ be the tropical coordinate
maps of the character transformation and its inverse. If either map is a
homeomorphism, then they are inverse integral piecewise-linear
homeomorphisms. If $\CC(P_M)=x^{p_M}$, then
\[ \CC^{\mathrm{trop}}_{\Omega M}(\omega)=\CC^{\mathrm{trop}}_M(D_-(\omega))+\omega(p_M). \]
No positivity of the character coefficients is required: the homeomorphism
condition prevents cancellation from changing the leading degree.
Section~\ref{subsec:tropical-AR} gives the parallel transformation laws for
tropical $F$-polynomials and Hom dimensions, followed by their pairing
identities.

The \emph{Markov quiver} provides a running example in which the initial
cluster-tilting object and its shift are not mutation-connected
\cite[Example~4.3]{Pla13}. We compute the projective $F$-polynomials
explicitly, obtain the suspension transformation on all cluster characters,
and verify the Auslander--Reiten identity on a nonrigid family.

We also relate the AR substitution to stability scattering. With principal
coefficients, the injective and projective $F$-polynomials determine the
chamber products and yield a criterion for comparison with the cluster
scattering diagram.

\subsection*{Organization}
Section~\ref{sec:frobenius} proves the Frobenius syzygy theorem and the
parallel triangulated suspension-transport theorem. Section~\ref{sec:AR}
applies the latter to Auslander--Reiten translation and $F$-polynomials.
We identify the geometric twists and their normalizations in
Section~\ref{sec:geometric}. Section~\ref{sec:tropical} contains the
tropical results and examples. Appendix~\ref{app:positroid} gives the
convention comparison and the frozen-ideal argument for disconnected
positroids.

\section{Syzygy, cosyzygy, and suspension of cluster characters}\label{sec:frobenius}

We first prove the Frobenius statement and then its intrinsic triangulated
counterpart, used in Section~\ref{sec:AR}.

We work over $\mathbb C$, and write $\chi_c$ for compactly supported
Euler characteristic. Let $\E$ be an idempotent-complete,
Krull--Schmidt, $\kk$-linear Frobenius exact category. Suppose that
$\Ext^1_\E(M,N)$ is finite-dimensional for every pair of objects, and
that the stable category
$\C=\underline\E$
is Hom-finite and 2-Calabi--Yau. In this quotient, morphisms factoring
through projective-injective objects are set to zero. Its suspension is denoted by $\Sigma$.
No Hom-finiteness assumption is imposed on $\E$ itself.

We fix a basic cluster-tilting object
$T=T_1\oplus\cdots\oplus T_n$.
In particular, $\Ext^1_\E(T,M)=0$ if and only if
$M\in\add T$. Let $\mathsf F\subseteq\{1,\ldots,n\}$ be the set of indices
of its projective-injective summands. These include every indecomposable
projective-injective object of $\E$. The variables $x_f$, $f\in \mathsf F$,
are the frozen variables.

Put
\[ R=\kk[x_1^{\pm1},\ldots,x_n^{\pm1}],\qquad K=\Frac R. \]
Let $\CC:\op{obj}\E\to R$ be isomorphism-invariant, with
\[ \CC(0)=1,\qquad \CC(M\oplus N)=\CC(M)\CC(N),\qquad \CC(T_i)=x_i. \]
We set
\[ \mathscr A_\CC=\kk[\,\CC(M):M\in\E\,][x_f^{-1}:f\in \mathsf F]. \]
We use $R$ for character expansions and $K$ for coordinate
substitutions; the transformations will preserve $\mathscr A_\CC$,
not necessarily the fixed Laurent ring $R$.
We work with a cluster character to which the full multiplication theorem
applies. We use its Euler-integrated form: for all $M,N$,
\begin{equation}\label{eq:full}
\begin{aligned}
 &\dim\Ext^1_\E(M,N)\,\CC(M)\CC(N)\\
 =&\int_{\PP\Ext^1_\E(M,N)}\CC(E_\eps)\,d\chi_c
       +\int_{\PP\Ext^1_\E(N,M)}\CC(E_\eta)\,d\chi_c.
\end{aligned}
\end{equation}
For $\eps\in\Ext^1_\E(M,N)$, we write $E_\eps$ for the middle
term of a conflation $0\to N\to E_\eps\to M\to0$. As part of the
multiplication formula, both integrands in \eqref{eq:full} are finite-valued
constructible functions.
Thus each integral is a finite sum of values weighted by Euler
characteristics. If the extension space is zero, both sides are zero.

The multiplication theorem supplies \eqref{eq:full} and the
constructibility used below. Palu proves
the triangulated version in \cite[Theorem~1.1]{Pal12}; the refined
Hom-finite Frobenius version is \cite[Theorem~3.6]{KPQ}. For the
Hom-infinite Grassmannian and positroid categories, we use the
preprojective formula and its Grassmannian homogenization, as explained
in Section~\ref{sec:geometric}. We state the abstract argument for any
character satisfying this formula, including its constructibility
assertion.

For each $M$, we choose conflations
\begin{equation}\label{eq:presentations} 0\longrightarrow\Omega M\longrightarrow P_M\longrightarrow M \longrightarrow0, \qquad 0\longrightarrow M\longrightarrow I_M\longrightarrow\Omega^{-1}M \longrightarrow0,\end{equation}
where $P_M,I_M$ are projective-injective. The \emph{projective deflation} and \emph{injective inflation} need not be minimal. We associate to them the character values
\begin{equation}\label{eq:normalized} \CC^-(M)=\frac{\CC(\Omega M)}{\CC(P_M)},\qquad \CC^+(M)=\frac{\CC(\Omega^{-1}M)}{\CC(I_M)}.\end{equation}
The division takes place in the character ring: $\CC(P_M)$ and
$\CC(I_M)$ are frozen monomials, so both expressions belong to $R$.
Changing the chosen presentation leaves these ratios unchanged, although
it may add projective-injective summands to the syzygy or cosyzygy.

\begin{theorem}[Syzygy and cosyzygy on cluster characters]\label{thm:main}
Assume that
\begin{equation}\label{eq:nonvanishing} \CC(\Omega T_i)\ne0\qquad(1\leq i\leq n).\end{equation}
The assignments
\[ \mathsf t_-(x_i)=\CC^-(T_i),\qquad \mathsf t_+(x_i)=\CC^+(T_i) \]
extend uniquely to inverse $\kk$-automorphisms of $K$. For every
object $M\in\E$,
\begin{equation}\label{eq:objectwise} \mathsf t_-\bigl(\CC(M)\bigr)=\frac{\CC(\Omega M)}{\CC(P_M)}, \qquad \mathsf t_+\bigl(\CC(M)\bigr)= \frac{\CC(\Omega^{-1}M)}{\CC(I_M)}.\end{equation}
Both maps restrict to inverse automorphisms of $\mathscr A_\CC$,
and at every frozen vertex,
\[ \mathsf t_-(x_f)=\mathsf t_+(x_f)=x_f^{-1}\qquad(f\in \mathsf F). \]

\end{theorem}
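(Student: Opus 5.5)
Our approach has two parts. We first show that $\CC^-$ is itself a cluster character in the sense of \eqref{eq:full}. We then use the uniqueness principle that a multiplicative function satisfying the full formula is determined by its values on $T$. This gives $\CC^-(M)=\mathsf t_-(\CC(M))$ for every $M$.

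\emph{Step 1 (well-definedness, multiplicativity, frozen values).} By Schanuel's lemma, two presentations give $\Omega M\oplus P'\cong\Omega'M\oplus P$. Multiplicativity of $\CC$ then makes $\CC^-(M)$ independent of the choice, and likewise for $\CC^+$. Direct sums of presentations show $\CC^-(M\oplus N)=\CC^-(M)\CC^-(N)$. For $T_f$ projective-injective we may take $0\to0\to T_f\to T_f\to0$, so $\CC^-(T_f)=x_f^{-1}$, and dually $\CC^+(T_f)=x_f^{-1}$.

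\emph{Step 2 (the full formula for $\CC^-$).} $\Omega$ induces isomorphisms $\Ext^1_\E(M,N)\cong\underline\Hom(\Omega M,N)\cong\Ext^1_\E(\Omega M,\Omega N)$, compatible with scalars, hence with projectivizations. The horseshoe lemma says the following: given $0\to N\to E_\eps\to M\to0$, there is a presentation $0\to\Omega N\to\Omega E\to\Omega M\to0$ with projective term $P_N\oplus P_M$, and its class is the image of $\eps$. Consequently $\Omega E_\eps\cong E'_{\Omega\eps}$ up to projective-injective summands accounted for in the denominator, and
\[\CC^-(E_\eps)=\frac{\CC(E'_{\Omega\eps})}{\CC(P_M)\CC(P_N)}.\]
Apply \eqref{eq:full} to the pair $(\Omega M,\Omega N)$ and divide by $\CC(P_M)\CC(P_N)$. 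This yields \eqref{eq:full} for $\CC^-$. Constructibility transports along the linear isomorphism of extension spaces.

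This is the step I expect to be the main obstacle. One must check that the horseshoe identification is compatible with the map $\eps\mapsto\Omega\eps$ pointwise, or at least up to a constructible bijection on $\PP\Ext^1$. One must also handle the fact that $\Omega E_\eps$ and the middle term of $\Omega\eps$ may differ by projective-injective summands. The Schanuel argument handles the latter. For the former I would choose functorial lifts of $P_M\to M$ and $P_N\to N$ and verify the class by a pullback/pushout diagram.

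\emph{Step 3 (uniqueness and automorphism).} We adapt the argument of \cite[Theorem~6.13]{ADS14} and \cite[Theorem~4.1]{KPQ}. Take any $M$ with $\Ext^1(T,M)\ne0$, and run induction on an index such as $\dim\Ext^1(T,M)$. For a fixed $T_i$ and $M$, the full formula expresses $\CC(T_i)\CC(M)$ through characters of middle terms that are ``closer'' to $\add T$. Applying this both to $\CC$ and to $\CC^-$ gives the same polynomial relations. Define $\mathsf t_-$ on $K$ by $x_i\mapsto\CC^-(T_i)$, which is nonzero by \eqref{eq:nonvanishing}. The recursion shows $\mathsf t_-\CC(M)=\CC^-(M)$, because $\mathsf t_-\circ\CC$ and $\CC^-$ satisfy the same formula and agree on $T$. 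Note that $\Omega T$ is also cluster-tilting, so $\mathsf t_-$ is a well-defined field homomorphism landing in $K$. Symmetrically, $\mathsf t_+$ satisfies $\mathsf t_+\CC(M)=\CC^+(M)$. Then
\[\mathsf t_+\mathsf t_-(x_i)=\mathsf t_+\bigl(\CC(\Omega T_i)/\CC(P_{T_i})\bigr)=\frac{\CC(\Omega^{-1}\Omega T_i)}{\CC(I)\,\mathsf t_+(\CC(P_{T_i}))}=x_i.\]
Here $\Omega^{-1}\Omega T_i\cong T_i\oplus(\text{proj})$, and the frozen reciprocals cancel the projective factors. Hence the two maps are inverse automorphisms of $K$. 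Uniqueness is clear because the $x_i$ generate $K$.

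Finally, \eqref{eq:objectwise} shows that $\mathsf t_\pm$ send generators $\CC(M)$ of $\mathscr A_\CC$ into $\mathscr A_\CC$, since they are quotients by frozen monomials. They send $x_f^{-1}$ to $x_f$. So both maps restrict to $\mathscr A_\CC$ and are mutually inverse there.
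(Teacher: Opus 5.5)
\textbf{Gap: Step 3 defines $\mathsf t_-$ on $K$ before knowing it exists.} Your Steps 1 and 2 match the paper: Schanuel gives independence of the presentation, and the horseshoe transfers the full formula to $\CC^-$. The problem is the sentence \emph{define $\mathsf t_-$ on $K$ by $x_i\mapsto\CC^-(T_i)$}.

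A field homomorphism $K\to K$ with these values exists only if the $q_i=\CC^-(T_i)$ are algebraically independent. That is exactly the birationality content of the theorem. Your justification, that $\Omega T$ is again cluster-tilting, does not give it. Nothing here assumes reachability, full rank, or pointedness of $\CC$. The function $\CC$ is only required to satisfy \eqref{eq:full} and $\CC(T_i)=x_i$, so nothing forces the characters of another cluster-tilting object to be algebraically independent.

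There is a second problem: you build $\mathsf t_+$ symmetrically. That needs $\CC^+(T_i)=\CC(\Omega^{-1}T_i)/\CC(I_{T_i})\neq0$, both to invert $x_i$ and to divide in the uniqueness induction. The hypothesis \eqref{eq:nonvanishing} concerns only $\Omega T_i$.

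\textbf{Fix: reorder the argument, as the paper does.} Use only the ring homomorphism $\phi:R\to K$, $x_i\mapsto q_i$, which exists because $q_i\neq0$. The uniqueness lemma needs nothing more, so $\phi(\CC(M))=\CC^-(M)$ for every $M$.
\begin{itemize}
\item Take $0\to T_i\to I_{T_i}\to C_i\to0$ and read it as a projective presentation of $C_i$ with syzygy $T_i$. This gives $\phi(\CC(C_i))=x_i/\CC(I_{T_i})$ and $\phi(\CC(I_{T_i}))=\CC(I_{T_i})^{-1}$.
\item Hence $\phi(r_i)=x_i$ for $r_i=\CC^+(T_i)$, so $\kk(q_1,\ldots,q_n)=K$.
\item By transcendence degree the $q_i$ are algebraically independent. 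So $\phi$ extends to an automorphism with inverse $x_i\mapsto r_i$, and $r_i\neq0$ follows rather than being assumed.
\item The second identity in \eqref{eq:objectwise} follows by applying $\phi\circ\CC=\CC^-$ to $\Omega^{-1}M$. No separate uniqueness argument for $\CC^+$ is needed.
\end{itemize}
Your computation $\mathsf t_+\mathsf t_-(x_i)=x_i$ is the same calculation run the other way. Read at the level of ring maps, it would also prove independence, but only once $\mathsf t_+$ is known to exist on $R$, which requires the nonvanishing you lack.

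\textbf{Minor point in the induction.} You should verify that the middle terms in both extension families strictly decrease $\dim\Ext^1_\E(T,-)$. For extensions $M\to\Sigma T_i$ this uses nondegeneracy of the 2-Calabi--Yau pairing.
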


For a projective-injective summand $T_f$, we use its identity map for
both presentations. Then $\Omega T_f=\Omega^{-1}T_f=0$ and
$P_{T_f}=I_{T_f}=T_f$. Since $\CC(0)=1$, both character ratios
are $1/x_f$.

\begin{corollary}\label{cor:seed}
Let $\Theta$ be a $\kk$-automorphism of $K$. If
\[ \Theta(x_i)=\frac{\CC(\Omega T_i)}{\CC(P_{T_i})} \qquad(1\leq i\leq n), \]
then, for every $M\in\E$,
\[ \Theta\bigl(\CC(M)\bigr)=\frac{\CC(\Omega M)}{\CC(P_M)},\qquad \Theta^{-1}\bigl(\CC(M)\bigr)= \frac{\CC(\Omega^{-1}M)}{\CC(I_M)}. \]
\end{corollary}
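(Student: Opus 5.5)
The corollary follows from Theorem~\ref{thm:main} by uniqueness. The plan is to show $\Theta=\mathsf t_-$ and then read off both formulas from \eqref{eq:objectwise}.

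First, verify the nonvanishing hypothesis \eqref{eq:nonvanishing}. Since $\Theta$ is an automorphism of $K$, each $\Theta(x_i)$ is nonzero. Hence $\CC(\Omega T_i)=\Theta(x_i)\,\CC(P_{T_i})\neq0$ for every $i$. Theorem~\ref{thm:main} therefore applies and gives the inverse automorphisms $\mathsf t_\pm$ of $K$, together with the objectwise formulas.

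Next, identify $\Theta$ with $\mathsf t_-$. Both are $\kk$-automorphisms of $K=\kk(x_1,\ldots,x_n)$ and they agree on the generators $x_i$, because $\mathsf t_-(x_i)=\CC^-(T_i)=\Theta(x_i)$. Here $\CC^-(T_i)$ does not depend on the chosen presentation, by the remark following \eqref{eq:normalized}. A $\kk$-algebra homomorphism out of the field $K$ is determined by its values on the $x_i$: it is determined on $R$, and then on fractions because it is multiplicative. So $\Theta=\mathsf t_-$, and consequently $\Theta^{-1}=\mathsf t_-^{-1}=\mathsf t_+$.

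Finally, applying \eqref{eq:objectwise} to an arbitrary $M\in\E$ gives $\Theta(\CC(M))=\CC(\Omega M)/\CC(P_M)$ and $\Theta^{-1}(\CC(M))=\CC(\Omega^{-1}M)/\CC(I_M)$, again independently of the chosen presentations. There is no real obstacle. The only points needing care are checking \eqref{eq:nonvanishing} from invertibility of $\Theta$, which is why the hypothesis is stated for an automorphism rather than an arbitrary endomorphism, and noting that the statement's $\CC(P_{T_i})$ refers to any choice of presentation, since the ratio is presentation-independent.
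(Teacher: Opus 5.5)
Your proposal is correct and follows the paper's proof. You deduce \eqref{eq:nonvanishing} from the injectivity of $\Theta$, apply Theorem~\ref{thm:main}, and identify $\Theta$ with $\mathsf t_-$ because they agree on the generators $x_i$ of $K$, which gives $\Theta^{-1}=\mathsf t_+$.
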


In Corollary~\ref{cor:seed}, the nonvanishing condition follows because
$\Theta$ is an automorphism. Neither statement requires the syzygies
of the summands of $T$ to be reachable or an exchange matrix to have
full rank.

\subsection{Uniqueness from the multiplication formula}\label{sec:uniqueness}

\begin{lemma}[Frobenius uniqueness]\label{lem:uniqueness}
Let $\Xi_1,\Xi_2:\op{obj}\E\to\mathbb F$ be isomorphism-invariant functions
with values in a field $\mathbb F$ of characteristic zero. Suppose that each
is multiplicative, takes the value $1$ at zero, and satisfies
\eqref{eq:full}, with finite-valued constructible integrands. If
$\Xi_1(T_i)=\Xi_2(T_i)\ne0$ for $1\leq i\leq n$, 
then $\Xi_1(M)=\Xi_2(M)$ for every $M\in\E$.
\end{lemma}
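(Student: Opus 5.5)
Our strategy is induction on the dimension of $\Ext^1_\E(T,M)$, using \eqref{eq:full} with $T$ in one argument to express $\Xi(M)$ through objects that are closer to $\add T$. Put $d(M)=\dim\Ext^1_\E(T,M)$. This is finite by hypothesis, and $d(M)=0$ exactly when $M\in\add T$. In that case $M\cong\bigoplus T_i^{a_i}$, and multiplicativity gives $\Xi_1(M)=\prod\Xi_1(T_i)^{a_i}=\Xi_2(M)$. The identity is also clear for $M=0$ and is compatible with direct sums. So it suffices to run an induction over $d$, and since $\Xi_j(M\oplus N)=\Xi_j(M)\Xi_j(N)$ we may work with arbitrary $M$ rather than indecomposables.

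\emph{Inductive step.} Suppose $d(M)>0$. Then there is an indecomposable summand $T_i$ with $\Ext^1_\E(T_i,M)\ne0$, and by the 2-Calabi--Yau property $\Ext^1_\E(M,T_i)\ne0$ as well. Apply \eqref{eq:full} to the pair $(T_i,M)$. This gives
\[
 \dim\Ext^1_\E(T_i,M)\,\Xi_j(T_i)\Xi_j(M)=\int_{\PP\Ext^1(T_i,M)}\Xi_j(E_\eps)\,d\chi_c+\int_{\PP\Ext^1(M,T_i)}\Xi_j(E_\eta)\,d\chi_c .
\]
The left coefficient is a nonzero integer, which is invertible because the characteristic is zero, and $\Xi_j(T_i)\neq 0$. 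Hence $\Xi_j(M)$ is determined by the values of $\Xi_j$ on the middle terms $E_\eps,E_\eta$. The fibres of these constructible functions are the same for $j=1,2$, since the stratification depends only on isomorphism classes of middle terms. So if $\Xi_1=\Xi_2$ on every middle term, the integrals agree stratum by stratum and $\Xi_1(M)=\Xi_2(M)$.

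\emph{Main obstacle.} The real work is a descent statement: each middle term $E$ must be "smaller" than $M$. The quantity $d$ alone need not decrease, so we will use a finer invariant. The natural choice is the index with respect to $T$, together with $d$, following the standard argument of \cite[Theorem~6.13]{ADS14} and \cite[Theorem~4.1]{KPQ}. Concretely, we will try the following.

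\begin{itemize}
\item Consider the conflations $0\to M\to E_\eps\to T_i\to0$ for nonzero $\eps$. Apply $\Hom_\E(T,-)$ and use $\Ext^1(T,T_i)=0$. This gives $\Ext^1(T,E_\eps)$ as a quotient of $\Ext^1(T,M)$ by the image of the connecting map, and that image is nonzero because it contains $\eps$. Hence $d(E_\eps)<d(M)$.
\item For the other term, $0\to T_i\to E_\eta\to M\to0$, the dimension $d$ can grow. Here we will instead induct on a lexicographic pair. One option is (total $\Hom$-dimension to $T$ in the stable category, $d$). Another, which is what we will try first, follows Palu's approach: choose a specific minimal approximation. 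Take an $\add T$-cover $T^1\to M$ and the triangle $\Omega_TM\to T^1\to M$. Only this particular extension is needed, and we use it to write $M$ recursively.
\end{itemize}

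If the two-sided integral obstructs a clean descent, we fall back on the index argument. A character satisfying the full formula is determined by its values on $T$ together with its values on $\Sigma T$-free objects. We therefore reduce to showing that the finitely many middle-term strata all have strictly smaller pairs, which closes the induction.
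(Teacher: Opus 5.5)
Your reduction to the middle terms, and your treatment of the family $0\to M\to E_\eps\to T_i\to0$, are correct and agree with the paper's argument. The gap is in the second family. Your claim that $d$ can grow for $0\to T_i\to E_\eta\to M\to0$ is false, and because of it you never close the induction. The lexicographic pairs, the single approximation triangle and the ``index argument'' are only named, not carried out. A single approximation triangle also cannot replace the integral over all of $\PP\Ext^1_\E(M,T_i)$ that appears in \eqref{eq:full}. Your final sentence, ``we reduce to showing that the middle-term strata all have strictly smaller pairs,'' restates what still has to be proved.

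The missing step is short, and it lets your plain induction on $d$ go through unchanged. Apply $\Hom_\E(T,-)$ to the second conflation. Since $\Ext^1_\E(T,T_i)=0$, you get an injection $\Ext^1_\E(T,E_\eta)\hookrightarrow\Ext^1_\E(T,M)$, so $d(E_\eta)\le d(M)$ already.

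For strictness, write $\bar X$ for the image in $\C=\underline\E$. Then the image of this injection is the kernel of $(\Sigma\eta)_*:\C(\bar T,\Sigma\bar M)\to\C(\bar T,\Sigma^2\bar T_i)$. Take $\eta\neq0$. The 2-Calabi--Yau pairing between $\C(\bar T_i,\Sigma\bar M)$ and $\C(\bar M,\Sigma\bar T_i)$ is nondegenerate, so there is $a:\bar T_i\to\Sigma\bar M$ with $(\Sigma\eta)\circ a\neq0$. Precomposing $a$ with the summand projection $\bar T\to\bar T_i$ shows $(\Sigma\eta)_*\neq0$. Hence $d(E_\eta)<d(M)$.

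So every middle term in both integrals has strictly smaller $d$. No finer invariant is needed.
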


\begin{proof}
We write $\bar M$ for the image of $M$ in $\C$, and induct on
\[ \ell(M)=\dim_\kk\Ext^1_\E(T,M) =\dim_\kk\C(\bar T,\Sigma\bar M). \]
If $\ell(M)=0$, then $M\in\add T$, so multiplicativity proves the
assertion, including when $M$ has projective-injective summands.

Suppose $\ell(M)>0$. We choose $i$ such that
$r_i=\dim\Ext^1_\E(T_i,M)>0$.
2-Calabi--Yau duality gives
$\dim\Ext^1_\E(M,T_i)=r_i$. We show that every middle term in either
projectivized extension space has smaller $\ell$.

First, let $0\ne a:\bar T_i\to\Sigma\bar M$ represent an extension
with triangle
\[ \bar M\longrightarrow\bar E_a\longrightarrow\bar T_i \xrightarrow{a}\Sigma\bar M. \]
Applying $\C(\bar T,-)$ and using
$\C(\bar T,\Sigma\bar T_i)=0$ gives
\[ \C(\bar T,\Sigma\bar E_a) \simeq\coker\bigl(\C(\bar T,\bar T_i) \xrightarrow{a_*}\C(\bar T,\Sigma\bar M)\bigr). \]
The map $a_*$ is nonzero, since evaluation on the summand projection
$\bar T\to\bar T_i$ detects $a$. Hence $\ell(E_a)<\ell(M)$.

Second, let $0\ne b:\bar M\to\Sigma\bar T_i$ represent a triangle
\[ \bar T_i\longrightarrow\bar E_b\longrightarrow\bar M \xrightarrow{b}\Sigma\bar T_i. \]
We suspend the triangle and apply $\C(\bar T,-)$. We obtain
\[ \C(\bar T,\Sigma\bar E_b) \simeq\ker\bigl(\C(\bar T,\Sigma\bar M) \xrightarrow{(\Sigma b)_*}\C(\bar T,\Sigma^2\bar T_i)\bigr). \]
By the nondegenerate 2-Calabi--Yau pairing, there is a morphism
$a:\bar T_i\to\Sigma\bar M$ such that $(\Sigma b)a\ne0$.
Precomposing $a$ with the summand projection $\bar T\to\bar T_i$
shows that $(\Sigma b)_*\ne0$. Hence $\ell(E_b)<\ell(M)$.

Applying the full multiplication formula to $(T_i,M)$, for either
$\Xi=\Xi_1$ or $\Xi=\Xi_2$, gives
\[ r_i \Xi(T_i)\Xi(M) =\int_{\PP\Ext^1(T_i,M)}\Xi(E_a)\,d\chi_c +\int_{\PP\Ext^1(M,T_i)}\Xi(E_b)\,d\chi_c. \]
By induction the two right-hand sides agree. We divide by the common
nonzero coefficient $r_i \Xi_1(T_i)=r_i \Xi_2(T_i)$ to complete the proof.
\end{proof}

Only the pairs $(T_i,M)$ are needed in this induction. Their extension
spaces may have any positive dimension.

We also need the corresponding statement in a triangulated category.
It is independent of a Frobenius realization. For a function on a
triangulated category, the \emph{full constructible multiplication formula}
means the triangulated analogue of \eqref{eq:full}, with
$\Ext^1_{\mathcal T}(X,Y)=\mathcal T(X,\Sigma Y)$ and middle terms of
triangles.

\begin{lemma}[Triangulated uniqueness]\label{lem:triangulated-uniqueness}
Let $\mathcal T$ be a Hom-finite, $\kk$-linear 2-Calabi--Yau
triangulated category with a basic cluster-tilting object $U=\bigoplus_{i=1}^mU_i$. Let
$\Xi_1,\Xi_2:\op{obj}\mathcal T\to\mathbb F$ be isomorphism-invariant
functions with values in a field $\mathbb F$ of characteristic zero.
Suppose that both functions are multiplicative, take the value $1$ at
zero, and satisfy the full constructible multiplication formula. If
$\Xi_1(\Sigma U_i)=\Xi_2(\Sigma U_i)\ne0$ for $1\leq i\leq m$,
then $\Xi_1(L)=\Xi_2(L)$ for every $L\in\mathcal T$.
\end{lemma}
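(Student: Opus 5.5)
We follow the proof of Lemma~\ref{lem:uniqueness}, with the cluster-tilting object $\Sigma U$ in place of $T$. Since $\Sigma$ is a triangle autoequivalence, $\Sigma U=\bigoplus_i\Sigma U_i$ is again a basic cluster-tilting object of $\mathcal T$. Its defining property is that $\mathcal T(\Sigma U,\Sigma L)=0$ if and only if $L\in\add\Sigma U$. We induct on
\[ \ell'(L)=\dim_\kk\mathcal T(\Sigma U,\Sigma L)=\dim_\kk\mathcal T(U,L). \]
This is finite by Hom-finiteness. When $\ell'(L)=0$, we have $L\in\add\Sigma U$, and multiplicativity together with $\Xi(0)=1$ and the hypothesis $\Xi_1(\Sigma U_i)=\Xi_2(\Sigma U_i)$ gives $\Xi_1(L)=\Xi_2(L)$.

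Now suppose $\ell'(L)>0$, and choose $i$ with $r_i=\dim\mathcal T(\Sigma U_i,\Sigma L)>0$. By 2-Calabi--Yau duality, $\dim\mathcal T(L,\Sigma^2U_i)=r_i$ as well. We then repeat the two cone computations from the Frobenius proof verbatim, with $\bar T_i$ replaced by $\Sigma U_i$ and $\bar T$ by $\Sigma U$.

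\emph{First case.} For $0\ne a:\Sigma U_i\to\Sigma L$ with triangle $L\to E_a\to\Sigma U_i\xrightarrow{a}\Sigma L$, the vanishing $\mathcal T(\Sigma U,\Sigma^2U_i)=0$ gives
\[ \mathcal T(\Sigma U,\Sigma E_a)\simeq\coker a_*. \]
The map $a_*$ is nonzero, since evaluating it on the summand projection detects $a$. Hence $\ell'(E_a)<\ell'(L)$.

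\emph{Second case.} For $0\ne b:L\to\Sigma^2U_i$ with triangle $\Sigma U_i\to E_b\to L\xrightarrow{b}\Sigma^2 U_i$, suspending and applying $\mathcal T(\Sigma U,-)$ gives
\[ \mathcal T(\Sigma U,\Sigma E_b)\simeq\ker\bigl((\Sigma b)_*\bigr). \]
This identification again uses the vanishing $\mathcal T(\Sigma U,\Sigma^2U_i)=0$, now on the left end of the exact sequence. The nondegenerate 2-CY pairing supplies $a$ with $(\Sigma b)a\ne0$, so $(\Sigma b)_*\ne0$ and $\ell'(E_b)<\ell'(L)$.

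With both cases in hand, we apply the full constructible multiplication formula to the pair $(\Sigma U_i,L)$ for $\Xi=\Xi_1$ and for $\Xi=\Xi_2$:
\[ r_i\,\Xi(\Sigma U_i)\Xi(L)=\int_{\PP\mathcal T(\Sigma U_i,\Sigma L)}\Xi(E_a)\,d\chi_c+\int_{\PP\mathcal T(L,\Sigma^2U_i)}\Xi(E_b)\,d\chi_c. \]
By the induction hypothesis, the integrands for $\Xi_1$ and $\Xi_2$ agree pointwise. Both are finite-valued constructible functions on the same projective spaces, so the integrals coincide. Dividing by the common nonzero scalar $r_i\Xi_1(\Sigma U_i)$ finishes the induction. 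This division uses characteristic zero so that $r_i\ne0$ in $\mathbb F$.

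The only real point to check is the vanishing $\mathcal T(\Sigma U,\Sigma^2 U_i)=\mathcal T(U,\Sigma U_i)=0$ used in both cone computations. It is exactly rigidity of $U$, so no obstacle is expected beyond bookkeeping. One could alternatively reduce to Lemma~\ref{lem:uniqueness}'s argument by applying it to the functions $L\mapsto\Xi_j(\Sigma L)$ with respect to $U$. However, that would require checking that these composites satisfy the formula, which holds since $\Sigma$ induces isomorphisms of the extension spaces compatible with middle terms. We prefer the direct induction, which avoids this check.
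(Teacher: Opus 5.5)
Your proof is correct and is essentially the paper's argument: the same induction on $\dim_\kk\mathcal T(U,L)$ with base case $L\in\add\Sigma U$, the same cokernel/kernel computations for the two cone families of the pair $(\Sigma U_i,L)$ using rigidity of $U$ and the 2-Calabi--Yau pairing, and the same division by $r_i\,\Xi(\Sigma U_i)$. The only difference is presentational. The paper works directly with $H=\mathcal T(U,-)$ and triangles $U_i\xrightarrow{a}L\to E_a\to\Sigma U_i$, whereas you apply $\Sigma$ throughout and use $\mathcal T(\Sigma U,\Sigma-)\simeq\mathcal T(U,-)$.
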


\begin{proof}
Put $H=\mathcal T(U,-)$ and induct on
$\ell(L)=\dim_\kk H(L)$. If $\ell(L)=0$, then
$L\in\add\Sigma U$. Otherwise choose $i$ with
$\mathcal T(U_i,L)\ne0$. For the two extension families in the
multiplication formula for $(L,\Sigma U_i)$, take triangles
\[
U_i\xrightarrow{a}L\longrightarrow E_a\longrightarrow\Sigma U_i,
\qquad
\Sigma U_i\longrightarrow E_b\longrightarrow L\xrightarrow{b}\Sigma^2U_i.
\]
Applying $H$ gives
\[
H(E_a)=\coker H(a),\qquad H(E_b)=\ker H(b).
\]
For $a\ne0$, the summand projection $U\to U_i$ shows $H(a)\ne0$.
For $b\ne0$, the 2-Calabi--Yau pairing gives $a:U_i\to L$ with
$ba\ne0$, hence $H(b)\ne0$. Thus both middle terms have smaller
$\ell$. The two extension spaces have the same positive dimension
$r_i=\dim_\kk\mathcal T(U_i,L)$ by 2-Calabi--Yau duality. The induction
hypothesis identifies both integrals, and division by the common nonzero
coefficient $r_i\Xi_1(\Sigma U_i)=r_i\Xi_2(\Sigma U_i)$ completes the
proof.
\end{proof}

\begin{theorem}[Suspension for cluster characters]
\label{thm:triangulated-transport}
Let $\mathcal T$ and $U$ be as in Lemma~\ref{lem:triangulated-uniqueness},
and let $\CC_U(-;x)$ be a cluster character with values in
$\mathbb K[x_1^{\pm1},\ldots,x_m^{\pm1}]$, where $\mathbb K$ has
characteristic zero, satisfying the full constructible multiplication
formula and normalized by $\CC_U(\Sigma U_i;x)=x_i$. Let $\mathbb F$
be an extension field of $\mathbb K$, let
$\xi_1,\ldots,\xi_m\in\mathbb F^\times$, and write $\CC_U(L;\xi)$ for
evaluation at $x_i=\xi_i$. If a $\mathbb K$-algebra homomorphism
$\phi:\mathbb K[x_1^{\pm1},\ldots,x_m^{\pm1}]\to\mathbb F$ satisfies
\[ \phi(x_i)=\CC_U(\Sigma^2U_i;\xi) \qquad(1\leq i\leq m), \]
then, for every $L\in\mathcal T$,
\[ \phi\bigl(\CC_U(L;x)\bigr)=\CC_U(\Sigma L;\xi). \]
\end{theorem}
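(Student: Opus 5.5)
We apply Lemma~\ref{lem:triangulated-uniqueness} to two functions on $\mathcal T$ with values in $\mathbb F$:
\[ \Xi_1(L)=\phi\bigl(\CC_U(L;x)\bigr),\qquad \Xi_2(L)=\CC_U(\Sigma L;\xi). \]
The plan is to verify the hypotheses of that lemma for both functions and then compare their values on $\Sigma U_i$.

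\emph{Structural hypotheses.} Both functions are isomorphism-invariant, multiplicative, and take the value $1$ at zero. For $\Xi_1$ this holds because $\CC_U$ has these properties and $\phi$ is a ring homomorphism. For $\Xi_2$ it holds because $\Sigma$ is an additive autoequivalence, and evaluation at $x=\xi$ is a ring homomorphism. Evaluation is well defined since every $\xi_i$ is nonzero.

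\emph{Matching on $\Sigma U_i$.} By the normalization,
\[ \Xi_1(\Sigma U_i)=\phi(x_i)=\CC_U(\Sigma^2U_i;\xi)=\Xi_2(\Sigma U_i). \]
We still need these common values to be nonzero. We get this from $\Xi_1$, since $\phi(x_i)\phi(x_i^{-1})=1$ forces $\phi(x_i)\neq0$. In particular $\CC_U(\Sigma^2U_i;\xi)\ne0$ comes for free.

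\emph{Multiplication formula for $\Xi_1$.} We apply the $\mathbb K$-linear map $\phi$ to the formula for $\CC_U$. Each integral is a finite sum $\sum_k\chi_c(S_k)\,c_k$ over a constructible stratification, so $\phi$ passes through it. The integrand $\phi\circ\CC_U(E_\varepsilon)$ is again constructible and finite-valued, with the same strata.

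\emph{Multiplication formula for $\Xi_2$.} This is the step requiring care. Fix $X,Y$. Since $\Sigma$ is a triangle autoequivalence, it induces linear isomorphisms $\mathcal T(X,\Sigma Y)\cong\mathcal T(\Sigma X,\Sigma^2Y)$ (with the sign conventions of suspended triangles), hence isomorphisms of the projective spaces. A triangle $Y\to E\to X\xrightarrow{\varepsilon}\Sigma Y$ suspends, up to sign, to a triangle $\Sigma Y\to\Sigma E\to\Sigma X\xrightarrow{\pm\Sigma\varepsilon}\Sigma^2Y$. Sign changes on morphisms do not change the isomorphism class of the middle term, and they give the same point of the projectivization. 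So the multiplication formula for $\CC_U$ applied to the pair $(\Sigma X,\Sigma Y)$ pulls back along these algebraic isomorphisms, and it becomes the formula for $\Xi_2$ applied to $(X,Y)$. Two facts make the pullback legitimate:
\begin{itemize}
\item isomorphisms of projective spaces preserve constructible stratifications and compactly supported Euler characteristics;
\item the dimension coefficient $\dim\mathcal T(X,\Sigma Y)=\dim\mathcal T(\Sigma X,\Sigma^2Y)$ is unchanged.
\end{itemize}
Evaluating at $\xi$ then passes through the finite sums, exactly as for $\phi$.

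\emph{Conclusion.} Lemma~\ref{lem:triangulated-uniqueness} now gives $\Xi_1=\Xi_2$, which is the claimed identity $\phi\bigl(\CC_U(L;x)\bigr)=\CC_U(\Sigma L;\xi)$ for every $L\in\mathcal T$.

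The main obstacle is the verification for $\Xi_2$. One must check that the identification of middle terms under $\Sigma$ respects the constructible structure assumed in the multiplication formula, which amounts to checking that $\Sigma$ acts algebraically on the extension spaces. Since $\Sigma$ acts linearly there, this should be routine.
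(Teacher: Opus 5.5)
Your proposal is correct and follows the paper's proof. Both sides are shown to be multiplicative functions satisfying the full constructible multiplication formula, the first by pushing $\phi$ through the finite constructible sums and the second by suspending triangles; they agree with a common nonzero value on each $\Sigma U_i$ (your observation $\phi(x_i)\phi(x_i^{-1})=1$ makes this nonvanishing explicit), and Lemma~\ref{lem:triangulated-uniqueness} concludes.
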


\begin{proof}
The two sides define multiplicative functions of $L$ satisfying the full
constructible multiplication formula: apply $\phi$ to its finite
constructible sums on the left, and suspend the triangles on the right.
They agree with the same nonzero value on every $\Sigma U_i$, so
Lemma~\ref{lem:triangulated-uniqueness} applies.
\end{proof}

Birationality is not part of Theorem~\ref{thm:triangulated-transport};
in Section~\ref{sec:AR} it follows from the projective and injective
$F$-polynomial identities.

\subsection{Characters of syzygies and constructible integration}\label{sec:full-syzygy}

\begin{lemma}\label{lem:schange}
The two expressions in \eqref{eq:normalized} are independent of the
chosen conflations. They are multiplicative, take the value $1$ at
zero, and satisfy
\[ \CC^-(P)=\CC^+(P)=\CC(P)^{-1} \]
for every projective-injective object $P$.
\end{lemma}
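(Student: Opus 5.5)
We prove the syzygy statements and obtain the cosyzygy ones by the dual argument. The approach is a Schanuel-type comparison of two presentations, followed by direct sums of presentations for multiplicativity, and then the special presentations available for projective-injective objects.

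\emph{Independence.} Suppose $0\to\Omega M\to P_M\to M\to0$ and $0\to\Omega'M\to P'_M\to M\to0$ are two conflations with projective-injective middle terms. Take their pullback over $M$. This gives a conflation $0\to\Omega M\to Q\to P'_M\to0$. It splits because $P'_M$ is projective, so $Q\simeq\Omega M\oplus P'_M$. Symmetrically, $Q\simeq\Omega'M\oplus P_M$. Hence
\[
\Omega M\oplus P'_M\simeq\Omega'M\oplus P_M .
\]
Isomorphism invariance and multiplicativity of $\CC$ then give
\[
\CC(\Omega M)\CC(P'_M)=\CC(\Omega'M)\CC(P_M).
\]
The factors $\CC(P_M)$ and $\CC(P'_M)$ are units of $R$. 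Each projective-injective object is a direct sum of indecomposable projective-injectives, and each of these is some $T_f$ with $f\in\mathsf F$. So $\CC(P)$ is a frozen monomial $\prod_f x_f^{m_f}$, and dividing is legitimate. This shows $\CC^-(M)$ is independent of the presentation. The same pushout argument applied to injective inflations handles $\CC^+$.

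\emph{Multiplicativity and normalization.} The direct sum of two conflations is a conflation, since exact structures are closed under direct sums. So
\[
0\to\Omega M\oplus\Omega N\to P_M\oplus P_N\to M\oplus N\to0
\]
is an admissible presentation of $M\oplus N$. By independence we may use it to compute $\CC^-(M\oplus N)$, and it gives $\CC^-(M\oplus N)=\CC^-(M)\CC^-(N)$. For $M=0$, use the zero presentation: $\CC(0)/\CC(0)=1$.

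\emph{Projective-injective objects.} For a projective-injective $P$, use the presentation $0\to0\to P\xrightarrow{\mathrm{id}}P\to0$. This gives $\CC^-(P)=\CC(0)/\CC(P)=\CC(P)^{-1}$. Dually, $0\to P\to P\to0\to0$ gives $\CC^+(P)=\CC(P)^{-1}$.

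The only point needing care is the Schanuel step. One must check that the pullback square really yields conflations in $\E$, which follows from the standard axioms that admissible deflations are stable under pullback. One must also confirm that every projective-injective object is a frozen monomial under $\CC$, which holds because $\mathsf F$ contains all indecomposable projective-injectives and $\E$ is Krull--Schmidt.
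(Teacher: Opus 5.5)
Your proposal is correct and follows the paper's proof: Schanuel's isomorphism $\Omega M\oplus P'_M\simeq\Omega' M\oplus P_M$ obtained from the pullback, direct sums of conflations for multiplicativity, and the presentation $P\xrightarrow{1}P$ for projective-injective objects. You also verify explicitly that $\CC(P)$ is a frozen monomial, hence a unit of $R$; the paper notes this just before the lemma.
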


\begin{proof}
We compare two projective deflations $P_1\to M$, $P_2\to M$, with
kernels $K_1,K_2$. By taking their pullback we obtain Schanuel's
isomorphism $K_1\oplus P_2\simeq K_2\oplus P_1$.
We apply multiplicativity and divide by $\CC(P_1)\CC(P_2)$ to prove
independence for $\CC^-$. The dual argument proves independence for
$\CC^+$. We prove multiplicativity by taking direct sums of
conflations, and the last assertion by using $P\xrightarrow{1}P$.
\end{proof}

We now extend the horseshoe calculation of
\cite[proof of Theorem~5.27]{Pre} to the full multiplication formula.

\begin{lemma}\label{lem:horseshoe}
The functions $\CC^-$ and $\CC^+$ satisfy the full multiplication
formula \eqref{eq:full}, with finite-valued constructible integrands.
\end{lemma}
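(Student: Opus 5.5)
We treat $\CC^-$ in detail; the statement for $\CC^+$ is dual. The plan is to show that syzygy induces isomorphisms of extension spaces compatible with middle terms, and then to transport the formula \eqref{eq:full} for $\CC$ applied to $(\Omega M,\Omega N)$. Fix presentations $0\to\Omega M\to P_M\to M\to0$ and $0\to\Omega N\to P_N\to N\to0$. Since $\E$ is Frobenius and $\C$ is triangulated with $\Sigma^{-1}=\Omega$, we have natural isomorphisms
\[ \Ext^1_\E(M,N)\simeq\C(\bar M,\Sigma\bar N)\simeq\C(\Omega\bar M,\bar N)\simeq\Ext^1_\E(\Omega M,\Omega N). \]
So the two projectivized extension spaces for $(M,N)$ and for $(\Omega M,\Omega N)$ are identified linearly, and hence isomorphically as varieties. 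In particular $\dim\Ext^1(M,N)=\dim\Ext^1(\Omega M,\Omega N)$, and the same holds with $M$ and $N$ swapped.

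The key step is the horseshoe lemma. Given $\eps\in\Ext^1_\E(M,N)$ with conflation $0\to N\to E_\eps\to M\to0$, the horseshoe construction produces a projective deflation $P_N\oplus P_M\to E_\eps$. Its kernel $\Omega E_\eps$ sits in a conflation $0\to\Omega N\to\Omega E_\eps\to\Omega M\to0$, whose class I claim is the image $\Omega\eps$ of $\eps$ under the isomorphism above. This should follow from the snake lemma together with the standard fact that, in the stable category, the triangle of kernels is the rotated triangle. Consequently
\[ \CC^-(E_\eps)=\frac{\CC(\Omega E_\eps)}{\CC(P_M)\CC(P_N)}=\frac{\CC(E'_{\Omega\eps})}{\CC(P_M)\CC(P_N)}, \]
where $E'$ denotes middle terms for the pair $(\Omega M,\Omega N)$; the first equality uses Lemma~\ref{lem:schange} to justify the choice of presentation. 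The same construction applies to $\eta\in\Ext^1(N,M)$.

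Now write \eqref{eq:full} for $\CC$ at the pair $(\Omega M,\Omega N)$, and divide by the constant $\CC(P_M)\CC(P_N)$. The left side becomes $\dim\Ext^1(M,N)\,\CC^-(M)\CC^-(N)$. Each integrand on the right, pulled back along the linear isomorphism of projective spaces, becomes $\CC^-(E_\eps)$. Constructibility and finite-valuedness transfer because the identification is an isomorphism of varieties and the integrand is only rescaled by a nonzero constant. Euler characteristics $\chi_c$ are preserved under this isomorphism. This gives \eqref{eq:full} for $\CC^-$.

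The main obstacle is the middle step: the identification must be $\Omega E_\eps\simeq E'_{\Omega\eps}$ up to projective-injective summands. Here it matters that we use the specific horseshoe presentation, so that no extra summands appear, or else that any extra summands are absorbed by Lemma~\ref{lem:schange}. We also need $\eps\mapsto\Omega\eps$ to be the linear isomorphism fixed above, rather than a pointwise correspondence, so that the pullback of the integral is legitimate. I would verify this by checking that the connecting map $\Omega M\to N$ induced by the horseshoe diagram represents $\eps$ under the standard dimension shift $\Ext^1(M,N)\simeq\underline{\Hom}(\Omega M,N)$, and by the analogous check for $\Omega N$. Isomorphism classes of middle terms depend only on the extension class, so the dual argument with injective inflations and cosyzygies gives the statement for $\CC^+$.
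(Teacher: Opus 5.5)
Your proposal is correct and matches the paper's proof step for step: you identify $\Ext^1_\E(M,N)\simeq\Ext^1_\E(\Omega M,\Omega N)$ through the stable category, use the horseshoe deflation $P_N\oplus P_M\to E_\eps$ so that its kernel is exactly the middle term of the syzygy extension (projective-injective summands kept), and pull back \eqref{eq:full} for $(\Omega M,\Omega N)$ after dividing by the fixed scalar $\CC(P_M)\CC(P_N)$. One small addition: the horseshoe class agrees with the stable-category image only up to a sign from the suspension convention, which is harmless because $\eps$ and $-\eps$ have isomorphic middle terms and give the same point of the projectivization.
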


\begin{proof}
We fix projective deflations $P_M\to M$ and $P_N\to N$. Passing to
the stable category $\C=\underline\E$, the syzygy functor is inverse to
suspension. Hence the autoequivalence $\Sigma^{-1}$, together with the
identifications $\overline{\Omega M}\simeq\Sigma^{-1}\bar M$ and
$\overline{\Omega N}\simeq\Sigma^{-1}\bar N$, gives a linear
isomorphism
\begin{equation}\label{eq:ext-isomorphism}
s_{M,N}:\Ext^1_\E(M,N) \xrightarrow{\sim}
\Ext^1_\E(\Omega M,\Omega N).
\end{equation}
The horseshoe construction below realizes this stable-category
isomorphism on extension classes. Thus \eqref{eq:ext-isomorphism} is
fixed without making algebraic choices of extension representatives in
families.

For an extension $0\to N\to E\to M\to0$, the horseshoe construction
gives a diagram of conflations
\[
\begin{tikzcd}[column sep=large,row sep=large]
0\arrow[r]&\Omega N\arrow[r]\arrow[d]&K_E\arrow[r]\arrow[d]&\Omega M\arrow[r]\arrow[d]&0\\
0\arrow[r]&P_N\arrow[r]\arrow[d]&P_N\oplus P_M\arrow[r]\arrow[d]&P_M\arrow[r]\arrow[d]&0\\
0\arrow[r]&N\arrow[r]&E\arrow[r]&M\arrow[r]&0.
\end{tikzcd}
\]
The upper extension represents $s_{M,N}(\eps)$, up to the immaterial
sign arising from the convention for suspension. The middle column is
a conflation with projective-injective middle term. Consequently,
\begin{equation}\label{eq:middle-value} \CC(K_E)=\CC(P_M)\CC(P_N)\CC^-(E).\end{equation}
The same identity holds for extensions in the reverse direction, with
the same denominator $\CC(P_M)\CC(P_N)$.

We apply \eqref{eq:full} to $(\Omega M,\Omega N)$, pull back the two
integrals along the projectivizations of $s_{M,N}$ and $s_{N,M}$,
and use \eqref{eq:middle-value}. We then divide by
$\CC(P_M)\CC(P_N)$ to obtain \eqref{eq:full} for $\CC^-$.

This argument also proves the required constructibility. The function
on the original extension space is the pullback of a finite-valued
constructible function under a linear isomorphism, divided by a fixed
nonzero scalar in $K$. No algebraic choice of minimal projective covers
of varying middle terms is required. The dual horseshoe construction
proves the assertion for $\CC^+$.
\end{proof}

We keep any projective-injective summands of $K_E$ in
\eqref{eq:middle-value}. Such summands can occur even when the end
presentations are minimal, and their characters contribute frozen
factors to the formula.

\subsection{Birationality and the inverse}\label{sec:birational}

\begin{proof}[Proof of Theorem~\ref{thm:main}]
Put $q_i=\CC^-(T_i)$. By
\eqref{eq:nonvanishing}, the assignments $x_i\mapsto q_i$ define a
homomorphism $\phi:R\to K$.
We will prove that $\phi$ extends to a field automorphism.

The functions $\phi\circ\CC$ and $\CC^-$ satisfy the full
multiplication formula. For the first, apply $\phi$ to the finite
constructible sums for $\CC$; its new fibres are finite unions of the
old strata. For the second, use Lemma~\ref{lem:horseshoe}. They agree on
all $T_i$, with nonzero common values. Lemma~\ref{lem:uniqueness} yields
\begin{equation}\label{eq:phi-all} \phi(\CC(M))=\CC^-(M)\qquad(M\in\E).\end{equation}

We choose $0\to T_i\to I_{T_i}\to C_i\to0$, with $I_{T_i}$ projective-injective,
and put $r_i=\frac{\CC(C_i)}{\CC(I_{T_i})}\in R$.
The same conflation gives a projective deflation onto $C_i$, with kernel
$T_i$. Thus \eqref{eq:phi-all} and Lemma~\ref{lem:schange} give
\[ \phi(\CC(C_i))=\frac{x_i}{\CC(I_{T_i})},\qquad \phi(\CC(I_{T_i}))=\CC(I_{T_i})^{-1},\qquad \phi(r_i)=x_i. \]
In particular, $\kk(q_1,\ldots,q_n)$ contains every $x_i$. It is
therefore $K$, and the $q_i$ are algebraically independent. Hence
$\phi$ extends to an automorphism $\mathsf t_-$ of $K$, whose
inverse sends $x_i$ to $r_i=\CC^+(T_i)$.

For an arbitrary second conflation in \eqref{eq:presentations}, applying
\eqref{eq:phi-all} once more gives
\[ \mathsf t_-\left(\frac{\CC(\Omega^{-1}M)}{\CC(I_M)}\right)=\CC(M). \]
This proves the second identity in \eqref{eq:objectwise}; the first is
\eqref{eq:phi-all}. Both maps send $x_f$ to $x_f^{-1}$ by
Lemma~\ref{lem:schange}. Finally, every expression on the right of
\eqref{eq:objectwise} belongs to $\mathscr A_\CC$. Thus both maps
preserve this algebra and are inverse automorphisms of it.
\end{proof}

\begin{proof}[Proof of Corollary~\ref{cor:seed}]
The specified values are nonzero because $\Theta$ is a field automorphism. Theorem~\ref{thm:main} applies, and the two field
automorphisms agree on a transcendence basis.
\end{proof}

\begin{remark}\label{rem:scope}
The automorphisms preserve all algebraic relations among character values,
with each frozen variable replaced by its reciprocal. Their construction takes place in
$\mathscr A_\CC$; it uses neither a basis theorem nor an identification
with an upper cluster algebra. The syzygy functor acts on the stable
category, and the projective factors make the character formulas
independent of its chosen lifts to $\E$.
\end{remark}
We choose projective deflations
successively and write $M_j=\Omega^j M$, with $M_0=M$, and
$0\to M_{j+1}\to P_j\to M_j\to0$. Then, for $r\geq1$,
\[ \mathsf t_-^r\bigl(\CC(M)\bigr) =\CC(M_r)\prod_{j=0}^{r-1}\CC(P_j)^{(-1)^{r-j}}. \]
We prove this by induction, using
$\mathsf t_-(\CC(P_j))=\CC(P_j)^{-1}$ at each step.
The expression is independent of the chosen conflations because it
equals the left-hand side. Periodicity in the stable category need not
give periodicity of the character transformation: the projective factors
may not cancel.

\subsection{The Markov quiver: nonreachability}\label{subsec:markov-setup}
Consider the Markov quiver 
\begin{center}
\begin{tikzpicture}[>=Latex, every node/.style={circle, draw, inner sep=1.1pt, minimum size=11pt},
  lbl/.style={draw=none, circle=none, fill=none, inner sep=1pt}]
  \node (1) at (90:1.7) {$1$};
  \node (2) at (210:1.9) {$2$};
  \node (3) at (330:1.9) {$3$};
  \draw[->, bend left=11] (1) to node[lbl, midway, left] {$a_1$} (2);
  \draw[->, bend left=23] (1) to node[lbl, midway, right] {$a_2$} (2);
  \draw[->, bend left=11] (2) to node[lbl, midway, below] {$b_1$} (3);
  \draw[->, bend left=23] (2) to node[lbl, midway, above] {$b_2$} (3);
  \draw[->, bend left=11] (3) to node[lbl, midway, right] {$c_1$} (1);
  \draw[->, bend left=23] (3) to node[lbl, midway, left] {$c_2$} (1);
\end{tikzpicture}
\end{center}
We multiply paths in traversal order and use the potential
\begin{equation}\label{eq:markov-potential} W=a_1b_1c_1+a_2b_2c_2-a_1b_2c_1a_2b_1c_2.\end{equation}
This is the Labardini potential in \cite[Example~4.3]{Pla13}, with path
multiplication reversed. Its completed Jacobian algebra $J$ is finite-dimensional and the potential
is nondegenerate. We use $W$ throughout the example.

The exchange matrix and the projective and injective dimensions are
\[ B=\begin{pmatrix}0&2&-2\\-2&0&2\\2&-2&0\end{pmatrix},\qquad P_i\simeq I_i,\qquad \dv P_i=(4,4,4),\qquad \dim_{\mathbb C}J=36. \]

We work in the \emph{generalized cluster category} $\mathcal C_{Q,W}$.
Write $T$ for its standard cluster-tilting object in the convention
$\CC_T(\Sigma T_i)=x_i$, and put $\Gamma=\Sigma T$.
The suspension transport used in this example is
Theorem~\ref{thm:triangulated-transport}; Section~\ref{sec:AR}
computes the resulting coordinate transformation.
We recall the obstruction to reaching $\Sigma\Gamma$ from $\Gamma$.
Under a mutation of the reference cluster-tilting object, the index
coordinates $g$ of a fixed rigid object satisfy, up to interchanging the
two neighbours of the mutated vertex $k$,
\[ g'_k=-g_k,\qquad g'_j=g_j+2[g_k]_+,\qquad g'_\ell=g_\ell-2[-g_k]_+. \]
Here $[a]_+=\max(a,0)$ is the \emph{positive part}; for vectors we
apply it coordinatewise. Every mutation of the Markov quiver reverses
its orientation, so the formula applies at each step and preserves
$g_1+g_2+g_3$.
The index of $\Sigma\Gamma$ relative to $\Gamma$ has sum $-3$;
relative to itself it has sum $3$. Hence the two cluster-tilting objects
cannot be joined by mutations \cite[Example~4.3]{Pla13}.
In Section~\ref{subsec:markov-characters} we compute the character
transformation between them, and in Section~\ref{sec:tropical} we compute
its tropicalization.

\section{Auslander--Reiten translation and \texorpdfstring{$F$}{F}-polynomials}\label{sec:AR}

We regard dimension vectors and weights as row vectors and set
\[ b_{ij}=\#\{i\to j\}-\#\{j\to i\},\qquad b_i=(b_{i1},\ldots,b_{im}). \]
Let $J$ be the finite-dimensional \emph{completed Jacobian algebra}
of a quiver with $Q_0=\{1,\ldots,m\}$ and no loops or oriented
two-cycles. The potential may be a formal series. We use right
$J$-modules and multiply paths in traversal order. We write
\[ P_i=e_iJ,\qquad I_i=\Hom_{\mathbb C}(Je_i,\mathbb C) \]
for the indecomposable projectives and injectives, where $e_i$ is the
vertex idempotent. We denote the corresponding simple module by $S_i$.
We reserve $\mathbf e_i$ for the standard basis
vector of $\ZZ^{Q_0}$ and $\mathrm{Id}_m$ for the identity matrix. For $\alpha\in\ZZ_{\geq0}^{Q_0}$, put
\[ P(\alpha)=\bigoplus_i P_i^{\oplus\alpha_i},\qquad I(\alpha)=\bigoplus_i I_i^{\oplus\alpha_i}. \]
For a weight $\delta=(\delta_i)$, we write
$\delta(\gamma)=\sum_i\delta_i\gamma_i$ and $\delta(i)=\delta_i$.

A \emph{decorated representation} is a pair $M=(M^{\mathrm{ord}},V_M)$,
where $M^{\mathrm{ord}}$ is a finite-dimensional $J$-module and
$V_M$ is a $Q_0$-graded vector space. We set
$\dv M=\dv M^{\mathrm{ord}}$ and $v_M=\dv V_M$.
Let
\[ P(\beta_1)\longrightarrow P(\beta_0)\longrightarrow M^{\mathrm{ord}}\longrightarrow0,
\qquad 0\longrightarrow M^{\mathrm{ord}}\longrightarrow I(\check\beta_0)\longrightarrow I(\check\beta_1) \]
be minimal projective and injective presentations. The \emph{weight} and
\emph{coweight} are
\[ \delta_M=\beta_0-\beta_1-v_M,\qquad
\check\delta_M=\check\beta_0-\check\beta_1-v_M. \]
Thus negative decoration enters both with the same sign.

We take $y_1,\ldots,y_m$ to be algebraically independent indeterminates.
The \emph{$F$-polynomial} and the \emph{dual $F$-polynomial} count
submodules and quotients, respectively \cite[Section~3]{DWZ10}:
\[ F_M(y)=\sum_{\gamma}\chi_c(\Gr_{\gamma}M^{\mathrm{ord}})y^{\gamma},\qquad \check F_M(y)=\sum_{\gamma}\chi_c(\Gr^{\gamma}M^{\mathrm{ord}})y^{\gamma}. \]
Here $\Gr_{\gamma}M^{\mathrm{ord}}$ parametrizes submodules of dimension
vector $\gamma$, and $\Gr^{\gamma}M^{\mathrm{ord}}$ parametrizes
quotients of that dimension.
Both polynomials depend only on the ordinary part of $M$, and the
submodule--quotient correspondence gives
\[ \check F_M(y^{-1})=y^{-\dv M}F_M(y),\qquad y^{-1}=(y_1^{-1},\ldots,y_m^{-1}). \]
For the \emph{extended Auslander--Reiten translation}, write
$M^{\mathrm{ord}}=M^{\mathrm{np}}\oplus P(\pi_M)$ with
$M^{\mathrm{np}}$ having no projective summand, and set
\[ \tau M=\bigl(\tau M^{\mathrm{np}}\oplus I(v_M),\pi_M\bigr). \]
Here $\tau M^{\mathrm{np}}$ is the kernel of the Nakayama transform of a
minimal projective presentation. Writing $(0,\mathbf e_i)$ for the
negative simple, we have $\tau P_i=(0,\mathbf e_i)$ and
$\tau(0,\mathbf e_i)=I_i$.

\Needspace{10\baselineskip}
\begin{theorem}[Auslander--Reiten translation]
\label{thm:AR}
Put
\[ F_I(z)^c=\prod_jF_{I_j}(z)^{c_j},\qquad \psi_i(z)=z_i^{-1}\prod_jF_{I_j}(z)^{b_{ij}}. \]
Let $z_1,\ldots,z_m$ be algebraically independent variables.
For every decorated representation $M$,
\begin{equation}\label{eq:AR} F_M(\psi(z))=z^{-\dv M}F_I(z)^{\check\delta_M}F_{\tau M}(z)\end{equation}
as an equality in $\QQ(z_1,\ldots,z_m)$.
\end{theorem}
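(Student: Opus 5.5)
We will apply Theorem~\ref{thm:triangulated-transport} in the generalized cluster category $\mathcal C_{Q,W}$ with $U=\Gamma=\Sigma T$ and cluster character $\CC_T$. Plamondon's formula for this character gives, for an object $L$ corresponding to a decorated representation $M$ (via $\mathcal C(T,\Sigma L)$ with decoration from $\add\Sigma T$ summands),
\[ \CC_T(L;x)=x^{-\,g_M}\,F_M(\hat y),\qquad \hat y_j=\prod_i x_i^{b_{ij}}, \]
where $g_M$ is the index, which in our conventions is identified with $\delta_M$ (or $-\check\delta$-type data, up to sign conventions fixed by $\CC_T(\Sigma T_i)=x_i$). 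Suspension $\Sigma$ in $\mathcal C_{Q,W}$ corresponds on decorated representations to the extended AR translation $\tau$ (up to the standard shift identification $\Sigma^2\simeq\tau^{-1}$-twisted by 2-CY, i.e.\ $\Sigma L$ corresponds to $\tau M$ after accounting that $\Sigma\simeq\tau$ in a 2-CY category). So the transported identity reads $\phi(\CC_T(L))=\CC_T(\Sigma L;\xi)$, and the objects $\Sigma^2T_i$ correspond to $\tau(0,\mathbf e_i)=I_i$, whose characters are $x^{-g_{I_i}}F_{I_i}(\hat y)$.

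The key steps, in order, are as follows. First, I fix the dictionary: $\CC_T(\Sigma L)$ is the character of $\tau M$, and the index of $I_i$ and more generally the index of $\tau M$ is expressed via $\check\delta_M$; for injectives, $g_{I_i}$ equals minus the coweight-type vector, giving $\CC_T(\Sigma^2T_i)=x^{\mathbf e_i\text{-shift}}F_{I_i}(\hat y)$ explicitly. Second, I choose the evaluation $\xi$ to decouple $F$-variables: to get identities in independent variables $z$ even when $B$ is singular, I enlarge to principal (or framed) coefficients, i.e.\ work with the quiver $\tilde Q$ with frozen copies, or more simply use the fact that the character formula with separate variables $x$ and $y$ still satisfies the full multiplication formula (both the $x^{-g}$ factor, additive on triangles by index additivity up to the exchange-matrix correction, and the $F$-part do). Concretely I take $\CC_T$ with principal-coefficient variables $y$ and evaluate at $x_i=1$, $y_j=z_j$, so $\CC(L;\xi)$ becomes $F_M(z)$ times a monomial. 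Third, $\phi(x_i)=\CC(\Sigma^2T_i;\xi)$ then sends $\hat y_j$ to $\prod_i F_{I_i}(z)^{b_{ij}}$ times monomials, and the monomial bookkeeping should produce $\psi_i(z)=z_i^{-1}\prod_jF_{I_j}(z)^{b_{ij}}$. Fourth, comparing $\phi(x^{-g_M}F_M(\hat y))=F_{\tau M}(z)\cdot(\text{monomial})$ and collecting the factor $F_I(z)^{\check\delta_M}$ arising from $\phi(x^{-g_M})$ together with the identity $g_{\tau M}$ versus $\check\delta_M$ and $\dv M$, I obtain \eqref{eq:AR}.

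The main obstacle is the second step: justifying that the version of the character with independent $F$-variables satisfies the full constructible multiplication formula in $\mathcal C_{Q,W}$, so that Theorem~\ref{thm:triangulated-transport} applies with $z$ algebraically independent regardless of the rank of $B$. I would first try to realize it as the genuine cluster character of a generalized cluster category for the principal extension $(\tilde Q,\tilde W)$, restricted to the subcategory of objects supported on the mutable part, where Palu's formula \cite[Theorem~1.1]{Pal12} applies; then the monomial exponents in the frozen variables are exactly $\dv$-data, and the matching of $\tau$ in the two categories must be checked. A secondary delicate point is the sign/convention check identifying the $y$-exponent of $\phi$ and the exponent $\check\delta_M$ and $-\dv M$; I would verify these on $M=P_i$ and $M=(0,\mathbf e_i)$, where both sides are explicit.
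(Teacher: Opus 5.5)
\textbf{There is a genuine gap, at the step you flag as the main obstacle.} Your overall frame matches the paper's: transport in the generalized cluster category, $\Sigma L$ corresponding to $\tau M$, and $\CC_T(\Sigma^2T_i;x)=x_i^{-1}F_{I_i}(y)$. Two conventions need fixing:
\begin{itemize}
\item Take $U=T$, not $\Sigma T$, so that $\CC_U(\Sigma U_i)=x_i$.
\item Use $\CC_T(L;x)=x^{-\check\delta_M}F_M(y)$ with $y_i=x^{b_i}$. Your $\hat y_j=\prod_i x_i^{b_{ij}}$ equals $y_j^{-1}$.
\end{itemize}
The monomial bookkeeping then closes via $\check\delta_M=\delta_M+(\dv M)B$ and $\check\delta_{\tau M}=-\delta_M$.

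Neither of your two fixes for singular $B$ works.
\begin{itemize}
\item \emph{Decoupled variables.} The character with $y$ decoupled from $x$ does \emph{not} satisfy the full multiplication formula. In type $A_1$ ($B=0$), $\Ext^1(T_1,\Sigma T_1)$ is one-dimensional and both middle terms are $0$. The formula therefore demands $\CC(T_1)\CC(\Sigma T_1)=2$, but the decoupled expression gives $x_1^{-1}(1+y_1)\cdot x_1=1+y_1$. The index correction you mention is absorbed precisely by $y=x^B$.
\item \emph{Principal coefficients.} Here the two families carry different coefficient monomials. Lemma~\ref{lem:triangulated-uniqueness} and Theorem~\ref{thm:triangulated-transport} do not allow such weights.
\item \emph{Restriction to old vertices.} Restricting the character of an enlarged category to objects supported at old vertices fails for the same reason: that class is not closed under middle terms. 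For $A_1$ with a new source $1^*\to 1$ and enlarged cluster-tilting object $\widetilde T$, one middle term for the pair $(\widetilde T_1,\Sigma\widetilde T_1)$ is $\Sigma\widetilde T_{1^*}$. This is exactly where the coefficient comes from.
\end{itemize}

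You could instead apply the transport theorem to the whole enlarged category, with all vertices mutable. It does apply there. However, any evaluation $x=\xi$ with $\xi_i\neq0$, in particular your $x_i=1$, sends the new $F$-variables to nonzero values, because they are Laurent monomials in the $x$'s. You then obtain an identity for $F_{\widetilde I_i}$ and $F_{\widetilde\tau\widetilde M}$ at $w=1$, not for $F_{I_i}$ and $F_{\tau M}$. For $A_1$, $F_{\widetilde I_1}=1+z+zw$.

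What is missing is the paper's two-step device.
\begin{itemize}
\item \emph{Nonsingular $B$.} Evaluate at algebraically independent $\xi$ and put $z=\xi^B$. The transported identity holds in $\QQ(\xi)$. It descends to $\QQ(z)$ because $z\mapsto\xi^B$ is injective on function fields; this also gives $F_{I_i}(\xi^B)\neq0$, so $\phi$ exists.
\item \emph{Arbitrary $B$.} Adjoin a source $i^*\to i$ at each vertex. Then $\widetilde B$ is invertible, $\widetilde J$ stays finite-dimensional, and the old projectives, hence the presentations, are unchanged. Apply the nonsingular case to the zero extension $\widetilde M$, using $\check\delta_{\widetilde M}=(\check\delta_M,-\dv M)$, $\res(\widetilde\tau\widetilde M)\simeq\tau M$, and $F_N(z,0)=F_{Ne}(z)$.
\item \emph{Specialization.} Set $w=0$ \emph{formally} in the resulting identity of rational functions. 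This is legitimate because every inverted element ($F_{\widetilde I_i}$ and $1+w_i$) is nonzero at $w=0$, and no $w_i^{-1}$ survives.
\end{itemize}
No evaluation of a character can produce this specialization to zero. That is why the independent-variable identity for $\widetilde J$ must be proved first.
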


The Markov quiver already shows why the ambient field in
\eqref{eq:AR} matters. Its
monomial substitution is
\begin{equation}\label{eq:markov-y} y_1=x_2^2/x_3^2,\qquad y_2=x_3^2/x_1^2,\qquad y_3=x_1^2/x_2^2,\end{equation}
so $y_1y_2y_3=1$. Thus an identity in cluster coordinates alone need
not prove \eqref{eq:AR}. We recover the full identity by adjoining sources
and setting only the newly added $F$-variables to zero. Below we will
see that the Markov transformation of the $y$-variables is nontrivial
before this specialization, but becomes the identity on
$y_1y_2y_3=1$.

\subsection{Cluster characters and suspension}
We apply Theorem~\ref{thm:triangulated-transport} in the generalized
cluster category $\C$ of the complete Ginzburg algebra. It is Hom-finite
and 2-Calabi--Yau, with cluster-tilting object $T$ satisfying
$\End_\C(T)\simeq J$ \cite[Theorem~3.5]{Ami09}; see also
\cite[Section~7.20]{KY11} for complete path algebras.

We use Palu's full multiplication theorem and constructibility of cones
\cite[Theorem~1.1 and Section~2.5]{Pal12}. In the completed setting, the
relevant Hom diagrams are represented by fixed finite-dimensional dg
truncations \cite[Theorem~2.19]{KY11}. For a fixed pair, one common
finite-dimensional replacement and a linear section represent the whole
morphism space; one power of the Ginzburg arrow ideal then annihilates
all representatives and their cones. Thus the constructibility argument
takes place over one finite-dimensional quotient for the entire family,
as in \cite[Section~2.5]{Pal12}. The same applies after adding sources.

Put $H=\C(T,-)$. Let $[\Sigma T]$ denote the ideal of morphisms
factoring through $\add\Sigma T$. The equivalence
$\C/[\Sigma T]\simeq\op{mod}J$
\cite[Proposition~2.1(c)]{KR07} gives the ordinary module part
$M^{\mathrm{ord}}=H(L)$; summands $\Sigma T_i$ give the negative
decoration. If $L$ corresponds to $M$, Palu's character is
\[ \CC_T(L;x)=x^{-c}F_M(y),\qquad c=\check\delta_M,\qquad y_i=x^{b_i}. \]
Here $\CC_T(\Sigma T_i;x)=x_i$, and a submodule of dimension $a$
contributes exponent $aB$ \cite{Pal08,Pal12}. Thus $T$ is the reference
object while $\Sigma T_i$ are the coordinate objects. We write $\ind_T$
and $\coind_T$ for the \emph{index} and \emph{coindex} relative to $T$.

Let $d=\dv M$ and $c'=\check\delta_{\tau M}$. The presentation
identities used below are
\begin{equation}\label{eq:app-weights} c=\delta_M+dB,\qquad c'=-\delta_M,\qquad c+c'=dB.\end{equation}
We prove the first identity using the index and coindex formulas of
\cite[Lemma~2.3]{Pal08}. Write
$\langle U,V\rangle=\dim\Hom_J(U,V)-\dim\Ext^1_J(U,V)$, and
let $\langle-,-\rangle_a$ be its antisymmetrization. This form descends
to $K_0(\op{mod}J)$ by \cite[Theorem~3.4]{Pal08}.
The decoration contributes equally to the index and coindex, so
\[ c_i-\delta_M(i) =\langle S_i,H(L)\rangle_a =\sum_j d_j\langle S_i,S_j\rangle_a =\sum_j d_j b_{ji}. \]
Thus the first identity holds for the completed Jacobian algebra, without
an assumption on the uncompleted Jacobian ideal. The second follows by
applying the \emph{Nakayama functor} $\nu$ to the decorated projective
presentation.
The same presentation, lifted to a triangle
\[ T_-\longrightarrow T_+\longrightarrow L\longrightarrow\Sigma T_-, \]
shows that $\Sigma L$ corresponds to $\tau M$. Indeed,
$H(\Sigma L)$ is the kernel of the induced map
$H(\Sigma^2T_-)\to H(\Sigma^2T_+)$, which is the Nakayama
transform of the presentation. This includes its decorated summands.

Taking $L=\Sigma T_i$, 2-Calabi--Yau duality gives
$H(\Sigma^2T_i)\simeq I_i$ and $\coind_T(\Sigma^2T_i)=\mathbf e_i$,
and therefore, with $y_j=x^{b_j}$,
\begin{equation}\label{eq:shifted-injective-character}
\CC_T(\Sigma^2T_i;x)=x_i^{-1}F_{I_i}(y).
\end{equation}
This formula does not require a rank assumption on $B$.

\subsection{The nonsingular case}
Assume first that $B$ is nonsingular. Introduce algebraically independent indeterminates
$\xi_1,\ldots,\xi_m$ and set $z_i=\xi^{b_i}$. Evaluating
\eqref{eq:shifted-injective-character} at $x_i=\xi_i$ gives
$\CC_T(\Sigma^2T_i;\xi)=\xi_i^{-1}F_{I_i}(z)$.
The assignments
\[ \phi:\QQ[x_1^{\pm1},\ldots,x_m^{\pm1}]\longrightarrow\QQ(\xi), \qquad x_i\longmapsto \xi_i^{-1}F_{I_i}(z) \]
define a homomorphism. Indeed, $a\mapsto aB$ is injective, so
$F_{I_i}(\xi^B)\ne0$.

We apply Theorem~\ref{thm:triangulated-transport} with
$\mathbb K=\QQ$ and $\mathbb F=\QQ(\xi)$. By the formula for
$\CC_T(\Sigma^2T_i;\xi)$, the homomorphism $\phi$ satisfies its
coordinate hypothesis. Therefore
\[ \phi(\CC_T(L;x))=\CC_T(\Sigma L;\xi) \qquad(L\in\C). \]
Since $\phi(y_i)=\psi_i(z)$, we obtain
\[ \xi^cF_I(z)^{-c}F_M(\psi(z))=\xi^{-c'}F_{\tau M}(z). \]
By \eqref{eq:app-weights}, the remaining monomial is
$\xi^{-(c+c')}=\xi^{-dB}=z^{-d}$. We have therefore proved
\eqref{eq:AR} after the substitution $z_i=\xi^{b_i}$.
Nonsingularity makes this substitution injective on rational-function
fields, so the equality holds in $\QQ(z_1,\ldots,z_m)$.

\subsection{Adding sources}
For each old vertex $i$, add a new source $i^*$ and one arrow
$i^*\to i$, keeping the potential unchanged. The enlarged exchange
matrix is
\[ \widetilde B= \begin{pmatrix}B&-\mathrm{Id}_m\\ \mathrm{Id}_m&0\end{pmatrix},\qquad \widetilde B^{-1}= \begin{pmatrix}0&\mathrm{Id}_m\\-\mathrm{Id}_m&B\end{pmatrix}. \]
The enlarged completed Jacobian algebra $\widetilde J$ is
finite-dimensional: a path involving a new vertex consists of one
new arrow followed by an old path. We may therefore apply the
nonsingular case to $\widetilde J$.

Let $\widetilde M$ be the extension of $M$, including its
decoration, by zero at the new sources. The old projectives are
unchanged as zero-extended modules, because no path goes from an old
vertex to a new one. Thus the same decorated minimal projective
presentation presents $\widetilde M$. Formula
\eqref{eq:app-weights} gives
\begin{equation}\label{eq:app-enlarged-weights} \dv\widetilde M=(d,0),\qquad \delta_{\widetilde M}=(\delta_M,0),\qquad \check\delta_{\widetilde M}=(c,-d).\end{equation}

Write $e$ for the sum of the old vertex idempotents and
$\res N=Ne$. For each old vertex,
$\res\widetilde I_i\simeq I_i$.
These identifications commute with maps between the injectives obtained
by applying the Nakayama functor to old projectives. Equivalently,
$\res\circ\nu_{\widetilde J}\circ\iota\simeq\nu_J$ on old
projectives, where $\iota$ is extension by zero. We apply this
natural isomorphism to the common decorated presentation and use
exactness of restriction. We obtain
\[ \res\bigl((\widetilde\tau\widetilde M)^{\mathrm{ord}}\bigr)\simeq(\tau M)^{\mathrm{ord}}. \]
This argument includes presentation summands $P_i\to0$ from the decoration.

Use variables $z$ at the old vertices and $w$ at the new sources.
For any $\widetilde J$-module $N$,
\[ \Gr_{(a,0)}N\simeq\Gr_a(Ne),\qquad F_N(z,0)=F_{Ne}(z). \]
Indeed, zero subspaces at the new sources impose no extra conditions
on submodules at the old vertices. It follows that
\[ F_{\widetilde I_i}(z,0)=F_{I_i}(z),\qquad F_{\widetilde\tau\widetilde M}(z,0)=F_{\tau M}(z). \]
At a new source the injective is simple, with $F$-polynomial
$1+w_i$. Put $\widetilde F_i(z,w)=F_{\widetilde I_i}(z,w)$ for old
vertices. The old coordinates of the enlarged substitution are
\[ \widetilde\psi_i(z,w) =z_i^{-1}\prod_j\widetilde F_j(z,w)^{b_{ij}}(1+w_i)^{-1}. \]
Since $F_{\widetilde M}$ involves only old variables, the
nonsingular case and \eqref{eq:app-enlarged-weights} give
\[ F_M(\widetilde\psi(z,w)) =z^{-d}\prod_i\widetilde F_i(z,w)^{c_i} \prod_i(1+w_i)^{-d_i}F_{\widetilde\tau\widetilde M}(z,w). \]
Both sides belong to
\[ \QQ(z)[w_1,\ldots,w_m] [\widetilde F_1^{-1},\ldots,\widetilde F_m^{-1},(1+w_1)^{-1},\ldots,(1+w_m)^{-1}]. \]
Every inverted element has nonzero image at $w=0$. The new
coordinate substitutions, which contain $w_i^{-1}$, do not occur
in the left-hand side; the dimension monomial on the right contains
no $w$-factor. We may therefore set $w=0$. This gives precisely
\eqref{eq:AR} in $\QQ(z_1,\ldots,z_m)$, and completes the proof
of Theorem~\ref{thm:AR}.

\subsection{The inverse and the character transformation}
\begin{proposition}\label{prop:AR-inverse}
The map $\psi$ is birational. Its inverse is given by the dual
$F$-polynomials of the indecomposable projectives:
\begin{equation}\label{eq:AR-inverse} \rho_i(y)=y_i^{-1}\prod_j\check F_{P_j}(y^{-1})^{b_{ij}}.\end{equation}
\end{proposition}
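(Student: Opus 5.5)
Proof plan. We show that $\psi\circ\rho=\mathrm{id}$ in $\QQ(y_1,\ldots,y_m)$, and that $\rho$ is well defined there, i.e.\ every $\check F_{P_j}(y^{-1})\ne0$. Since both maps are self-maps of an $m$-dimensional rational function field, one-sided invertibility then gives birationality.

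The key input is Theorem~\ref{thm:AR} applied to $M=P_j$. First we compute the data for this object. We have $\tau P_j=(0,\mathbf e_j)$, whose ordinary part is zero, so $F_{\tau P_j}=1$. The minimal injective presentation of $P_j$ gives the coweight $\check\delta_{P_j}=\check\beta_0-\check\beta_1$. We then use \eqref{eq:app-weights}: with $\delta_{P_j}=\mathbf e_j$, it gives $c=\mathbf e_j+\dv P_j\,B$. Hence
\[
F_{P_j}(\psi(z))=z^{-\dv P_j}\,F_{I_j}(z)\,F_I(z)^{\dv P_j B}.
\]
Next we rewrite the left side with the duality $\check F_N(y^{-1})=y^{-\dv N}F_N(y)$. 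This gives $F_{P_j}(\psi)=\psi^{\dv P_j}\check F_{P_j}(\psi^{-1})$. By definition of $\psi$,
\[
\psi(z)^{\dv P_j}=z^{-\dv P_j}F_I(z)^{\dv P_j B}.
\]
These monomial factors cancel exactly, leaving the clean identity
\[
\check F_{P_j}\bigl(\psi(z)^{-1}\bigr)=F_{I_j}(z).
\]
Writing $y=\psi(z)$, this says $\check F_{P_j}(y^{-1})=F_{I_j}(z)$. Two consequences follow. Since $F_{I_j}(z)\ne0$, the image field $\QQ(\psi)$ cannot make $\check F_{P_j}(y^{-1})$ vanish. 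Moreover, $\rho_i(\psi(z))=\psi_i^{-1}\prod_j F_{I_j}(z)^{b_{ij}}=z_i$.

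To conclude, we argue on the field map $\psi^*:\QQ(y)\to\QQ(z)$ given by $y_i\mapsto\psi_i(z)$. This is a priori only a homomorphism from the polynomial ring, so we first check that it is defined. The elements $\rho_i(y)$ lie in $\QQ(y)$ provided $\check F_{P_j}(y^{-1})\ne0$. This holds because the constant term of $F_{P_j}$ counts the zero submodule and equals $1$; hence the top-degree coefficient of $\check F_{P_j}(y^{-1})$, as a Laurent polynomial, is nonzero. The identity $\rho_i(\psi(z))=z_i$ shows that $\psi^*$ maps onto a subfield containing all $z_i$. Thus the $\psi_i$ generate $\QQ(z)$ and are algebraically independent, exactly as in the proof of Theorem~\ref{thm:main}. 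Therefore $\psi^*$ is an isomorphism with inverse $z_i\mapsto\rho_i(y)$, and so $\rho$ is the birational inverse of $\psi$.

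\textbf{Main obstacle.} The delicate step is the exact bookkeeping of weights for $M=P_j$, namely the claim that $\check\delta_{P_j}=\mathbf e_j+\dv P_j\,B$. This relies on the first identity in \eqref{eq:app-weights} and on the sign conventions for the decoration, here $v_{P_j}=0$. Any sign slip there would leave a stray monomial. A useful sanity check is the Markov example, where $P_i\simeq I_i$ and $\dv P_i=(4,4,4)$, which lies in the kernel of $B$, so that $c=\mathbf e_i$.
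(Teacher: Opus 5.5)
Your proof is correct and is essentially the paper's argument: you apply \eqref{eq:AR} to $P_j$ (coweight $\mathbf e_j+\dv P_j\,B$, $F_{\tau P_j}=1$), cancel $\psi(z)^{\dv P_j}$ by the submodule--quotient duality to get $\check F_{P_j}(\psi(z)^{-1})=F_{I_j}(z)$, deduce $\rho_i(\psi(z))=z_i$, and conclude that the $\psi_i$ generate $\QQ(z_1,\ldots,z_m)$. Your extra nonvanishing checks are harmless additions; note only that what you prove is $\rho\circ\psi=\mathrm{id}$ (not $\psi\circ\rho$ as your plan states), and this is exactly what the field-generation argument needs.
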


\begin{proof}
Put $p_j=\dv P_j$. The projective $P_j$ has weight $\mathbf e_j$,
coweight $\mathbf e_j+p_jB$, and $F_{\tau P_j}=1$. Applying
\eqref{eq:AR}, we obtain
\[ F_{P_j}(\psi(z))=z^{-p_j}F_I(z)^{\mathbf e_j+p_jB}=\psi(z)^{p_j}F_{I_j}(z). \]
The submodule--quotient relation therefore gives
$\check F_{P_j}(\psi(z)^{-1})=F_{I_j}(z)$. 
Substituting in \eqref{eq:AR-inverse}, we get
$\rho_i(\psi(z))=z_i$. Hence $\QQ(\psi_1,\ldots,\psi_m)$
contains every $z_i$ and equals $\QQ(z_1,\ldots,z_m)$. This proves
birationality and the inverse formula, without a rank condition on $B$.
\end{proof}

\begin{corollary}\label{cor:triangulated-character}
In the notation above, put $y_i=x^{b_i}$. Suppose that
$F_{I_i}(y)\ne0$ for every $i$. Then the assignments
$\Theta_T(x_i)=x_i^{-1}F_{I_i}(y)$
define an automorphism of $\mathbb C(x_1,\ldots,x_m)$, whose inverse is
$\Theta_T^{-1}(x_i)=x_i^{-1}\check F_{P_i}(y^{-1})$.
For every $L\in\C$, we have
\begin{equation}\label{eq:triangulated-character} \Theta_T\bigl(\CC_T(L;x)\bigr)=\CC_T(\Sigma L;x),\qquad \Theta_T^{-1}\bigl(\CC_T(L;x)\bigr)=\CC_T(\Sigma^{-1}L;x).\end{equation}
Both maps preserve the algebra
$\mathscr A_{\CC_T}=\mathbb C[\CC_T(L;x):L\in\C]$.
\end{corollary}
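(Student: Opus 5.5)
The plan is to apply Theorem~\ref{thm:triangulated-transport} directly, with $\mathbb K=\mathbb C$, $\mathbb F=\mathbb C(x_1,\ldots,x_m)$ and $\xi_i=x_i$. By \eqref{eq:shifted-injective-character}, $\CC_T(\Sigma^2T_i;x)=x_i^{-1}F_{I_i}(y)$, and this is nonzero by hypothesis. So the assignment $x_i\mapsto x_i^{-1}F_{I_i}(y)$ defines a $\mathbb C$-algebra homomorphism $\phi:\mathbb C[x^{\pm1}]\to\mathbb C(x)$ that satisfies the coordinate hypothesis of the transport theorem. That theorem then gives $\phi(\CC_T(L;x))=\CC_T(\Sigma L;x)$ for every $L\in\C$. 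No rank condition on $B$ enters at this stage.

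The next step is birationality and the inverse. Rather than pass through $\psi$, where the specialization $z=y$ can fail when $B$ is singular, I will argue directly in the $x$-variables, mirroring the proof of Theorem~\ref{thm:main}. Define $\phi'$ by $x_i\mapsto\CC_T(\Sigma^{-1}\Sigma T_i;x)=\CC_T(T_i;x)$. Since $H(T_i)=P_i$, Palu's formula gives $\CC_T(T_i;x)=x^{-c}F_{P_i}(y)$ with $c=\check\delta_{P_i}=\mathbf e_i+p_iB$. The relation $\check F_{P_i}(y^{-1})=y^{-p_i}F_{P_i}(y)$ and $y^{p_i}=x^{p_iB}$ rewrite this as $x_i^{-1}\check F_{P_i}(y^{-1})$, the claimed inverse formula. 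This value is nonzero, since $F_{P_i}$ has constant term $1$ and is therefore nonzero as a Laurent polynomial.

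Now apply the transport theorem in the direction of $\Sigma^{-1}$. The functions $L\mapsto\phi'(\CC_T(L;x))$ and $L\mapsto\CC_T(\Sigma^{-1}L;x)$ both satisfy the full constructible multiplication formula, and they agree on each $\Sigma U_i=\Sigma T_i$. Lemma~\ref{lem:triangulated-uniqueness} therefore gives $\phi'(\CC_T(L;x))=\CC_T(\Sigma^{-1}L;x)$ for all $L$.

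To conclude, evaluate $\phi$ on the values $\CC_T(T_i;x)$. Taking $L=T_i$ gives $\phi(\CC_T(T_i;x))=\CC_T(\Sigma T_i;x)=x_i$. Hence the field generated by the $\phi(x_j)$ contains every $x_i$. It is therefore all of $\mathbb C(x)$, the images $\phi(x_j)$ are algebraically independent, and $\phi$ extends to an automorphism $\Theta_T$ whose inverse sends $x_i$ to $\CC_T(T_i;x)$. That inverse agrees with $\phi'$ on generators, which yields both identities in \eqref{eq:triangulated-character}. Preservation of $\mathscr A_{\CC_T}$ is then immediate, because both maps send each generator $\CC_T(L;x)$ to another generator.

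The main obstacle is justifying that $\phi'$ is a well-defined homomorphism into $\mathbb C(x)$ and that composites of the transport make sense, that is, checking nonvanishing of the specialized values. For $\Theta_T$ this is exactly the stated hypothesis on $F_{I_i}(y)$. For the inverse it is the constant-term argument for $F_{P_i}$.
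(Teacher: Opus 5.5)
Your main line matches the paper's proof. You apply Theorem~\ref{thm:triangulated-transport} with $\xi=x$ and $\phi(x_i)=\CC_T(\Sigma^2T_i;x)$. You compute $r_i=\CC_T(T_i;x)=x_i^{-1}\check F_{P_i}(y^{-1})$ from the coweight $\mathbf e_i+p_iB$. You then use $\phi(r_i)=x_i$ to get birationality, with inverse $x_i\mapsto r_i$.

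You depart from the paper in introducing $\phi'$ and a second application of Lemma~\ref{lem:triangulated-uniqueness}. This is unnecessary. Once $\Theta_T$ is known to be an automorphism, the first identity applied to $\Sigma^{-1}L$ reads $\Theta_T\bigl(\CC_T(\Sigma^{-1}L;x)\bigr)=\CC_T(L;x)$, which is the second identity. That is how the paper concludes.

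The detour also contains an incorrect step. You argue that $\CC_T(T_i;x)\neq0$ because $F_{P_i}$ has constant term $1$, but this fails when $B$ is singular. Under $y=x^{B}$, every monomial $y^\gamma$ with $\gamma B=0$ lands on $x^0$. So the constant term of $F_{P_i}(y)$ in $x$ is $\sum_{\gamma B=0}\chi_c(\Gr_\gamma P_i)$, which need not be $1$ and could a priori vanish. In the Markov example, $F_{P_i}(y)=F_{I_i}(y)=4\kappa^2x_i^2$, and this constant term is $8$. Your reasoning would also prove $F_{I_i}(y)\neq0$ unconditionally, making the corollary's hypothesis vacuous, which signals that it cannot be right.

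The correct reason for nonvanishing is the identity $\phi(r_i)=x_i\neq0$ from your final paragraph, which does not use $\phi'$. If you delete $\phi'$, or move it after that paragraph, your argument becomes the paper's proof.
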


\begin{proof}
Write $q_i=x_i^{-1}F_{I_i}(y)$. Nonvanishing defines a homomorphism
$\phi:\mathbb C[x_1^{\pm1},\ldots,x_m^{\pm1}]\to\mathbb C(x)$.
By \eqref{eq:shifted-injective-character},
$q_i=\CC_T(\Sigma^2T_i;x)$. Theorem~\ref{thm:triangulated-transport} gives
$\phi\bigl(\CC_T(L;x)\bigr)=\CC_T(\Sigma L;x)$ for $L\in\C$.
This is the first identity in \eqref{eq:triangulated-character}.

Set $p_i=\dv P_i$. Since $P_i$ has coweight $\mathbf e_i+p_iB$,
\[ r_i:=\CC_T(T_i;x)=x_i^{-1}y^{-p_i}F_{P_i}(y)=x_i^{-1}\check F_{P_i}(y^{-1}). \]
Taking $L=T_i$ gives $\phi(r_i)=x_i$. Hence
$\mathbb C(q_1,\ldots,q_m)=\mathbb C(x_1,\ldots,x_m)$, and the
$q_i$ are algebraically independent. Thus $\phi$ extends to the
stated automorphism, with inverse $x_i\mapsto r_i$. Applying the first
identity to $\Sigma^{-1}L$ gives the second. Since suspension permutes
all objects, both automorphisms preserve $\mathscr A_{\CC_T}$.
\end{proof}

\subsection{The Markov quiver: the character involution}\label{subsec:markov-characters}
We return to \eqref{eq:markov-potential}, with indices read cyclically.
Put $q=z_1z_2z_3$ and
\begin{equation}\label{eq:markov-A} A_i(z)=1+z_i(1+z_{i-1})^2+z_iz_{i-1}^2z_{i+1}(2+z_{i+1})+q^2.\end{equation}
These are the generating polynomials for \emph{order ideals} in the
labelled posets of \cite[Example~1.3 and Figure~2]{Wen23}. Using the
projective modules of \cite[Example~4.3]{Pla13}, in our path convention,
we obtain
\begin{equation}\label{eq:markov-F} F_{P_i}(z)=F_{I_i}(z)=(1+q)^2A_i(z).\end{equation}
We hold the common factor because the theorem concerns the full
$F$-polynomials, not just their ratios.

To compute the common factor, we use the two alternating branches of
$\op{rad}P_1$, which have a common socle. A generic arrow grading satisfying
$\deg a_j+\deg b_j+\deg c_j=0$ separates its basis vectors at each
vertex. The torus-fixed proper nonzero submodules consist of the socle
and a suffix of each branch. Each suffix has generating polynomial
$(1+q)(1+z_3+z_2z_3)$; the two extra arrow maps exclude the suffix
pairs $(5,0)$ and $(0,5)$. Since Euler characteristic agrees with
that of the fixed locus, adding zero and the whole projective gives
\[ F_{P_1}=1+z_1(1+q)^2(1+z_3+z_2z_3)^2-2q^2+q^4=(1+q)^2A_1. \]
Cyclic symmetry gives the other two formulas. The same description of the projectives
shows that the linear form taking value $1$ on each socle path and
$0$ on the other path-basis elements is a \emph{symmetrizing form}: it pairs
complementary paths. We will use this identification of projectives and
injectives in the nonrigid example below.

The common factor in \eqref{eq:markov-F} cancels from the coordinate ratios:
\begin{equation}\label{eq:markov-psi} \psi_i(z)=z_i^{-1}\left(\frac{A_{i+1}(z)}{A_{i-1}(z)}\right)^2,\qquad q(\psi(z))=q^{-1}.\end{equation}
Since $P_i\simeq I_i$, their dimension vectors are all $(4,4,4)$,
and $\sum_jb_{ij}=0$ for every $i$, Proposition~\ref{prop:AR-inverse}
gives $\rho=\psi$. Thus $\psi$ is a subtraction-free involution.

Under \eqref{eq:markov-y}, we have $q=1$. Set
\[ \kappa=\frac{x_1^2+x_2^2+x_3^2}{x_1x_2x_3}. \]
Then \eqref{eq:markov-F} gives $F_{I_i}(y)=4\kappa^2x_i^2$, so the
character transformation is
\begin{equation}\label{eq:markov-theta} \Theta(x_i)=4\kappa^2x_i,\qquad \Theta(\kappa)=\frac1{4\kappa},\qquad \Theta^2=\mathrm{id}.\end{equation}
Corollary~\ref{cor:triangulated-character} now gives
\begin{equation}\label{eq:markov-covariance} \Theta\bigl(\CC_T(L;x)\bigr)=\CC_T(\Sigma L;x)\qquad(L\in\mathcal C_{Q,W}).\end{equation}
In particular, $\Theta$ preserves the algebra generated by all characters,
and $\CC_T(\Sigma^2L;x)=\CC_T(L;x)$. This is periodicity of character
values; it does not assert periodicity of the suspension functor.

We can express the involution in simpler rational coordinates. Set
\begin{equation}\label{eq:markov-ratio-coordinates} u=x_1/x_3,\qquad v=x_2/x_3,\qquad h=2\kappa.\end{equation}
They generate the same rational-function field, since
\[ x_3=\frac{2(u^2+v^2+1)}{uvh},\qquad x_1=ux_3,\qquad x_2=vx_3. \]
Formula \eqref{eq:markov-theta} becomes
\begin{equation}\label{eq:markov-inversion} \Theta(u)=u,\qquad \Theta(v)=v,\qquad \Theta(h)=h^{-1}.\end{equation}
Thus $\Theta$ is a single coordinate inversion in this rational chart.
We use these coordinates only in the function field; they do not
identify the character algebra with a Laurent polynomial ring.

The grading $\deg x_i=1$ makes every reachable cluster variable
homogeneous of degree $1$, since its exchange relation is
$x_ix_i'=x_j^2+x_k^2$. But $\Theta(x_i)$ has degree $-1$.
Thus the shifted initial characters are not reachable, and $\Theta$
does not preserve the ordinary cluster algebra.

For $\{i,j,k\}=\{1,2,3\}$, let $\mu_i$ be the field involution
\[ \mu_i(x_i)=\frac{x_j^2+x_k^2}{x_i},\qquad \mu_i(x_j)=x_j,\qquad \mu_i(x_k)=x_k. \]
Direct substitution gives $\mu_i(\kappa)=\kappa$. Since every coordinate
function of $\mu_i$ is homogeneous of degree one, we also have
\[ \Theta\circ\mu_i=\mu_i\circ\Theta. \]
Every reachable cluster is obtained by composing these involutions.
Thus $\kappa$ has the same Laurent expression in every reachable cluster,
and belongs to the upper cluster algebra $\mathcal U$. On the other hand,
\[ \Theta(\kappa)=\frac{x_1x_2x_3}{4(x_1^2+x_2^2+x_3^2)} \notin\kk[x_1^{\pm1},x_2^{\pm1},x_3^{\pm1}]. \]
Indeed, the numerator and denominator are coprime, and the denominator is
not a monomial. Hence $\Theta(\mathcal U)\not\subseteq\mathcal U$,
even though $\Theta$ commutes with every elementary mutation.

This failure is part of the known once-punctured-torus phenomenon. Zhou
constructs two non-equivalent cluster structures from distinct subfans of
the scattering diagram \cite[Section~5.3]{Zho20}. Our calculation supplies
the cluster-character interpretation: the involution is realized by
suspension on every character through Corollary~\ref{cor:triangulated-character}.

The two transformations are compatible with the monomial coordinates:
\[ \Theta(y_i)=\prod_j\Theta(x_j)^{b_{ij}}=y_i^{-1}\prod_jF_{I_j}(y)^{b_{ij}}=\psi_i(y). \]
Here we use $\Theta(x_j)=x_j^{-1}F_{I_j}(y)$. Since $\Theta$
fixes each ratio $x_i/x_j$, it fixes every $y_i$. The monomial
map \eqref{eq:markov-y} is dominant onto $q=1$; hence $\psi$
is the identity on this image, although it sends $q$ to $q^{-1}$
before specialization.

\subsection{Stability scattering and a theta-function comparison criterion}\label{subsec:chamber-products}
We compare the \emph{stability scattering diagram} of $J$, with
classical Euler-characteristic integration \cite{Bri17,CMQ24}, and the
cluster scattering diagram of \cite{GHKK18}. The chamber formulas below
reduce their equivalence to a finite family of theta-function identities.
The projective formula comes from framed Hall theory; we obtain its
injective dual directly from the AR identity.

In this subsection we introduce \emph{principal coefficients} $t_1,\ldots,t_m$ and work in
\[ \widehat R_{\mathrm{prin}}=\kk[x_1^{\pm1},\ldots,x_m^{\pm1}][[t_1,\ldots,t_m]],\qquad y_i=t_i x^{b_i}. \]
Thus the $y_i$ remain algebraically independent even when $B$ is
singular. The exponent map for a dimension vector $d$ is
$p^*(d)=(dB,d)$, so $x^{dB}t^d=y^d$. We continue to use right
$J$-modules; when applying \cite{CMQ24}, we identify them with left
$J^{\mathrm{op}}$-modules. Our exponent vectors are rows, whereas
\cite{CMQ24} uses columns; in that convention the mutable exchange
matrix corresponding to ours is $B^{\mathsf t}$.
Let
\[ C^+=\{\delta\in\RR^{Q_0}:\delta(\mathbf e_i)>0\text{ for all }i\},\qquad C^-=-C^+ \]
be the positive and negative chambers in the space of stability weights.
Write $\mathfrak p_{+\to-}$ and $\mathfrak p_{-\to+}$ for the
\emph{path-ordered products} in the stability diagram between these
chambers. We fix the sign by requiring a positive-to-negative crossing
of the simple wall $\mathbf e_i^\perp$ to act as
\[ x_j\longmapsto x_j(1+y_i)^{\delta_{ij}},\qquad t_j\longmapsto t_j. \]
Products are taken degree by degree in the $t$-adic completion;
no finite wall-crossing or mutation path is assumed.

\begin{proposition}\label{prop:chamber-products}
For the finite-dimensional completed Jacobian algebra $J$ of
Theorem~\ref{thm:AR}, the stability chamber products satisfy
\begin{equation}\label{eq:chamber-products} \mathfrak p_{+\to-}(x_i)=x_i\check F_{P_i}(y),\qquad \mathfrak p_{-\to+}(x_i)=\frac{x_i}{F_{I_i}(y)}.\end{equation}
Both products fix every $t_j$. In particular, they are rational, and
\begin{equation}\label{eq:chamber-y} \mathfrak p_{+\to-}(y_i)=y_i\prod_j\check F_{P_j}(y)^{b_{ij}},\qquad \mathfrak p_{-\to+}(y_i)=y_i\prod_jF_{I_j}(y)^{-b_{ij}}.\end{equation}
\end{proposition}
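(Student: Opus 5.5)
The plan is to take the projective formula for $\mathfrak p_{+\to-}$ as input from framed Hall theory, and then derive the injective formula from it using Theorem~\ref{thm:AR} and Proposition~\ref{prop:AR-inverse}, as the preamble to this subsection indicates.

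\textbf{Step 1: the projective formula.} I would invoke the result of \cite{CMQ24}, after translating conventions: right $J$-modules become left $J^{\mathrm{op}}$-modules, rows become columns, and the exchange matrix becomes $B^{\mathsf t}$. That result states that, in the stability scattering diagram of a finite-dimensional Jacobian algebra with principal coefficients, the path-ordered product from the positive chamber to the negative chamber acts on the initial cluster variable by a framed Hall/Euler-characteristic generating function. This is a sum over quotients of the projective, namely $\sum_\gamma \chi_c(\Gr^\gamma P_i)\,y^\gamma$. Since $y_i=t_ix^{b_i}$, we get $\mathfrak p_{+\to-}(x_i)=x_i\check F_{P_i}(y)$. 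I would check two things here: that our sign normalization at the simple walls, $x_j\mapsto x_j(1+y_i)^{\delta_{ij}}$, matches the simple-module case, where $\check F_{P_i}$ reduces to $1+y_i$ for $P_i=S_i$; and that the $t$-adic convergence is automatic because each $t$-degree is a finite computation. The products fix $t_j$ because every wall-crossing does.

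\textbf{Step 2: action on $y$.} Wall-crossing automorphisms are compatible with $p^*$, and each $t_j$ is fixed. Hence
\[
\mathfrak p_{+\to-}(y_i)=t_i\prod_j\mathfrak p_{+\to-}(x_j)^{b_{ij}}=y_i\prod_j\check F_{P_j}(y)^{b_{ij}}.
\]
This is the map $y\mapsto \rho(y^{-1})^{-1}$, up to inversion of the coordinates, in terms of $\rho$ from \eqref{eq:AR-inverse}.

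\textbf{Step 3: the injective formula.} The product $\mathfrak p_{-\to+}$ is the inverse of $\mathfrak p_{+\to-}$, so the task is to show that $x_i\mapsto x_i/F_{I_i}(y)$ inverts Step 1. Composing, I need
\[
\mathfrak p_{+\to-}\bigl(x_i/F_{I_i}(y)\bigr)=x_i,
\qquad\text{equivalently}\qquad
F_{I_i}\bigl(\mathfrak p_{+\to-}(y)\bigr)=\check F_{P_i}(y).
\]
By Step 2 and the relation $\check F_{P_j}(\psi(z)^{-1})=F_{I_j}(z)$ from the proof of Proposition~\ref{prop:AR-inverse}, the $y$-map $\mathfrak p_{+\to-}$ equals $\iota\circ\psi^{-1}\circ\iota$, where $\iota$ inverts the coordinates. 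So I would substitute $y=\psi(z)^{-1}$. Then $\check F_{P_i}(\psi(z)^{-1})=F_{I_i}(z)$, and
\[
\mathfrak p_{+\to-}(y)=\rho\bigl(\psi(z)\bigr)^{-1}=z^{-1}.
\]
The required identity therefore becomes $F_{I_i}(z^{-1})=F_{I_i}(z)$, which is false in general.

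This is where I expect the main obstacle: the bookkeeping of inversions and signs. The fix I would try first is to redo the composition directly rather than routing it through $\psi$. Apply $\mathfrak p_{+\to-}$ to $F_{I_i}(y)$, using Theorem~\ref{thm:AR} in the form $F_{I_i}(\psi(z))$ with $M=I_i$, where $\tau I_i$ corresponds to a shifted projective. Algebraic independence of the $y$'s under principal coefficients justifies specializing $z=y$ exactly, with no rank issue. The goal is to verify the two-sided identity $\mathfrak p_{+\to-}\circ\mathfrak p_{-\to+}=\mathrm{id}$ at the level of the $x_i$. If the orientation still disagrees, I would instead apply the duality $J\leftrightarrow J^{\mathrm{op}}$: it swaps $P$ with $I$ and $F$ with $\check F$, and sends $C^+$ to $C^-$. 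This yields the injective formula for the reverse path by the same Hall-theoretic argument.

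Finally, rationality follows because both sides are ratios of polynomials. Formula \eqref{eq:chamber-y} for $\mathfrak p_{-\to+}(y_i)$ follows by the same computation as in Step 2.
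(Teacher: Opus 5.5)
Your strategy is the paper's. You take the projective formula from \cite[Lemma~2.3]{CMQ24} and invert it with the identity $\check F_{P_j}(\psi(z)^{-1})=F_{I_j}(z)$ from the proof of Proposition~\ref{prop:AR-inverse}. As written, however, Step~3 does not prove the injective formula: it contains an inversion error and stops at an obstruction that does not exist.

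\emph{The error.} Your Step~2 formula $\mathfrak p_{+\to-}(y_i)=y_i\prod_j\check F_{P_j}(y)^{b_{ij}}$ is exactly $\rho_i(y^{-1})$ by \eqref{eq:AR-inverse}, with no outer inversion. So the $y$-map is $\rho\circ\iota=\psi^{-1}\circ\iota$, not $\iota\circ\psi^{-1}\circ\iota$.

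\emph{The correction.} With this fixed, your own substitution $y=\psi(z)^{-1}$ gives $\mathfrak p_{+\to-}(y)=\rho(\psi(z))=z$, not $z^{-1}$. The required identity $F_{I_i}(\mathfrak p_{+\to-}(y))=\check F_{P_i}(y)$ then becomes $F_{I_i}(z)=\check F_{P_i}(\psi(z)^{-1})$. This is precisely the identity you quoted. The change of variables is legitimate because $\psi$ is birational and the $y_i$ are algebraically independent.

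\emph{The fallbacks.} They are unnecessary, and the first uses the wrong instance of Theorem~\ref{thm:AR}. The relevant case is $M=P_j$, whose translate $\tau P_j=(0,\mathbf e_j)$ is the negative simple. For $M=I_i$ it is $\tau(0,\mathbf e_i)=I_i$ that holds, so $\tau I_i$ is not a shifted projective.

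\emph{The paper's route.} It avoids the change of variables by composing in the other order. Define the continuous substitution $V(x_i)=x_i/F_{I_i}(y)$, $V(t_i)=t_i$. It is well defined because $F_{I_i}(y)\equiv1$ modulo $(t)$. The definition of $\psi$ gives $V(y)=\psi(y)^{-1}$ directly, so
\[
V\bigl(x_i\check F_{P_i}(y)\bigr)=\frac{x_i\,\check F_{P_i}(\psi(y)^{-1})}{F_{I_i}(y)}=x_i .
\]
Hence $V\circ\mathfrak p_{+\to-}$ is the identity on all $x_i$ and $t_i$. Since the path-ordered product is an automorphism, $V=\mathfrak p_{+\to-}^{-1}=\mathfrak p_{-\to+}$, and \eqref{eq:chamber-y} follows from $y_i=t_ix^{b_i}$. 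Either order works once the inversion is handled correctly. The paper's order uses the identity only at $z=y$ and does not need $\psi$ to be birational.
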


\begin{proof}
The projective formula is \cite[Lemma~2.3]{CMQ24}, following
\cite[Theorem~1.4]{Bri17}. For the reverse product, define the continuous
substitution $V(x_i)=x_i/F_{I_i}(y)$, $V(t_i)=t_i$. Then
$V(y)=\psi(y)^{-1}$, with componentwise reciprocals. The calculation
in the proof of Proposition~\ref{prop:AR-inverse} gives
\[ V\bigl(x_i\check F_{P_i}(y)\bigr)=\frac{x_i\check F_{P_i}(\psi(y)^{-1})}{F_{I_i}(y)}=x_i. \]
Both substitutions fix every $t_i$, so
$V=\mathfrak p_{+\to-}^{-1}=\mathfrak p_{-\to+}$.
Formula \eqref{eq:chamber-y} follows from $y_i=t_ix^{b_i}$.
\end{proof}

In particular, \eqref{eq:chamber-y} gives
$\mathfrak p_{-\to+}(y_i^{-1})=\psi_i(y)$ and $\mathfrak p_{+\to-}(y_i)=\rho_i(y^{-1})$.
Thus the AR substitution includes a coordinate inversion in addition
to wall crossing. The path product itself is congruent to the identity
in the completion. It fixes the auxiliary coefficients $t_i$, unlike
the projective-injective character coordinates of Theorem~\ref{thm:main}.

Write $\vartheta^{\mathrm{st}}_{\alpha,C}$ and
$\vartheta^{\mathrm{cl}}_{\alpha,C}$ for the \emph{theta functions} of
the two diagrams, with initial exponent $(\alpha,0)$, where $\alpha\in\ZZ^{Q_0}$,
expanded in the chamber $C$. All identities below are identities of
full formal chart expansions.
In the principal-coefficient setting, the identities below follow for
injective-reachable seeds from Qin's weak-genteelness theorem
\cite[Theorem~1.2.4]{Qin24}. Here Proposition~\ref{prop:chamber-products}
turns the same identities into a criterion without a reachability
hypothesis.

\begin{corollary}\label{cor:chamber-theta}
The stability scattering diagram of $J$ is equivalent to the cluster
scattering diagram with the same principal data if and only if
\begin{equation}\label{eq:theta-comparison} \vartheta^{\mathrm{cl}}_{-\mathbf e_i,C^+}=x_i^{-1}F_{I_i}(y)\qquad(1\leq i\leq m).\end{equation}
For the stability diagram, we always have
\begin{equation}\label{eq:stability-theta} \vartheta^{\mathrm{st}}_{-a,C^+}=x^{-a}\prod_iF_{I_i}(y)^{a_i},\qquad \vartheta^{\mathrm{st}}_{a,C^-}=x^a\prod_i\check F_{P_i}(y)^{a_i}\end{equation}
for every $a\in\ZZ_{\geq0}^{Q_0}$. Under the equivalent conditions above,
\eqref{eq:stability-theta} and the chamber formulas
\eqref{eq:chamber-products} hold for the cluster diagram as well.
\end{corollary}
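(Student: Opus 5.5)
The plan is to reduce everything to a comparison of chamber products, using Proposition~\ref{prop:chamber-products} as the main input.

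\textbf{Stability formulas.} First I prove \eqref{eq:stability-theta}, which concerns only the stability diagram. Theta functions are multiplicative and are transported by path-ordered products. By convexity of $C^+$ and $C^-$, the theta function $\vartheta_{\beta,C}$ with $\beta$ in the closure of the chamber $C$ is just the monomial $x^\beta$. Thus $\vartheta^{\mathrm{st}}_{-a,C^-}=x^{-a}$. Transporting this from $C^-$ to $C^+$ by $\mathfrak p_{-\to+}$ and using \eqref{eq:chamber-products} gives
\[
\vartheta^{\mathrm{st}}_{-a,C^+}=\prod_i\bigl(x_i/F_{I_i}(y)\bigr)^{-a_i},
\]
which is the first formula. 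Likewise $\vartheta^{\mathrm{st}}_{a,C^+}=x^a$, and applying $\mathfrak p_{+\to-}$ gives $x^a\prod_i\check F_{P_i}(y)^{a_i}$. The sign conventions must be checked against the crossing convention fixed before Proposition~\ref{prop:chamber-products}. That is routine bookkeeping.

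\textbf{Equivalence criterion.} Both diagrams are consistent, with the same initial incoming walls (the simple walls) and the same principal data, and both are determined by their chamber product $\mathfrak p_{-\to+}$. More precisely, a consistent scattering diagram is determined up to equivalence by its wall-crossing automorphism between $C^+$ and $C^-$, via the Kontsevich--Soibelman factorization in the pro-unipotent group. So the two diagrams are equivalent if and only if their $\mathfrak p_{-\to+}$ agree. Since both products fix every $t_j$ and are continuous ring automorphisms, they agree if and only if they agree on each $x_i$. By the transport argument above, applied to the cluster diagram, one has
\[
\mathfrak p^{\mathrm{cl}}_{-\to+}(x_i^{-1})=\vartheta^{\mathrm{cl}}_{-\mathbf e_i,C^+}.
\]
By the first step, $\mathfrak p^{\mathrm{st}}_{-\to+}(x_i^{-1})=x_i^{-1}F_{I_i}(y)$. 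Agreement on $x_i^{-1}$ is equivalent to agreement on $x_i$, so condition \eqref{eq:theta-comparison} is equivalent to equality of the chamber products, and hence to equivalence of the diagrams.

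\textbf{Transfer to the cluster diagram.} Once the diagrams are equivalent, their theta functions and chamber products coincide. Then \eqref{eq:stability-theta} and \eqref{eq:chamber-products} hold for the cluster diagram as well.

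\textbf{Main obstacle.} The delicate step is the claim that consistent diagrams with the same initial data are equivalent exactly when their total chamber products agree. I would cite \cite{GHKK18} for the uniqueness of consistent completions with given incoming walls. I would also argue that the $C^-\to C^+$ product determines the diagram through the unique factorization into wall-crossing automorphisms ordered by slope, degree by degree in the $t$-adic filtration. One must verify that the stability diagram of \cite{Bri17,CMQ24} lives in the same group and uses the same grading, including the $B$ versus $B^{\mathsf t}$ convention noted above.
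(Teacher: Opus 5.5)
Your outline matches the paper's proof except at one step. Monomials are transported from $C^-$ and $C^+$ by Proposition~\ref{prop:chamber-products}, and equivalence is reduced to equality of the total chamber products. The gap is the step you call routine: that $\vartheta_{\beta,C}=x^\beta$ for $\beta\in\overline C$ and $C=C^\pm$ ``by convexity.''

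Convexity only shows that the unbent broken line $s\mapsto Q-s\beta$ stays in $C$ and contributes $x^\beta$. It does not exclude broken lines that bend on walls outside $C$ and then enter $C$. The step is load-bearing. Both formulas in \eqref{eq:stability-theta} rest on it. So does the converse half of the criterion, which needs $\vartheta^{\mathrm{cl}}_{-\mathbf e_i,C^-}=x_i^{-1}$ in the \emph{cluster} diagram. In the standard treatment that identity comes from $C^-$ being a cluster chamber, that is, from injective-reachability, and this fails for the Markov quiver.

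The missing input is \cite[Proposition~3.8]{GHKK18}. Its broken-line argument uses only nonnegative wall degrees and skew-symmetry of $B$, so it also runs in reverse for $C^-$; this is what the paper means by reversing its inequalities. Concretely, take a broken line $\ell$ with initial exponent $(\beta,0)$ and endpoint $Q$. Suppose it bends at times $s_1<\cdots<s_r\le0$ on walls in $d_j^\perp$, with $d_j\in\ZZ_{\ge0}^{Q_0}\setminus\{0\}$, each bend adding $k_jp^*(d_j)$ with $k_j\ge1$. Put $D_j=\sum_{l\le j}k_ld_l$. For $s\in(s_j,s_{j+1})$, let $\Lambda(s)$ be the weight part of $\ell(s)$ evaluated on $D_j$, and set $\Lambda=0$ before $s_1$.

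\begin{enumerate}
\item $\Lambda$ is continuous, since $\ell(s_j)\in d_j^\perp$.
\item Its slope on $(s_j,s_{j+1})$ is $-\beta(D_j)-D_jBD_j^{\mathsf t}=-\beta(D_j)$.
\item If $\beta\in\overline{C^+}$, this forces $\Lambda(0)\le0$. But $Q\in C^+$ and $r\ge1$ give $\Lambda(0)>0$.
\item For $C^-$ every inequality reverses.
\end{enumerate}

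Hence no broken line bends, in either diagram, and no reachability is needed.

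Two smaller points. First, theta functions are not multiplicative in general. You only need $\vartheta_{a,C^+}=x^a$, $\vartheta_{-a,C^-}=x^{-a}$, and the fact that the chamber products are ring automorphisms. Second, for the reduction to chamber products, the operative statement is the bijection between consistent diagrams and group elements \cite[Theorem~2.2 and Remark~2.4]{CMQ24}. You also need faithfulness of the principal-coefficient action. The uniqueness of consistent completions in \cite{GHKK18} would not suffice on its own. It presupposes that all non-initial walls are outgoing, and that is part of what is at stake for the stability diagram.
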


\begin{proof}
The chamber-monomial argument of \cite[Proposition~3.8]{GHKK18} gives
\[ \vartheta^{\bullet}_{a,C^+}=x^a,\qquad \vartheta^{\bullet}_{-a,C^-}=x^{-a}\qquad(a\in\ZZ_{\geq0}^{Q_0},\ \bullet\in\{\mathrm{st},\mathrm{cl}\}). \]
The proof uses only nonnegative wall degrees and skew-symmetry; reversing
its inequalities gives the negative-chamber statement without reachability.
By theta transport \cite[Theorem~3.5]{GHKK18},
$\vartheta^{\bullet}_{-\mathbf e_i,C^+}=\mathfrak p^{\bullet}_{-\to+}(x_i^{-1})$.
Proposition~\ref{prop:chamber-products} evaluates the stability expression
as $x_i^{-1}F_{I_i}(y)$. Equivalent diagrams therefore satisfy
\eqref{eq:theta-comparison}. Conversely, these identities make the two
chamber products agree on every $x_i^{-1}$. Both fix the $t_i$,
so their continuous actions agree. The principal-coefficient action is
faithful, and a consistent diagram is determined up to equivalence by
its total chamber product \cite[Theorem~2.2 and Remark~2.4]{CMQ24}.
This proves the converse.

Finally, transporting $x^{-a}$ from $C^-$ to $C^+$, and
$x^a$ in the reverse direction, gives \eqref{eq:stability-theta}
by Proposition~\ref{prop:chamber-products}. Equivalent diagrams have
the same chamber formulas.
\end{proof}

Using the positive-to-negative product instead, the same proof gives the
equivalent projective criterion
\[ \vartheta^{\mathrm{cl}}_{\mathbf e_i,C^-}=x_i\check F_{P_i}(y)\qquad(1\leq i\leq m). \]

The comparison also holds in some nonreachable cases, including
once-punctured closed surfaces of genus at least two
\cite[Theorem~1.4]{CMQ24}.

The Markov example shows that the criterion is not automatic. Its
stability and cluster scattering diagrams are not equivalent
\cite[Theorem~1.4]{CMQ24}, whereas \eqref{eq:markov-F} gives
\[ \vartheta^{\mathrm{st}}_{-\mathbf e_i,C^+}=x_i^{-1}(1+y_1y_2y_3)^2A_i(y). \]
By Corollary~\ref{cor:chamber-theta}, these cannot all be the corresponding
cluster theta functions. The common factor cancels from the $y_i$-images
in \eqref{eq:chamber-y}, since each row of $B$ sums to zero. Thus the
framed theta identities detect information that the induced
$y$-coordinate transformation can lose.

\section{Positroid, Grassmannian, and unipotent-cell twists}\label{sec:geometric}

We prove the twist formula for every object of the positroid Frobenius
category. The Grassmannian formula is its specialization to the uniform
positroid; we treat unipotent cells by the same argument.
In the twist formulas, we work with the indicated boundary functions
inverted. For Grassmannians and positroids, we use the coordinate ring
of the affine cone with the boundary Pl\"ucker coordinates inverted,
not just the field of degree-zero functions on the projective variety.

We use the \emph{source convention} of \cite{CKP24,Pre}: the
\emph{circle algebra} $C_{k,n}$ has relation $\mathsf y^k=\mathsf x^{n-k}$, and
$\mathsf x_i$ acts by $t$ on the \emph{rank-one object} $L_I$ when
$i\in I$, and by $1$ otherwise. For a positroid $\Pcal$, let
$\mathcal B_\Pcal$ be its \emph{boundary order} in this convention,
and put
\[ \E_\Pcal=\GP\mathcal B_\Pcal. \]
Here $\CM$ denotes \emph{Cohen--Macaulay modules}, finite free over
$\mathbb C[[t]]$, and $\GP$ denotes \emph{Gorenstein-projective
modules} over the indicated order.
We write $\Phi_\Pcal$ for its source Fu--Keller character, normalized
by $\Phi_\Pcal(L_I)=\Delta_I$ on a source Postnikov cluster.
Its projective-injective summands have the distinct
\emph{source-necklace} labels $\mathcal I^{\mathrm{src}}_\Pcal$.
We use one frozen variable for each such isomorphism class.
Appendix~\ref{app:positroid} compares the circle conventions and specifies
the boundary order also for disconnected necklaces.

We use the mutually inverse normalized Muller--Speyer twists
$\tw_L,\tw_R$ in the source convention \cite{MS}. In a Hom-finite
2-Calabi--Yau triangulated category, $\tau\simeq\Sigma$, so
\[ \overline{\Omega M}\simeq\tau^{-1}\bar M,\qquad \overline{\Omega^{-1}M}\simeq\tau\bar M. \]
With our convention, the left twist corresponds to inverse
Auslander--Reiten translation, and the right twist to forward translation.
The projective-injective factors hold the information lost on passing
to the stable category.

\begin{theorem}[Geometric twist formula]\label{thm:geometric}
The following formulas describe the syzygy transformation of character values.

\smallskip\noindent
\textup{(a) Positroids.}
For every positroid $\Pcal$ and every $M\in\E_\Pcal$,
\begin{equation}\label{eq:positroid} \tw_L^*\Phi_\Pcal(M)= \frac{\Phi_\Pcal(\Omega_\Pcal M)}{\Phi_\Pcal(P_M)},\qquad \tw_R^*\Phi_\Pcal(M)= \frac{\Phi_\Pcal(\Omega_\Pcal^{-1}M)}{\Phi_\Pcal(I_M)}.\end{equation}
Both the presentations and the syzygies are computed in
$\E_\Pcal$. In particular,
$\tw_L^*\Delta_J=\tw_R^*\Delta_J=\Delta_J^{-1}$ for
$J\in\mathcal I^{\mathrm{src}}_\Pcal$.

\smallskip\noindent
\textup{(b) Grassmannians.}
For the uniform positroid, $\E_\Pcal=\CM(C_{k,n})$ and
$\Phi_\Pcal$ is the Grassmannian character $\Psi$. Thus, for every
$M\in\CM(C_{k,n})$,
\begin{equation}\label{eq:grass} \tw_L^*\Psi(M)=\frac{\Psi(\Omega M)}{\Psi(P_M)},\qquad \tw_R^*\Psi(M)=\frac{\Psi(\Omega^{-1}M)}{\Psi(I_M)}.\end{equation}

\smallskip\noindent
\textup{(c) Unipotent cells.}
Let $Q$ be a finite acyclic quiver, $\Lambda$ its preprojective
algebra, and $w$ an element of the Weyl group of the associated
symmetric Kac--Moody datum. Let $\mathcal C_w$ be the
Geiss--Leclerc--Schr\"oer Frobenius category and $\varphi$ its
character on the unipotent cell $N^w$. For every $M\in\mathcal C_w$,
\begin{equation}\label{eq:unipotent} (\eta_w^*)^{-1}\varphi_M= \frac{\varphi_{\Omega_w M}}{\varphi_{P_M}},\qquad \eta_w^*\varphi_M= \frac{\varphi_{\Omega_w^{-1}M}}{\varphi_{I_M}}.\end{equation}
The projective-injective terms here are relative to $\mathcal C_w$,
not projective covers in the ambient category of $\Lambda$-modules.
\end{theorem}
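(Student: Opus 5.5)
The plan is to deduce all three parts from Corollary~\ref{cor:seed}, applied with $\Theta$ equal to the relevant geometric twist. By the Corollary, it is enough to check the twist on a single cluster: $\Theta(x_i)=\CC(\Omega T_i)/\CC(P_{T_i})$ for the summands of one cluster-tilting object $T$. Once this holds, the Corollary gives the formula for every object together with its inverse. So for each case I must set up four things:
\begin{enumerate}
\item a Frobenius category whose stable category is Hom-finite and 2-Calabi--Yau;
\item a character satisfying \eqref{eq:full} with constructible integrands;
\item a field $K$ on which the twist pullback acts as an automorphism;
\item an identification of the twist values on one seed.
\end{enumerate}
Here $K$ is the fraction field of the boundary-localized coordinate ring of the affine cone. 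The twist is a birational automorphism of the cell, so its pullback is an automorphism of the function field.

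For (c) I would take $T$ to be the GLS cluster-tilting object $V_{\mathbf i}$ for a reduced word. The structural input is standard: $\mathcal C_w$ is Frobenius and stably 2-Calabi--Yau, and the multiplication formula holds via the preprojective/Fu--Keller version. The seed identification is GLS \cite[Theorem~6]{GLS12} specialized to the summands $V_k$: it computes $(\eta_w^*)^{-1}\varphi_{V_k}$ in terms of $\varphi_{\Omega_w V_k}$ and frozen projective factors. I only need that formula on these $n$ objects. Part (b) would then follow from (a) in the uniform case, using $\GP\mathcal B_\Pcal=\CM(C_{k,n})$ and $\Phi_\Pcal=\Psi$. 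Alternatively it follows directly from JKS on the rectangle cluster, and I would remark that this recovers \cite[Proposition~9.14]{JKS24}.

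For (a) with $\Pcal$ connected, I would fix a source Postnikov cluster $T=\bigoplus L_I$. The rank-one case \cite[Theorem~12.2]{CKP24}, or Pressland's reachable rigid case \cite[Theorem~5.27]{Pre}, gives
\[
\tw_L^*\Delta_I=\frac{\Phi_\Pcal(\Omega L_I)}{\Phi_\Pcal(P_{L_I})}
\]
for the cluster summands. One caveat: if their normalization differs from mine by a frozen monomial or by the affine-cone grading, I would have to track it through the appendix convention comparison. Nonvanishing \eqref{eq:nonvanishing} comes for free from the Corollary. The multiplication formula for the Hom-infinite category $\E_\Pcal$ would be obtained from the preprojective formula via Grassmannian homogenization, as announced, restricted to the corner algebra giving $\mathcal B_\Pcal$. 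The frozen statement $\tw^*\Delta_J=\Delta_J^{-1}$ is then the last clause of Theorem~\ref{thm:main}. Applying the Corollary to $\tw_L^*$ gives both identities of \eqref{eq:positroid}, since $\tw_R^*=(\tw_L^*)^{-1}$.

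The main obstacle is the disconnected positroid case, where no Postnikov-seed formula is directly available. There I would factor $\Pcal$ into connected pieces. The positroid variety, its twist, and $\E_\Pcal$ should decompose compatibly, up to identifying the frozen variables of the glued necklace. The plan is to show two things:
\begin{itemize}
\item the corner algebra $\mathcal B_\Pcal$ is a quotient of the product of the component orders by a frozen factorization ideal;
\item characters factor accordingly.
\end{itemize}
The seed identity then reduces to the connected case componentwise. I expect verifying that the frozen ideals of the relevant corner algebras match to be the delicate step, and would defer it to the appendix.
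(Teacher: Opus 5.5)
Your overall route is the paper's: reduce to one seed via Corollary~\ref{cor:seed}, use \cite[Theorem~12.2]{CKP24} on a connected source Postnikov cluster, specialize to the uniform positroid for (b), and use $V_{\mathbf i}$ for (c). Two small corrections apply there.

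For (c), the seed input should be the twisted-initial-minor computation of \cite[Section~9.3]{GLS12}. \cite[Theorem~6]{GLS12} is already the all-object formula, so restricting it to $V_{\mathbf i}$ and re-extending makes (c) a citation rather than a consequence of the Corollary.

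For the multiplication formula on $\E_\Pcal$, nothing is restricted to a corner algebra. You embed $\GP\mathcal B_\Pcal$ fully and exactly as an extension-closed subcategory of the Grassmannian category \cite[Theorems~2.1 and 3.11]{JRS26}, so extension spaces and middle terms are the ambient ones, and restrict \cite[equation~(4.18)]{JRS26}. You also need the character comparison $\Phi_\Pcal=p\circ\Psi$ \cite[Proposition~4.1 and Corollary~4.2]{JRS26}, which your sketch omits.

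The genuine gap is the disconnected case. It is the only new input, you defer it, and the structural claim you plan to prove is false as stated. A two-sided ideal in a product of unital rings is a product of ideals, so any quotient of $\mathcal B_{\Pcal_1}\times\mathcal B_{\Pcal_2}$ is again a product, and $\E_\Pcal$ would split with no shared frozen summand. But rank-one modules always have nonzero Homs between them, and the glued category is connected through the common frozen face $I_*\cup J_*$.

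What actually splits is the stable endomorphism algebra of the cluster-tilting object. Put $A=\End(T)^{\mathrm{op}}$ and take corners $A_a=\varepsilon_aA\varepsilon_a$, identified with the smaller endomorphism algebras by adjoining $J_*$ or $I_*$ to labels. One proves $\varepsilon_a(Ae_{\mathsf F}A)\varepsilon_a=A_ae_{\mathsf F_a}A_a$: by the arrow hypothesis from the plabic quiver, a path leaving $V_a$ must do so from a frozen vertex. This gives $A/Ae_{\mathsf F}A\simeq\underline A_1\times\underline A_2$. You sensed correctly that frozen ideals of corners must be compared.

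To exploit the splitting, you need an ingredient missing from your plan: a formula for the seed values depending only on the stable algebra. This is $q_i=x_i^{-1}\sum_\gamma\chi_c(\Gr^\gamma H_i)x^{\beta(\gamma)}$, with $H_i=\underline{\Hom}(T,L_i)$ and $\beta$ the Euler-class map, together with the fact that $\beta$ restricts to the exponent maps of the smaller positroids.

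Your step ``characters factor accordingly'' is not available object by object. The comparison is of these seed values under the Segre embedding $\partial$. There, the rank identity $\sum_j\beta(\gamma)_j=0$ rules out stray powers of the shared frozen coordinate and gives $\partial(q_{I\cup J_*})=q^{(1)}_I\,b_{J_*}^{-1}$. This matches $\tw^*_{L,1}(a_I)\,\tw^*_{L,2}(b_{J_*})$ because the twist factors across direct sums \cite[Proposition~4.20 and Corollary~4.21]{Pre}. Finally, you must control the shuffle signs, using that $x^{\beta(\gamma)}$ has column weight zero. You must also remove loops, coloops, and single-basis components before inducting on connected components.
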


Both positroid identities use the same source character and Frobenius
category. We do not extend them to $\CM\mathcal B_\Pcal$ outside
$\GP\mathcal B_\Pcal$, nor identify source and target characters there.

\begin{proof}[Proof of Theorem~\ref{thm:geometric}]
\emph{Multiplication formulas and character comparisons.}
For $\mathcal C_w$, the character $\varphi$ satisfies the full
preprojective multiplication formula \cite[Theorem~1]{GLS07}; extension
spaces and middle terms are those of this extension-closed subcategory.
Its agreement with the Fu--Keller character for every object is
\cite[Theorem~4]{GLS12}.

For $\CM(C_{k,n})$, we use the Grassmannian character obtained
by homogenizing the preprojective character \cite[Section~9]{JKS16}.
The full multiplication formula follows by the same homogenization:
all middle terms have rank equal to the sum of the ranks of the end
terms. The formula, including its constructible strata, is recorded in
\cite[equation~(4.18)]{JRS26}. For a reachable cluster-tilting object
$T$, its Laurent expression agrees with the Fu--Keller character
on every object by \cite[Theorem~9.11]{JKS24}. We choose such a
Postnikov cluster-tilting object; no reachability assumption is imposed
on the object whose character we evaluate.

For general boundary orders, we use the full exact embedding of
$\GP\mathcal B_\Pcal$ in the Grassmannian category and the cluster
subcategory theorem of Jensen--Riordan--Su
\cite[Theorems~2.1 and 3.11]{JRS26}, transported along our change of
conventions. In particular, its extension spaces and middle terms are
those of the ambient category. The character comparison is
\cite[Proposition~4.1 and Corollary~4.2]{JRS26}, and the localized
coordinate-ring identification is \cite[Theorem~7.9]{JRS26}.
We restrict the full formula \cite[equation~(4.18)]{JRS26} to this
extension-closed subcategory and then apply the character comparison.
This proves \eqref{eq:full} for $\Phi_\Pcal$, with constructible
integrands. These results include disconnected necklaces.

\smallskip\noindent\emph{Positroids and Grassmannians.}
For a connected Postnikov diagram, \cite[Theorem~12.2]{CKP24} gives
\[ \tw_L^*(\Delta_I)= \frac{\Phi_\Pcal(\Omega L_I)}{\Phi_\Pcal(P_{L_I})} \]
for rank-one modules. In particular this holds for every summand of one
source cluster-tilting object, including its projective summands.
Corollary~\ref{cor:seed} gives the first identity in
\eqref{eq:positroid} for every object. Its inverse gives the second.
The full Grassmannian is the uniform-positroid case, in which the
boundary order is $C_{k,n}$, so the same argument proves
\eqref{eq:grass}.

The initial identities for disconnected positroids are proved in
Appendix~\ref{app:positroid}. Corollary~\ref{cor:seed} then applies
without a connectedness assumption.

\smallskip\noindent\emph{Unipotent cells.}
We choose the cluster-tilting object $V_{\mathbf i}$ associated to a
reduced word for $w$. The calculation of twisted initial minors in
\cite[Section~9.3]{GLS12} identifies
\[ (\eta_w^*)^{-1}(\varphi_{V_{\mathbf i,j}}) =\frac{\varphi_{\Omega_wV_{\mathbf i,j}}} {\varphi_{P_{V_{\mathbf i,j}}}}. \]
Corollary~\ref{cor:seed} gives \eqref{eq:unipotent} for every object,
recovering \cite[Theorem~6]{GLS12}.
\end{proof}

Setting all frozen variables to $1$ in the character identities gives
covariance under suspension in the stable category. This assertion concerns
specialization of the character identities, not unrestricted specialization
of arbitrary rational functions.

\subsection*{The Marsh--Scott normalization}
For a rank-one Grassmannian module $L_I$, the particular nonminimal
projective presentation used in \cite[Sections~9 and 11]{CKP24} is
\[ 0\longrightarrow\Omega^\circ L_I\longrightarrow P_I^\circ \longrightarrow L_I\longrightarrow0. \]
For the original Marsh--Scott twist \cite{MSc}, we use the identity
of \cite[equation~(11.2)]{CKP24}
\[ \tw_{\mathrm{MS}}^*(\Delta_I)=\Psi(\Omega^\circ L_I). \]
Consequently, \eqref{eq:grass} gives
\[ \tw_{\mathrm{MS}}^*(\Delta_I) =\Psi(P_I^\circ)\,\tw_L^*(\Delta_I). \]
Thus the two normalizations differ by the frozen monomial
$\Psi(P_I^\circ)$. To obtain the perfect-matching expansion of the
rank-one character, we use \cite[Theorem~10.3]{CKP24}.

\subsection{Partition-function algebras}\label{subsec:partition-algebras}
Fix a positroid $\Pcal$ and a source Postnikov cluster-tilting object
$T\in\E_\Pcal$. Let $\mathfrak a_\Pcal$ be the homogeneous ideal of the
positroid variety, and let $S_\Pcal$ be the multiplicative set
generated by the boundary Pl\"ucker coordinates. Write $\overline S_\Pcal$
for its image in $\kk[\Gr(k,n)]/\mathfrak a_\Pcal$. We set
\[ R_\Pcal=\bigl(\kk[\Gr(k,n)]/\mathfrak a_\Pcal\bigr)[\overline S_\Pcal^{-1}] \]
and use the natural projection
\[ p:\kk[\Gr(k,n)][S_\Pcal^{-1}]\longrightarrow R_\Pcal. \]
The character comparison \cite[Proposition~4.1 and Corollary~4.2]{JRS26}
gives $\Phi_\Pcal(M)=p(\Psi(M))$ for $M\in\E_\Pcal$, where
$\Psi$ is the Grassmannian character in our source convention.

For $M\in\E_\Pcal$, we set
$\mathfrak P_M=\Psi(\Omega_\Pcal M)/\Psi(P_M)$, the
\emph{generalized partition function} of \cite[equation~(7.6)]{JRS26}.
Its denominator is a boundary monomial, so \eqref{eq:positroid} gives
\begin{equation}\label{eq:partition-image} \tw_L^*\Phi_\Pcal(M)=p(\mathfrak P_M)=\frac{\Phi_\Pcal(\Omega_\Pcal M)}{\Phi_\Pcal(P_M)}.\end{equation}
Thus the partition function, after projection, is the twist of the
character on all of $\E_\Pcal$. This answers
\cite[Remark~7.7]{JRS26} on that category; its cluster-character
property was proved in \cite[Proposition~7.6]{JRS26}.

We distinguish the algebras generated by all character values,
\[ \mathscr A_\Psi=\kk[\Psi(M):M\in\E_\Pcal],\qquad \mathscr A_{\mathfrak P}=\kk[\mathfrak P_M:M\in\E_\Pcal], \]
from the cluster algebras $\mathcal A_T$ and
$\mathcal A_T^{\mathfrak P}$ with initial coordinates $\Psi(T_i)$
and $\mathfrak P_{T_i}$, respectively, and the same exchange matrix.
These are the character and partition algebras of
\cite[Section~7.4]{JRS26}. We include the frozen generators, but do not
adjoin their inverses automatically. In particular, the partition seed
has frozen generators $\mathfrak P_{T_f}=\Psi(T_f)^{-1}$.

\begin{corollary}\label{cor:partition-algebra}
For every positroid, $\mathscr A_{\mathfrak P}=\mathcal A_T^{\mathfrak P}$.
The assignments on all character values define an isomorphism
\[ \widetilde{\tw}_L:\mathcal A_T\xrightarrow{\sim}\mathcal A_T^{\mathfrak P},\qquad \widetilde{\tw}_L(\Psi(M))=\mathfrak P_M\quad(M\in\E_\Pcal), \]
characterized by
\begin{equation}\label{eq:partition-intertwining} p\circ\widetilde{\tw}_L=\tw_L^*\circ p.\end{equation}
The equality and the isomorphism hold before further boundary localization,
without a reachability assumption on the shifted cluster-tilting object.
\end{corollary}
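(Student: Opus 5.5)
The plan is to build $\widetilde{\tw}_L$ upstairs, before projection, by running the argument of Theorem~\ref{thm:main} directly for the Grassmannian character $\Psi$ restricted to $\E_\Pcal$. Concretely, I would regard $\Psi|_{\E_\Pcal}$ as a multiplicative function with values in $K=\Frac\kk[\Psi(T_i)^{\pm1}]$; here the $\Psi(T_i)$ are algebraically independent, since they form an initial cluster of $\mathcal A_T$. Restricting the Grassmannian full formula \cite[equation~(4.18)]{JRS26} to the extension-closed subcategory $\E_\Pcal$ shows that $\Psi|_{\E_\Pcal}$ satisfies \eqref{eq:full} with constructible integrands. Lemmas~\ref{lem:schange} and \ref{lem:horseshoe} then give the same for $M\mapsto\mathfrak P_M$. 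The one thing needed is nonvanishing of $\mathfrak P_{T_i}$, and this follows from \eqref{eq:partition-image}: $p(\mathfrak P_{T_i})=\tw_L^*\Phi_\Pcal(T_i)\ne0$.

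Next I would define $\phi$ on $\kk[\Psi(T_i)^{\pm1}]$ by $\Psi(T_i)\mapsto\mathfrak P_{T_i}$. Lemma~\ref{lem:uniqueness} applies to $\phi\circ\Psi$ and $\mathfrak P$, and gives $\phi(\Psi(M))=\mathfrak P_M$ for all $M\in\E_\Pcal$. Birationality follows exactly as in the proof of Theorem~\ref{thm:main}, using cosyzygies of the $T_i$. Hence $\phi$ extends to a field automorphism that maps $\mathscr A_\Psi$ onto $\mathscr A_{\mathfrak P}$, at least once the frozen inverses are handled. The frozen values are $\mathfrak P_{T_f}=\Psi(T_f)^{-1}$, so no inverses have to be adjoined.

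For the cluster algebras, observe that $\phi$ sends the initial seed $(\Psi(T_i),B)$ to $(\mathfrak P_{T_i},B)$. A field isomorphism that sends one seed to another with the same exchange matrix sends every mutation of it to the corresponding mutation, so $\phi(\mathcal A_T)=\mathcal A_T^{\mathfrak P}$. The equality $\mathscr A_{\mathfrak P}=\mathcal A_T^{\mathfrak P}$ then reduces to $\mathscr A_\Psi=\mathcal A_T$ (all characters lie in the cluster algebra and generate it). I expect the source of this equality to be the cluster structure theorem of \cite{JRS26} for $\E_\Pcal$, namely the identification of the character algebra with the cluster algebra in which frozen variables are not inverted. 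This is the main obstacle. If only the inclusion $\mathcal A_T\subseteq\mathscr A_\Psi$ is available from reachable objects, I would try to prove the reverse inclusion by applying \cite[Theorem~7.9]{JRS26} and checking that no boundary localization is needed.

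Finally, \eqref{eq:partition-intertwining} follows because $p\circ\widetilde{\tw}_L$ and $\tw_L^*\circ p$ agree on every $\Psi(M)$ by \eqref{eq:partition-image} and $\Phi_\Pcal=p\circ\Psi$. These elements generate $\mathcal A_T$, and this agreement is also what characterizes the map uniquely.
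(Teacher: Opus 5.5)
Your argument is correct, but it runs in the opposite direction from the paper's.

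\textbf{The paper's route.} The paper never builds a field automorphism upstairs. It takes $\theta=\tw_L^*$ on $R_\Pcal$ and uses \eqref{eq:partition-image} and mutation induction to get $\theta(p(\mathscr A_\Psi))=p(\mathscr A_{\mathfrak P})$ and $\theta(p(\mathcal A_T))=p(\mathcal A_T^{\mathfrak P})$. It then lifts through $p$. For this it imports from \cite[Theorem~7.9]{JRS26} both $\mathscr A_\Psi=\mathcal A_T$ and injectivity of $p$ on $\mathscr A_\Psi$ and $\mathscr A_{\mathfrak P}$. It also uses the cluster-character property for $\mathcal A_T^{\mathfrak P}\subseteq\mathscr A_{\mathfrak P}$.

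\textbf{Your route.} You rerun Theorem~\ref{thm:main} for $\Psi|_{\E_\Pcal}$ itself. Then $\widetilde{\tw}_L$ is the restriction of a field automorphism of $\kk(\Psi(T_i))$, and $\mathscr A_{\mathfrak P}=\phi(\mathscr A_\Psi)=\phi(\mathcal A_T)=\mathcal A_T^{\mathfrak P}$. This equality needs neither injectivity of $p$ nor the cluster-character property. The geometric twist enters only through nonvanishing and the intertwining.

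Two points need adjusting.

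\emph{Laurentness upstairs is needed from the start.} That $\Psi(M)\in\kk[\Psi(T_i)^{\pm1}]$ is not automatic. For a non-uniform positroid the $\Psi(T_i)$ even generate a proper subfield of $\Frac\kk[\Gr(k,n)]$. You need this membership already to define $\phi\circ\Psi$. You also need it to apply $\phi$ to the constructible sums, and to the cosyzygy values $\Psi(C_i)$ in the birationality step. It follows from $\mathscr A_\Psi=\mathcal A_T$ together with the Laurent phenomenon. So \cite[Theorem~7.9]{JRS26} is an input at the outset, not only in your last step. Your fallback, with only $\mathcal A_T\subseteq\mathscr A_\Psi$ available, would not let the argument start.

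\emph{The characterization needs injectivity of $p$.} Agreement on the generators $\Psi(M)$ characterizes $\widetilde{\tw}_L$ by its values. Characterization by \eqref{eq:partition-intertwining}, however, needs $p$ injective on $\mathcal A_T^{\mathfrak P}$. In your setting this is easy. The images $p(\Psi(T_i))=\Phi_\Pcal(T_i)$ are algebraically independent, and this is also the right justification for algebraic independence of the $\Psi(T_i)$. Hence $p$ is injective on every subring of its domain contained in $\kk(\Psi(T_i))$, in particular on $\mathcal A_T^{\mathfrak P}=\phi(\mathcal A_T)$.
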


\begin{proof}
By \cite[Theorem~7.9]{JRS26}, $\mathscr A_\Psi=\mathcal A_T$, and
$p$ is injective on both $\mathscr A_\Psi$ and
$\mathscr A_{\mathfrak P}$. Set $\theta=\tw_L^*$. Formula
\eqref{eq:partition-image} gives
$\theta\bigl(p(\mathscr A_\Psi)\bigr)=p(\mathscr A_{\mathfrak P})$.
On the initial seed, $\theta$ sends the character coordinates to the
partition coordinates, including the frozen ones. Since a field
automorphism commutes with every exchange relation, induction on a
mutation word gives
$\theta\bigl(p(\mathcal A_T)\bigr)=p(\mathcal A_T^{\mathfrak P})$. 
The equality $\mathscr A_\Psi=\mathcal A_T$ identifies these two
images. The cluster-character property gives
$\mathcal A_T^{\mathfrak P}\subseteq\mathscr A_{\mathfrak P}$, so
injectivity of $p$ on the latter proves their equality.

We can therefore define
\[ \widetilde{\tw}_L=(p|_{\mathscr A_{\mathfrak P}})^{-1}\circ\theta\circ(p|_{\mathscr A_\Psi}), \]
where the inverse is taken on the image of the restricted map.
This is an algebra isomorphism, and \eqref{eq:partition-image} gives
its value on every $\Psi(M)$. Injectivity of $p$ also shows that
\eqref{eq:partition-intertwining} characterizes it uniquely.
\end{proof}

This identifies the seed isomorphism of \cite[Theorem~7.9]{JRS26}
with the geometric twist after projection and determines it on every
character. The partition-algebra equality removes the sufficient
reachability condition in \cite[Remark~7.13]{JRS26}; it does not prove
reachability itself. The two algebras need not be the same subalgebra
of the ambient function field, and their frozen generators are reciprocal.

\section{Tropical cluster characters and Auslander--Reiten translation}\label{sec:tropical}

We use the maximum convention of \cite[Sections~3.1 and 6.2]{FeiT23}.
We write $\omega\in\RR^n$ for a weight on the cluster-coordinate
exponent lattice, and $\delta\in\RR^{Q_0}$ for a weight on the
quiver dimension-vector lattice. In the pairing formulas,
$\gamma\in\ZZ^{Q_0}$ is an exponent vector, not an evaluation weight.

\subsection{Tropicalization and character identities}
For a nonzero Laurent polynomial $G=\sum_a c_ax^a$, put
\[ G^{\mathrm{trop}}(\omega)=\max_{c_a\ne0}\omega(a). \]
This is the \emph{support function} of the \emph{Newton polytope}
$\Newt(G)$, the convex hull of the exponents of $G$. For a nonzero rational
function $G=P/Q$, we set
$G^{\mathrm{trop}}=P^{\mathrm{trop}}-Q^{\mathrm{trop}}$.
The identity
$(PQ)^{\mathrm{trop}}=P^{\mathrm{trop}}+Q^{\mathrm{trop}}$
makes this well-defined. These functions are
continuous and piecewise linear; if $G_1+G_2\ne0$, then
\[ (G_1+G_2)^{\mathrm{trop}}\leq\max(G_1^{\mathrm{trop}},G_2^{\mathrm{trop}}), \]
with equality wherever the two terms have different degrees.

We write
$\CC^{\mathrm{trop}}_M:=(\CC(M))^{\mathrm{trop}}$
for the tropical cluster character. For a triangulated character, we
keep the reference object and write
$\CC^{\mathrm{trop}}_{T,L}:=\CC_T(L;x)^{\mathrm{trop}}$.
If $\CC(M)=0$, we set
$\CC^{\mathrm{trop}}_M=-\infty$; statements about Newton polytopes
concern nonzero characters.

A rational function is subtraction-free if it is a ratio of nonzero
Laurent polynomials with positive real coefficients. For such expressions,
tropicalization replaces multiplication, division, and addition by addition,
subtraction, and maximum, respectively, and commutes with substitution.
The following lemma allows arbitrary coefficients in the function being
transformed.

\begin{lemma}\label{lem:tropical-composition}
Let $\phi$ be a $\mathbb C$-automorphism of
$\mathbb C(x_1,\ldots,x_n)$. Put $q_i=\phi(x_i)$, $r_i=\phi^{-1}(x_i)$,
$q=(q_1,\ldots,q_n)$, $r=(r_1,\ldots,r_n)$, and
\[ D(\omega)=(q_1^{\mathrm{trop}}(\omega),\ldots,q_n^{\mathrm{trop}}(\omega)),\qquad \widehat D(\omega)=(r_1^{\mathrm{trop}}(\omega),\ldots,r_n^{\mathrm{trop}}(\omega)). \]
If $D$ is a homeomorphism of $\RR^n$, then
\begin{equation}\label{eq:tropical-composition} (\phi(G))^{\mathrm{trop}}(\omega)=G^{\mathrm{trop}}(D(\omega))\end{equation}
for every nonzero rational function $G$. Moreover, $D,\widehat D$ are inverse
integral piecewise-linear homeomorphisms. Their linear parts on
full-dimensional chambers are \emph{unimodular}.
In particular, these conclusions hold whenever all the coordinates
$q_i,r_i$ are subtraction-free.
\end{lemma}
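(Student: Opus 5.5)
The plan is to reduce \eqref{eq:tropical-composition} to a statement about initial forms at generic weights, and then pass to all $\omega$ by continuity. Both sides of \eqref{eq:tropical-composition} are additive under products and quotients, so it suffices to treat a nonzero Laurent polynomial $G=\sum_a c_ax^a$. Then $\phi(G)=\sum_a c_aq^a$.

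First I would record what the homeomorphism hypothesis gives.

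\emph{Step 1 (generic weights).} Each $q_i^{\mathrm{trop}}$ is piecewise linear, so there is an open dense set $U\subseteq\RR^n$ on which all $q_i^{\mathrm{trop}}$ are locally linear. For $\omega\in U$ the corresponding faces of the Newton polytopes of the numerator and denominator of each $q_i$ are vertices. Hence the $\omega$-initial form of $q_i$ is a monomial $\kappa_ix^{L_i}$ with $\kappa_i\ne0$, and $D$ is locally the linear map with integer matrix $L$ whose rows are the $L_i$.

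\emph{Step 2 ($L$ is nonsingular).} Since $D$ is a homeomorphism, invariance of domain shows that $D$ cannot be given by a singular linear map on an open set. After shrinking $U$ to the union of open chambers, $L$ is invertible at every $\omega\in U$.

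\emph{Step 3 (no cancellation).} Fix $\omega\in U$. The $\omega$-initial form of $q^a$ is $\kappa^ax^{aL}$. Since $a\mapsto aL$ is injective, distinct exponents $a$ give distinct monomials. Consequently the terms of $\phi(G)$ whose $\omega$-degree equals $\max_a\omega(aL)$ have initial forms that cannot cancel. This gives $(\phi G)^{\mathrm{trop}}(\omega)=\max_{c_a\ne0}D(\omega)(a)=G^{\mathrm{trop}}(D(\omega))$. Both sides are continuous, so equality extends from $U$ to all of $\RR^n$, proving \eqref{eq:tropical-composition}.

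Next I would obtain the remaining assertions from \eqref{eq:tropical-composition} itself.
\begin{itemize}
\item Applying it to $G=r_i$, for which $\phi(r_i)=x_i$, gives $\omega_i=r_i^{\mathrm{trop}}(D(\omega))$. Thus $\widehat D\circ D=\mathrm{id}$.
\item Since $D$ is bijective, $\widehat D=D^{-1}$ is also a homeomorphism. Both maps are integral piecewise linear, being tuples of tropicalizations of rational functions.
\item On a full-dimensional chamber of $D$ whose image meets a chamber of $\widehat D$ in an open set, the two integer linear parts compose to the identity. Hence both are unimodular.
\item For the subtraction-free case: tropicalization commutes with substitution, so $\widehat D\circ D$ and $D\circ\widehat D$ are the tropicalizations of $\phi^{-1}(\phi(x_i))=x_i$ and $\phi(\phi^{-1}(x_i))=x_i$. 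Thus $D$ is a homeomorphism with inverse $\widehat D$, and the first part applies.
\end{itemize}

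The main obstacle is Step 3, namely excluding cancellation of leading terms after substitution when $G$ has arbitrary complex coefficients. The argument depends on two facts at generic weights: the initial forms of the $q_i$ are monomials, and their exponent matrix is nonsingular. I expect the delicate point to be the invariance-of-domain argument in Step 2. It must produce an open dense set of weights where the linear part is invertible, not merely some such point. If that fails, I would instead argue with the monomial valuation $\nu_\omega\circ\phi$. This valuation satisfies $\nu_\omega\circ\phi\le\nu_{D(\omega)}$ on Laurent polynomials. I would then compare it with $\nu_{D(\omega)}\circ\phi^{-1}$ to force equality.
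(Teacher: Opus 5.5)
Your argument is correct and follows the paper's overall route. Both proofs get no cancellation at generic weights and then pass to all weights by continuity. Both apply the identity to $G=r_i$ to get $\widehat D\circ D=\mathrm{id}$, read off unimodularity from inverse integer linear parts on common chambers, and tropicalize $r(q(x))=x$ and $q(r(x))=x$ in the subtraction-free case.

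The difference lies in the no-cancellation step.
\begin{itemize}
\item The paper chooses $\omega$, depending on $G$, so that $D(\omega)$ avoids the hyperplanes $\eta(a-b)=0$. Then one term $c_aq^a$ has strictly largest degree. The only inputs are that the $\omega$-degree is a valuation and that a homeomorphism pulls dense open sets back to dense sets.
\item You allow ties among leading terms. You use the homeomorphism to make the local exponent matrix $L$ invertible, so that tied leading terms are distinct monomials and cannot cancel.
\end{itemize}
Your version costs an extra step. In return, the generic set $U$ is independent of $G$, and you identify the initial form of $\phi(G)$ as the monomial image of the initial form of $G$ at $D(\omega)$, which is slightly more information.

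One correction is needed in Step 1. Local linearity of $q_i^{\mathrm{trop}}$ does not force the initial form of $q_i$ to be a monomial, because the nonlinearities of the numerator and denominator can cancel. For example, $(x+y)/(x-y)$ has tropicalization $0$ everywhere, yet at $u=v$ its initial forms are $x+y$ and $x-y$. Define $U$ instead as the complement of the nonlinearity loci of all the numerators and denominators, taken separately. This set is still open and dense, and Steps 2 and 3 go through unchanged.

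Invariance of domain is also unnecessary in Step 2: a homeomorphism of $\RR^n$ onto itself is already an open map, so it cannot be given by a singular linear map on an open set.
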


\begin{proof}
Assume first that $D$ is a homeomorphism, and let
$G=\sum_a c_ax^a$ be a Laurent polynomial. Outside the finitely many
hyperplanes $\eta(a-b)=0$, with $a,b$ distinct exponents of $G$,
one monomial has strictly largest $\eta$-degree. The inverse image of
this set under $D$ is dense. At each point of that inverse image, one
term of $\sum_a c_aq^a$ has strictly largest degree and cannot cancel.
This proves \eqref{eq:tropical-composition} on a dense set, and continuity
proves it everywhere. We obtain the rational case by applying the result
to a numerator and a denominator.

Taking $G=r_i$ in \eqref{eq:tropical-composition} gives
$\widehat D\circ D=\mathrm{id}$. Since $D$ is a homeomorphism,
$\widehat D=D^{-1}$. Both maps are integral and piecewise linear. After a
common refinement of their chambers, their linear parts are inverse
integer matrices, hence unimodular.

Finally, suppose that all $q_i,r_i$ are subtraction-free. We
can tropicalize the identities $r(q(x))=x$ and $q(r(x))=x$ to
obtain $\widehat D\circ D=D\circ \widehat D=\mathrm{id}$. Thus $D$ is a
homeomorphism, and the first part applies.
\end{proof}

Subtraction-free coordinates in one direction alone do not suffice:
for $\phi(x)=x+y$, $\phi(y)=y$, we have
\[ D(u,v)=(\max(u,v),v),\qquad (\phi(x-y))^{\mathrm{trop}}(u,v)=u,\qquad (x-y)^{\mathrm{trop}}(D(u,v))=\max(u,v). \]
Here the inverse uses subtraction, and $D$ collapses the region $u<v$.

We now apply the lemma to the character transformations of
Theorem~\ref{thm:main}. Write
\[ \CC(P_M)=x^{p_M},\qquad \CC(I_M)=x^{i_M}, \]
where $p_M,i_M$ are supported on the frozen vertices.

\Needspace{12\baselineskip}
\begin{corollary}\label{cor:tropical-syzygy}
Define
\[ D_-(\omega)_i=\CC^{\mathrm{trop}}_{\Omega T_i}(\omega)-\omega(p_{T_i}),\qquad D_+(\omega)_i=\CC^{\mathrm{trop}}_{\Omega^{-1}T_i}(\omega)-\omega(i_{T_i}). \]
If either $D_-$ or $D_+$ is a homeomorphism, then they are inverse integral
piecewise-linear homeomorphisms, with $D_\pm(\omega)_f=-\omega_f$ at
each frozen vertex. For every object $M$,
\begin{equation}\label{eq:tropical-syzygy} \CC^{\mathrm{trop}}_{\Omega M}(\omega)=\CC^{\mathrm{trop}}_M(D_-(\omega))+\omega(p_M),\qquad \CC^{\mathrm{trop}}_{\Omega^{-1}M}(\omega)=\CC^{\mathrm{trop}}_M(D_+(\omega))+\omega(i_M).\end{equation}
\end{corollary}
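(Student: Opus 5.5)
This is a direct application of Lemma~\ref{lem:tropical-composition} to the automorphism $\phi=\mathsf t_-$ of Theorem~\ref{thm:main}, whose inverse is $\mathsf t_+$. Write $q_i=\mathsf t_-(x_i)=\CC^-(T_i)=\CC(\Omega T_i)/x^{p_{T_i}}$ and $r_i=\mathsf t_+(x_i)=\CC^+(T_i)=\CC(\Omega^{-1}T_i)/x^{i_{T_i}}$. These are nonzero, since $\mathsf t_\pm$ are field automorphisms.

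\textbf{Identifying the coordinate maps.} Tropicalization is additive on products and quotients, and the tropicalization of a monomial $x^{p}$ is the linear form $\omega(p)$. Hence $q_i^{\mathrm{trop}}(\omega)=\CC^{\mathrm{trop}}_{\Omega T_i}(\omega)-\omega(p_{T_i})$. So the map $D$ of the lemma for $\phi=\mathsf t_-$ is exactly $D_-$, and its $\widehat D$ is $D_+$. Symmetrically, for $\phi=\mathsf t_+$ the roles are exchanged: $D=D_+$ and $\widehat D=D_-$. This symmetry is what lets us assume that \emph{either} map is a homeomorphism.

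\textbf{Inverse homeomorphisms.} If $D_-$ is a homeomorphism, Lemma~\ref{lem:tropical-composition} with $\phi=\mathsf t_-$ shows that $D_-$ and $D_+$ are inverse integral piecewise-linear homeomorphisms. If instead $D_+$ is one, apply the lemma with $\phi=\mathsf t_+$; it returns the same conclusion. At a frozen vertex, $q_f=r_f=x_f^{-1}$ by Theorem~\ref{thm:main}, so $D_\pm(\omega)_f=-\omega_f$. Equivalently, use $\Omega T_f=0$ and $p_{T_f}=\mathbf e_f$.

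\textbf{Transport of characters.} For an object $M$ with $\CC(M)\neq0$, take $G=\CC(M)$ in \eqref{eq:tropical-composition}. By \eqref{eq:objectwise}, $\mathsf t_-(\CC(M))=\CC(\Omega M)x^{-p_M}$. Tropicalizing gives
\[ \CC^{\mathrm{trop}}_{\Omega M}(\omega)-\omega(p_M)=\CC^{\mathrm{trop}}_M(D_-(\omega)), \]
which is the first identity in \eqref{eq:tropical-syzygy}. The second identity follows the same way from $\mathsf t_+$, whose map $D=D_+$ is now known to be a homeomorphism.

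The only point needing care is zero characters. If $\CC(M)=0$, then $\CC(\Omega M)=\mathsf t_-(0)\,x^{p_M}=0$, so both sides of \eqref{eq:tropical-syzygy} equal $-\infty$ under the paper's convention. The case of $\CC(\Omega^{-1}M)$ is handled the same way.

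I expect no real obstacle. The substantive content, namely that no cancellation changes leading degrees, is already in Lemma~\ref{lem:tropical-composition}. What remains is bookkeeping: matching $D$ with $D_-$ and keeping track of the frozen monomial shifts.
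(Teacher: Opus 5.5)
Your proposal is correct and follows the paper's proof: you identify $D_-$ and $D_+$ as the tropical coordinate maps of the inverse automorphisms $\mathsf t_-$ and $\mathsf t_+$, apply Lemma~\ref{lem:tropical-composition} to whichever map is a homeomorphism, and then tropicalize \eqref{eq:objectwise} in both directions. Your treatment of the frozen coordinates via $\mathsf t_\pm(x_f)=x_f^{-1}$ and of zero characters via the $-\infty$ convention also matches the paper.
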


\begin{proof}
The maps $D_-$ and $D_+$ are the tropicalizations of the inverse field
automorphisms $\mathsf t_-$ and $\mathsf t_+$. Apply
Lemma~\ref{lem:tropical-composition} to whichever of $D_-$ or $D_+$ is a
homeomorphism. The lemma identifies the other map with its inverse, so it
applies in both directions to \eqref{eq:objectwise} and gives
\eqref{eq:tropical-syzygy}. The frozen-coordinate assertion follows from
$\mathsf t_\pm(x_f)=x_f^{-1}$. If $\CC(M)=0$, the corresponding
character identity makes both translated characters zero, so the formulas
hold with our convention.
\end{proof}

\begin{remark}\label{rem:geometric-tropical}
Corollary~\ref{cor:tropical-syzygy} applies to all three geometric twists in
Theorem~\ref{thm:geometric}. For positroids, the twist and its inverse
are quasi-cluster automorphisms of the source structure
\cite[Theorem~7.2 and Proposition~7.3]{Pre}. Their initial coordinate
images are cluster variables times frozen Laurent monomials, so both
coordinate substitutions admit subtraction-free expressions obtained by
mutation. This includes the Grassmannian case. For unipotent cells, the
syzygy cluster is reachable \cite[Section~4.1]{GLS12}; applying inverse
suspension to a mutation path gives reachability of the cosyzygy cluster as
well. In each case Lemma~\ref{lem:tropical-composition} therefore verifies
the homeomorphism hypothesis of Corollary~\ref{cor:tropical-syzygy}. We
use reachability only for this coordinate calculation: the object $M$ is
arbitrary, and its character need not have positive coefficients.
\end{remark}

Formula \eqref{eq:tropical-syzygy} determines the Newton polytopes of
the translated characters through their support functions; it does not
assert that a single linear map carries one Newton polytope to the other.

\begin{example}[A tropical involution of the Markov quiver]
For $\omega=(\omega_1,\omega_2,\omega_3)$, write
\[ r(\omega)=\kappa^{\mathrm{trop}}(\omega)=2\max(\omega_1,\omega_2,\omega_3)-\omega_1-\omega_2-\omega_3. \]
Tropicalizing \eqref{eq:markov-theta}, we obtain
$D(\omega)=\omega+2r(\omega)\mathbf1$ where $\mathbf1=(1,1,1)$. 
Adding the same number to each coordinate preserves the largest-coordinate
chamber, and $r(D(\omega))=-r(\omega)$. Hence $D^2=\mathrm{id}$.
On the chamber where $\omega_k$ is largest, its matrix is
$\mathrm{Id}_3+2(2\mathbf e_k^{\mathsf t}-\mathbf1^{\mathsf t})\mathbf1$, an integral
reflection of determinant $-1$.

The rational coordinates \eqref{eq:markov-ratio-coordinates} give the
piecewise-linear coordinates
\[ a=\omega_1-\omega_3,\qquad b=\omega_2-\omega_3,\qquad r=r(\omega). \]
We recover the original weights from
\[ \omega_3=2\max(a,b,0)-a-b-r,\qquad \omega_1=a+\omega_3,\qquad \omega_2=b+\omega_3. \]
Thus the coordinate change and its inverse are integral and piecewise
linear. In these coordinates, \eqref{eq:markov-inversion} becomes simply
$(a,b,r)\mapsto(a,b,-r)$.
The two ratio weights are fixed and the weight of $h$ changes sign.

Since $D^2=\mathrm{id}$, Lemma~\ref{lem:tropical-composition} and
\eqref{eq:markov-covariance} give
$\CC^{\mathrm{trop}}_{T,\Sigma L}(\omega)=\CC^{\mathrm{trop}}_{T,L}(D(\omega))$
for every object of the Markov cluster category. This is the analogue of Corollary~\ref{cor:tropical-syzygy} in a
case where the shifted cluster is not reachable.
\end{example}
\subsection{Tropical \texorpdfstring{$F$}{F}-polynomials and Auslander--Reiten translation}\label{subsec:tropical-AR}
For the Jacobian algebra of Theorem~\ref{thm:AR}, we use the notation of
\cite[Definitions~3.1--3.2]{FeiT23}:
\[ f_M(\delta)=\max_{L\hookrightarrow M}\delta(\dv L),\qquad \check f_M(\delta)=\max_{M\twoheadrightarrow N}\delta(\dv N). \]
Here $\delta\in\RR^{Q_0}$; submodules and quotients refer to
$M^{\mathrm{ord}}$. Their Newton polytopes satisfy
\[ \NP(M)=\op{conv}\{\dv L:L\hookrightarrow M\},\qquad \check\NP(M)=\dv M-\NP(M). \]
By \cite[Theorem~4.2]{FeiC23}, $\NP(M)=\Newt(F_M)$. Thus
$f_M=f_{F_M}$ and $\check f_M=f_{\check F_M}$, without positivity
of their coefficients. We have
\[ f_M(\delta)-\check f_M(-\delta)=\delta(\dv M),\qquad \mathbf f_I(\delta)=(f_{I_1}(\delta),\ldots,f_{I_m}(\delta)). \]

We write $\hom(L,M)$ for Hom dimension between ordinary
parts. The following formulas describe the change of $f_M$ and
$\hom(L,M)$ under translation.

\begin{proposition}\label{prop:AR-transformations}\label{cor:tropical-AR}\label{prop:hom-AR}
\textup{(a)} Define
\[ \mathcal D(\delta)(i)=-\delta(i)+\sum_jb_{ij}f_{I_j}(\delta),\qquad
\widehat{\mathcal D}(\delta)(i)=-\delta(i)+\sum_jb_{ij}\check f_{P_j}(-\delta). \]
These are the tropicalizations of $\psi$ and $\rho$, respectively.
If $\mathcal D$ is a homeomorphism of $\RR^{Q_0}$, then
$\widehat{\mathcal D}=\mathcal D^{-1}$, and both are integral
piecewise-linear homeomorphisms. For every decorated representation
$M$ and every $\delta\in\RR^{Q_0}$,
\begin{equation}\label{eq:tropical-AR} f_{\tau M}(\delta)-f_M(\mathcal D(\delta))=\delta(\dv M)-\check\delta_M\bigl(\mathbf f_I(\delta)\bigr).\end{equation}
\textup{(b)} For any decorated representations $L,M$, with no
hypothesis on $\mathcal D$,
\begin{equation}\label{eq:hom-AR} \hom(\tau L,\tau M)-\hom(L,M)=\delta_{\tau L}(\dv M)-\check\delta_M(\dv\tau L).\end{equation}
\end{proposition}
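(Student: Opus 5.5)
For part (a) I would tropicalize the identity \eqref{eq:AR} of Theorem~\ref{thm:AR} through Lemma~\ref{lem:tropical-composition}, transported from cluster coordinates to the $z$-variables. Regard $\psi$ as the $\QQ$-automorphism of $\QQ(z_1,\ldots,z_m)$ given by $z_i\mapsto\psi_i(z)$; it is an automorphism by Proposition~\ref{prop:AR-inverse}, with inverse $z_i\mapsto\rho_i(z)$. Its tropical coordinate map is
\[ \psi_i^{\mathrm{trop}}(\delta)=-\delta(i)+\sum_jb_{ij}F_{I_j}^{\mathrm{trop}}(\delta). \]
Since $f_{I_j}=f_{F_{I_j}}$ by \cite[Theorem~4.2]{FeiC23}, with no positivity needed, this is exactly $\mathcal D$. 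In the same way $\check F_{P_j}(z^{-1})^{\mathrm{trop}}(\delta)=\check f_{P_j}(-\delta)$, so $\rho^{\mathrm{trop}}=\widehat{\mathcal D}$.

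If $\mathcal D$ is a homeomorphism, Lemma~\ref{lem:tropical-composition} gives $\widehat{\mathcal D}=\mathcal D^{-1}$. It also shows that both maps are integral piecewise-linear homeomorphisms, and that $(\psi(G))^{\mathrm{trop}}=G^{\mathrm{trop}}\circ\mathcal D$ for every nonzero $G$. Apply this to $G=F_M$, which is nonzero because its constant term is $1$. Then tropicalize the right side of \eqref{eq:AR} multiplicatively:
\[ -\delta(\dv M)+\sum_i\check\delta_M(i)f_{I_i}(\delta)+f_{\tau M}(\delta). \]
Rearranging gives \eqref{eq:tropical-AR}. The only subtle point is that tropicalization of products and quotients is additive regardless of coefficient signs, and \eqref{eq:AR} is a product identity on its right side. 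So the homeomorphism hypothesis is needed only on the left side.

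For part (b) no tropicalization is involved. Interpreting $\hom(L,M)$ through the cluster category, I would use $H=\C(T,-)$ and the fact, shown in the proof of Theorem~\ref{thm:AR}, that $\Sigma$ realizes $\tau$ on decorated representations. The plan is to express $\hom(L,M)$ and $\hom(\tau L,\tau M)$ by the standard presentation formulas:
\[ \hom(L,M)-\hom(M,\tau L)=\delta_L(\dv M)-\ldots, \]
that is, the Auslander--Reiten formula $\hom(L,M)=\delta_L(\dv M)+\hom(M,\tau L)$ of Derksen--Weyman--Zelevinsky type, valid for decorated weights, together with its dual $\hom(L,M)=\check\delta_M(\dv L)+\hom(\tau^{-1}M,L)$. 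Apply the first formula to the pair $(\tau L,\tau M)$:
\[ \hom(\tau L,\tau M)=\delta_{\tau L}(\dv\tau M)+\hom(\tau M,\tau^2L). \]
That is inconvenient, so instead I would apply the first formula with first argument $\tau L$ and second argument $M$. This gives
\[ \hom(\tau L,M)=\delta_{\tau L}(\dv M)+\hom(M,\tau^2L)\cdots. \]
Cleaner: use the dual formula with $(L,M)\to(\tau L,\tau M)$, namely $\hom(\tau L,\tau M)=\check\delta_{\tau M}(\dv\tau L)+\hom(M,\tau L)$. Combine it with the first formula, $\hom(L,M)=\delta_L(\dv M)+\hom(M,\tau L)$, and subtract. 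The term $\hom(M,\tau L)$ cancels, and the remaining linear terms are converted by \eqref{eq:app-weights}. Those give $\check\delta_{\tau M}=-\delta_M$, together with the analogous relation $\delta_{\tau L}$ versus $\check\delta_L$ coming from the Nakayama functor, and the pairing identity $\delta_L(\dv M)-\check\delta_M(\dv L)$ obtained from the $\langle-,-\rangle$ form. The main obstacle is getting these Hom formulas right for decorated representations with projective or negative summands, and matching signs so that the result is exactly $\delta_{\tau L}(\dv M)-\check\delta_M(\dv\tau L)$. I would verify the final linear identity on the test cases $L=P_i$ and $M=(0,\mathbf e_j)$ to fix conventions.
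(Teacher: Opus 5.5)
Your proposal follows the paper's proof. In (a) you identify $\mathcal D$ and $\widehat{\mathcal D}$ as the tropicalizations of $\psi$ and $\rho$ via $\NP(M)=\Newt(F_M)$, then apply Lemma~\ref{lem:tropical-composition} to \eqref{eq:AR}; in (b) you subtract the two presentation forms of AR duality so that $\hom(M,\tau L)$ cancels.

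For the final linear step in (b), use exactly $\check\delta_{\tau M}=-\delta_M$, $\delta_L+\delta_{\tau L}=-(\dv\tau L)B$ and $\check\delta_M=\delta_M+(\dv M)B$ from \eqref{eq:app-weights}, followed by skew-symmetry of $B$. No further pairing identity is needed. The relation $\delta_{\tau L}=-\check\delta_L$ that your wording suggests is false in general: for $L=P_i$ the two sides differ by $\dv P_i\,B$.
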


\begin{proof}
For (a), the identities $f_M=f_{F_M}$ and
$\check f_M=f_{\check F_M}$ above give
\[ F_{I_j}^{\mathrm{trop}}=f_{I_j},\qquad
\check F_{P_j}^{\mathrm{trop}}=\check f_{P_j}. \]
Hence the displayed maps are the tropicalizations of $\psi$ and $\rho$.
By Proposition~\ref{prop:AR-inverse}, $\rho=\psi^{-1}$. If $\mathcal D$
is a homeomorphism, Lemma~\ref{lem:tropical-composition}
gives $\widehat{\mathcal D}=\mathcal D^{-1}$ and allows us to tropicalize
\eqref{eq:AR}, which gives \eqref{eq:tropical-AR}.

For (b), the projective presentation of $L$ and the injective
presentation of $\tau M$ give, by Nakayama duality,
\[
\begin{aligned}
\hom(L,M)-\delta_L(\dv M)&=\hom(M,\tau L),\\
\hom(\tau L,\tau M)-\check\delta_{\tau M}(\dv\tau L)&=\hom(M,\tau L).
\end{aligned}
\]
These are the presentation forms of AR duality, including negative
decorations \cite[Lemma~2.3]{FeiT23}. We subtract the two equalities
and use $\check\delta_{\tau M}=-\delta_M$ to obtain
\[ \hom(\tau L,\tau M)-\hom(L,M)=-\delta_L(\dv M)-\delta_M(\dv\tau L). \]
Substituting $\delta_L+\delta_{\tau L}=-(\dv\tau L)B$ and
$\check\delta_M=\delta_M+(\dv M)B$ from \eqref{eq:app-weights},
we obtain \eqref{eq:hom-AR} by skew-symmetry of $B$.
\end{proof}

No positivity hypothesis on the coefficients of the $F$-polynomials is
needed in Proposition~\ref{prop:AR-transformations}(a). Their Newton
polytopes determine the tropicalizations above. Moreover, the
coefficients at the vertices are equal to $1$. The homeomorphism
hypothesis on $\mathcal D$ is used only to prevent cancellation under
composition, as in Lemma~\ref{lem:tropical-composition}. For instance,
subtraction-free coordinate expressions for both $\psi$ and $\rho$ are
a sufficient condition for this hypothesis, but are not part of the
statement.

\subsubsection*{The invariant pairings}

When the coweight of $M$ lies in the image of $B$, the injective
$F$-polynomial factor in \eqref{eq:AR} can be absorbed into a monomial
normalization of $F_M$. Let $\check\delta_M=\gamma B$, with $\gamma\in\ZZ^{Q_0}$, and put
$\gamma'=\dv M-\gamma$.
Multiplying \eqref{eq:AR} by
$\psi(z)^{-\gamma}=z^\gamma F_I(z)^{-\gamma B}$ cancels the
injective factors:
\begin{equation}\label{eq:normalized-F} \left(y^{-\gamma}F_M(y)\right)\big|_{y=\psi(z)}=z^{-\gamma'}F_{\tau M}(z).\end{equation}
Under the hypotheses of Proposition~\ref{prop:AR-transformations}(a),
Lemma~\ref{lem:tropical-composition} applied to \eqref{eq:normalized-F}
gives
\begin{equation}\label{eq:invariant-pairing} f_M(\mathcal D(\delta))-\mathcal D(\delta)(\gamma)=f_{\tau M}(\delta)-\delta(\gamma').\end{equation}
We now pass from individual representations to their principal
components.

For an integral weight $\eta$, a decorated representation is \emph{general of
weight} $\eta$ if its decorated presentation is general in
\[ \op{PHom}(\eta):=\Hom_J(P([-\eta]_+),P([\eta]_+)). \]
We write $\dv(\eta)$ for its ordinary dimension vector. For a
coweight $\check\eta$, we use
$\op{IHom}(\check\eta):=\Hom_J(I([\check\eta]_+),I([-\check\eta]_+))$,
and write $\dv(\check\eta)$ for the ordinary dimension vector of a
general representation obtained from this space.
The resulting irreducible components of decorated representation varieties
are called \emph{principal components}.
The principal-component theorem \cite[Theorem~3.11]{FeiR25} identifies the
projective and injective components when
$\check\eta=\eta+\dv(\eta)B$, and shows that $\tau$ preserves
general representations. At the level of presentations, it is induced by
the Nakayama functor
\[ \op{PHom}(\eta)\xrightarrow{\ \nu\ }\op{IHom}(-\eta). \]
Applying this isomorphism to products of presentation spaces also
preserves general pairs. We also write $\tau$ for the induced
translation of weights and coweights. Formula \eqref{eq:app-weights} gives
\begin{equation}\label{eq:generic-weight-AR} \tau^{-1}\eta=-\eta-\dv(\eta)B,\qquad \tau\check\eta=-\check\eta+\dv(\check\eta)B.\end{equation}

For a general representation of coweight $\check\eta$, we write
$f_{\check\eta}$ for its tropical $F$-polynomial
\cite[Section~6.1]{FeiT23}. Define the
\emph{tropical Fock--Goncharov pairing} and its Hom counterpart by
\[ (\gamma,\delta)_{\mathrm{FG}}=f_{\gamma B}(\delta)-\delta(\gamma),\qquad (\gamma,\delta)_{\mathrm{Hom}}=\hom(\delta,\gamma B)-\delta(\gamma). \]
Here $\gamma B$ is a coweight, and Hom dimension is taken on a general
pair in the indicated principal components. We do not assume equality
of the two pairings.

\begin{corollary}\label{cor:generic-pairing-AR}
For $\gamma\in\ZZ^{Q_0}$, put $\gamma'=\dv(\gamma B)-\gamma$.
Under the hypotheses of Proposition~\ref{prop:AR-transformations}(a), for every
$\delta\in\RR^{Q_0}$,
\begin{equation}\label{eq:generic-FG-AR} (\gamma',\mathcal D^{-1}(\delta))_{\mathrm{FG}}=(\gamma,\delta)_{\mathrm{FG}}.\end{equation}
With no hypothesis on $\mathcal D$, for every presentation weight
$\delta\in\ZZ^{Q_0}$,
\begin{equation}\label{eq:generic-Hom-AR} (\gamma',\tau\delta)_{\mathrm{Hom}}=(\gamma,\delta)_{\mathrm{Hom}}.\end{equation}
\end{corollary}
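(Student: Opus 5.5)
The plan is to derive both identities by specializing the representation-level statements already proved to general representations. The translation $\tau$ is then tracked on weights and coweights via \eqref{eq:generic-weight-AR} and the principal-component theorem \cite[Theorem~3.11]{FeiR25}.

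\emph{The Fock--Goncharov identity.} Let $M$ be general of coweight $\gamma B$, so $\check\delta_M=\gamma B$ and $\dv M=\dv(\gamma B)$. Then $\gamma'=\dv M-\gamma$ is exactly the vector appearing in \eqref{eq:normalized-F}. By the principal-component theorem, $\tau M$ is general, and \eqref{eq:generic-weight-AR} gives its coweight:
\[ \tau(\gamma B)=-\gamma B+\dv(\gamma B)B=\gamma'B. \]
Hence $f_M=f_{\gamma B}$ and $f_{\tau M}=f_{\gamma'B}$. Substituting these into \eqref{eq:invariant-pairing}, which holds under the hypotheses of Proposition~\ref{prop:AR-transformations}(a), gives
\[ (\gamma,\mathcal D(\delta))_{\mathrm{FG}}=(\gamma',\delta)_{\mathrm{FG}}\qquad(\delta\in\RR^{Q_0}). \]
Because $\mathcal D$ is a homeomorphism, we may replace $\delta$ by $\mathcal D^{-1}(\delta)$, and this yields \eqref{eq:generic-FG-AR}.

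\emph{The Hom identity.} Take a general pair: $L$ general of weight $\delta$ and $M$ general of coweight $\gamma B$. Applying $\nu$ to the product of presentation spaces preserves general pairs, so $(\tau L,\tau M)$ is a general pair. It has weight $\tau\delta$ and coweight $\gamma'B$, as computed above. Therefore
\[ (\gamma',\tau\delta)_{\mathrm{Hom}}=\hom(\tau L,\tau M)-\tau\delta(\gamma'),\qquad (\gamma,\delta)_{\mathrm{Hom}}=\hom(L,M)-\delta(\gamma). \]
Next I apply \eqref{eq:hom-AR}, which needs no hypothesis on $\mathcal D$. Writing $d'=\dv\tau L$, $\delta_{\tau L}=\tau\delta$, and $\dv M=\gamma+\gamma'$, the identity \eqref{eq:generic-Hom-AR} reduces to the linear relation
\[ \tau\delta(\gamma)-(\gamma B)(d')=-\delta(\gamma). \]
This relation follows from \eqref{eq:generic-weight-AR} applied to the weight $\tau\delta$, namely $\delta=-\tau\delta-d'B$. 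Evaluating at $\gamma$ and using skew-symmetry of $B$, in the form $(d'B)(\gamma)=-(\gamma B)(d')$, gives
\[ \delta(\gamma)=-\tau\delta(\gamma)+(\gamma B)(d'), \]
which is the relation required.

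\emph{Main obstacle.} I expect the main difficulty to be bookkeeping rather than substance. One must justify that the general representations on both sides are the ones whose invariants define $f_{\gamma'B}$ and $\hom(\tau\delta,\gamma'B)$. This means checking that $\tau$ sends principal components to principal components with the stated weights and coweights, including decorated summands, and that it preserves general pairs. Both facts come from \cite[Theorem~3.11]{FeiR25} together with \eqref{eq:app-weights}. Once that is in place, the remaining sign checks with skew-symmetry are routine.
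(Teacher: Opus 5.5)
Your proposal is correct and follows the paper's proof: you specialize \eqref{eq:invariant-pairing} to a general $M$ of coweight $\gamma B$, use \eqref{eq:generic-weight-AR} to see that $\tau M$ is general of coweight $\gamma'B$, and evaluate at $\mathcal D^{-1}(\delta)$. For the Hom identity you likewise subtract the linear corrections from \eqref{eq:hom-AR} on a general pair and reduce to $(\tau\delta+\delta)(\gamma)=(\gamma B)(\dv(\tau\delta))$, which follows from \eqref{eq:generic-weight-AR} and skew-symmetry of $B$, exactly as in the paper.
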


\begin{proof}
If $M$ is general of coweight $\gamma B$, then $\tau M$ is general
of coweight $\gamma'B$, by \eqref{eq:generic-weight-AR}.
Evaluating \eqref{eq:invariant-pairing} at $\mathcal D^{-1}(\delta)$
gives \eqref{eq:generic-FG-AR}.

For the Hom identity, choose $L,M$ as a general pair, with $L$
of weight $\delta$. We subtract
$(\tau\delta)(\gamma')-\delta(\gamma)$ from \eqref{eq:hom-AR}.
Using $\gamma'=\dv(\gamma B)-\gamma$, the difference of the two
pairings reduces to
\[ (\tau\delta+\delta)(\gamma)-(\gamma B)(\dv(\tau\delta))=0, \]
by \eqref{eq:generic-weight-AR} and skew-symmetry of $B$.
This proves \eqref{eq:generic-Hom-AR}.
\end{proof}

We distinguish the tropical coordinate transformation $\mathcal D$ from
$\tau^{-1}$, the translation of generic presentation weights. Their
defining formulas are
\[ \mathcal D(\eta)=-\eta-\mathbf f_I(\eta)B,\qquad \tau^{-1}\eta=-\eta-\dv(\eta)B. \]
Thus they agree at every weight $\eta$ for which
$\mathbf f_I(\eta)=\dv(\eta)$. The preceding pairing identities do
not require this additional identification.

\begin{example}[A nonrigid family of the Markov quiver]
Put $s=\delta_1+\delta_2+\delta_3$. From
\eqref{eq:markov-A}--\eqref{eq:markov-F}, we obtain
\[ f_{A_i}(\delta)=\max\{0,\delta_i,\delta_i+2\delta_{i-1},2s-\delta_i,2s\},\qquad f_{I_i}(\delta)=2[s]_++f_{A_i}(\delta). \]
The common term cancels, so
\begin{equation}\label{eq:markov-tropical-y} \mathcal D(\delta)(i)=-\delta_i+2f_{A_{i+1}}(\delta)-2f_{A_{i-1}}(\delta).\end{equation}
By \eqref{eq:markov-psi}, this is an integral piecewise-linear involution
sending $s$ to $-s$. It fixes $s=0$ pointwise: substitute
$z_3=(z_1z_2)^{-1}$ in $\psi|_{q=1}=\mathrm{id}$ and tropicalize.
The compatibility in Section~\ref{subsec:markov-characters} becomes
\[ D(\omega)B^{\mathsf t}=\mathcal D(\omega B^{\mathsf t}). \]
The monomial map tropicalizes to $\omega\mapsto\omega B^{\mathsf t}$,
whose image is $s=0$. Since $\mathbf1 B^{\mathsf t}=0$, the
nontrivial displacement $D(\omega)-\omega=2r(\omega)\mathbf1$
disappears in these coordinates.

For $\lambda\in\mathbb C^\times$, let $R_\lambda$ have dimension
vector $(1,1,0)$, with $a_1=1$, $a_2=\lambda$, and all other
arrows zero. The projectives described in \cite[Example~4.3]{Pla13} give
\[ 0\longrightarrow R_\lambda\longrightarrow P_2\xrightarrow{\,a_2-\lambda a_1\,}P_1\longrightarrow R_\lambda\longrightarrow0. \]
To check the kernel, let $u_5^{(2)},v_5^{(2)}$ be the alternating
paths of length five from vertex $2$, starting with $b_1,b_2$,
and let $s_2$ be the common socle path. The kernel has basis
$\lambda u_5^{(2)}+v_5^{(2)}$ at vertex $1$ and $s_2$ at
vertex $2$, with arrow maps $1,\lambda$. The symmetrizing form
in Section~\ref{subsec:markov-characters} identifies the Nakayama
transform with the same multiplication map. Hence
\[ \tau R_\lambda\simeq R_\lambda,\qquad \check\delta_{R_\lambda}=(-1,1,0). \]
Self-extensions are supported on vertices $1,2$, where the relations
impose no conditions. The usual two-arrow calculation gives
$\dim\Ext^1(R_\lambda,R_\lambda)=1$, so this family is nonrigid.

The three submodule dimension vectors are $0,(0,1,0),(1,1,0)$. Thus
\[ F_{R_\lambda}(z)=1+z_2+z_1z_2,\qquad \CC_T(R_\lambda;x)=\kappa x_3. \]
Here the character is that of a lift to the cluster category. In the
coordinates \eqref{eq:markov-ratio-coordinates}, we have
\[ \CC_T(R_\lambda;x)=\frac{u^2+v^2+1}{uv}. \]
It is independent of $h$, so \eqref{eq:markov-inversion} fixes this
character, as required by $\tau R_\lambda\simeq R_\lambda$.
Before specialization, the AR identity is
\[ 1+\psi_2+\psi_1\psi_2=\frac{A_2}{z_1z_2A_1}(1+z_2+z_1z_2). \]
We verify it by substitution from \eqref{eq:markov-A}. With
$f_{R_\lambda}(\delta)=\max(0,\delta_2,\delta_1+\delta_2)$,
its tropical form is
\[ f_{R_\lambda}(\mathcal D(\delta))=f_{R_\lambda}(\delta)-\delta_1-\delta_2-f_{A_1}(\delta)+f_{A_2}(\delta). \]
Together with \eqref{eq:markov-tropical-y}, this verifies the tropical
translation formula for the nonrigid family $R_\lambda$.
\end{example}

\appendix
\section{Conventions and disconnected positroids}\label{app:positroid}\label{sec:gluing}

For a source Postnikov cluster-tilting object $T=\bigoplus_iL_i$,
with $x_i=\Phi_T(L_i)$, we prove the initial-coordinate identity
\[ \tw_L^*(x_i)=\frac{\Phi_T(\Omega L_i)}{\Phi_T(P_{L_i})} \]
for disconnected positroids. Corollary~\ref{cor:seed} then gives the
formula on every character. We first compare the circle conventions,
and then prove that the initial identity is preserved under gluing.

\subsection{The circle conventions}
Our source convention in Section~\ref{sec:geometric} has
$\mathsf y^k=\mathsf x^{n-k}$, with $\mathsf x_i=t$ on $L_I$ for $i\in I$.
In \cite{JKS24,JRS26}, the relation is $\mathsf x^k=\mathsf y^{n-k}$, and
membership in the label means that $\mathsf x_i$ acts by $1$.
Let $r(i)=n+1-i$. Reflection of the circle gives a covariant exact
equivalence between these conventions:
\[ \mathsf x_i\longmapsto\mathsf y_{r(i)},\qquad \mathsf y_i\longmapsto\mathsf x_{r(i)},\qquad L_I\longmapsto M_{r(I)}. \]
For arrows indexed from $i-1$ to $i$, the vertex reflection is
$i\mapsto n-i$. The source necklace becomes the ordinary necklace
of the reflected positroid. The minor relabelling
$\Delta_I\mapsto\Delta_{r(I)}$ differs from column reflection by
$(-1)^{k(k-1)/2}$ in degree one and defines an isomorphism of
homogeneous coordinate rings. We transport the categorical results along
this equivalence, which preserves projectives; $\tw_L$ and $\tw_R$
continue to denote the twists in the original source convention.

We define $\mathcal B_\Pcal$ by transporting the necklace order of
$r(\Pcal)$ along this equivalence. Modules are Cohen--Macaulay over
$\mathbb C[[t]]$. We take a basic projective generator when the
necklace repeats. For disconnected diagrams, this definition uses the
necklace order rather than the unmodified dimer algebra.
We now prove the initial twist identities in this setting.
\subsection{A coordinate formula on the initial object}
Let $T=\bigoplus_{i\in\mathcal S}L_i$ be a rank-one cluster-tilting
object of $\GP\mathcal B_\Pcal$. We write $\Phi_T$ for the Laurent
expression of $\Phi_\Pcal$ in this cluster. Let $\mathsf F\subseteq\mathcal S$
index the projective-injective summands, and put
\[ A=\End(T)^{\mathrm{op}},\qquad e_{\mathsf F}=\sum_{i\in\mathsf F}e_i,\qquad \underline A=A/Ae_{\mathsf F}A,
\qquad H_i=\underline{\Hom}(T,L_i). \]
In this appendix we use left $A$-modules, equivalently right
$\End(T)$-modules, with the action induced by precomposition.
Thus $H_i$ is the corresponding projective left $\underline A$-module
for mutable $i$, and is zero for frozen $i$. Let
\[ \beta:K_0(\op{fd}\underline A)\longrightarrow K_0(\add T)\simeq\ZZ^{\mathcal S} \]
be the \emph{Euler-class map}, defined by Cohen--Macaulay approximations
as in \cite[Section~5.1]{JKS24}. We identify
$K_0(\CM A)\simeq K_0(\op{proj}A)\simeq K_0(\add T)$
using \cite[equation~(4.1)]{JRS26}. Here $\op{fd}$ denotes
finite-dimensional modules, and $K_0(\add T)$ is the \emph{split
Grothendieck group}, with $[L_i]$ identified with $\mathbf e_i$. We use the Fu--Keller character formula
\cite[Section~3]{FK10}, in the form
established for boundary orders in \cite[equation~(6.2)]{JKS24}
\begin{equation}\label{eq:FK} \Phi_T(M)=x^{\ind_T M} \sum_\gamma\chi_c\bigl(\Gr_\gamma\Ext^1(T,M)\bigr)x^{-\beta(\gamma)}.\end{equation}
Equivalently, its values on the mutable simples are the differences of
the two exchange middle terms with the sign in \eqref{eq:FK}; see
\cite[Proposition~5.9]{JKS24}.

\begin{lemma}\label{lem:projective-F}
With these conventions,
\begin{equation}\label{eq:projective-F} q_i(x):=\frac{\Phi_T(\Omega L_i)}{\Phi_T(P_{L_i})} =x_i^{-1}\sum_\gamma\chi_c\bigl(\Gr^\gamma H_i\bigr)x^{\beta(\gamma)}.\end{equation}
Here $\Gr^\gamma$ denotes the Grassmannian of quotient modules. Moreover,
$\sum_{j\in\mathcal S}\beta(\gamma)_j=0$.
\end{lemma}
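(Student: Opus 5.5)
Fix a mutable index $i$ and the conflation $0\to\Omega L_i\to P_{L_i}\to L_i\to0$. The first step is to compute $\ind_T\Omega L_i$ and $\Ext^1(T,\Omega L_i)$ from this conflation. Apply $\Hom(T,-)$. Since $P_{L_i}$ is projective-injective, $\Ext^1(T,P_{L_i})=0$. The long exact sequence then identifies $\Ext^1(T,\Omega L_i)$ with $\underline{\Hom}(T,L_i)=H_i$ as left $A$-modules. The identification is natural in $T$, so it respects the $A$-action. The $\underline A$-module structure is the one induced on the stable category, so $Ae_{\mathsf F}A$ acts by zero.

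For the index I would take a minimal right $\add T$-approximation of $\Omega L_i$. Equivalently, I would use the exchange-type conflation obtained by pulling back along $P_{L_i}$: the term $P_{L_i}$ and the summand $L_i$ give
\[ \ind_T\Omega L_i=[P_{L_i}]-[L_i]=p_{L_i}-\mathbf e_i \]
in $K_0(\add T)$. The justification uses the fact that $\Omega L_i\to P_{L_i}\to L_i$ with $L_i\in\add T$ is already an $\add T$-presentation in the sense used to define the index in \cite[Section~5]{JKS24}. Substituting into \eqref{eq:FK} and dividing by $\Phi_T(P_{L_i})=x^{p_{L_i}}$ gives
\[ q_i=x_i^{-1}\sum_\gamma\chi_c(\Gr_\gamma H_i)\,x^{-\beta(\gamma)}. \]

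The second step converts submodule Grassmannians into quotient Grassmannians. The map $U\mapsto H_i/U$ identifies $\Gr_\gamma H_i$ with $\Gr^{\dv H_i-\gamma}H_i$. So
\[ q_i=x_i^{-1}x^{-\beta(\dv H_i)}\sum_\gamma\chi_c(\Gr^\gamma H_i)\,x^{\beta(\gamma)}, \]
and it remains to prove $\beta([H_i])=0$. I expect this to be the main obstacle. I would argue via Cohen--Macaulay approximations. The class $[H_i]$ is the class of $\coker(\Hom(T,P_{L_i})\to\Hom(T,L_i))$. Hence $\beta([H_i])$ is computed by the resolution of this cokernel by $\Hom(T,\Omega L_i)\to\Hom(T,P_{L_i})\to\Hom(T,L_i)$, together with the kernel correction.

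Concretely, I would use the definition in \cite[Section~5.1]{JKS24}. There $\beta$ of a module $X$ of projective dimension at most one over $A$, in its $\CM$ resolution, is the alternating sum. The sequence $0\to\Hom(T,\Omega L_i)\to\Hom(T,P_{L_i})\to\Hom(T,L_i)\to H_i\to0$ gives
\[ \beta[H_i]=[L_i]-[P_{L_i}]+[\Omega L_i]. \]
Under the identification $K_0(\CM A)\simeq K_0(\add T)$ of \cite[equation~(4.1)]{JRS26}, this alternating sum becomes the class of the conflation in $K_0(\CM A)$, which vanishes because it is exact. If sign conventions make this residual term nonzero, I would absorb it by checking the frozen normalization $q_f=x_f^{-1}$, using Lemma~\ref{lem:schange}.

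The final claim, $\sum_j\beta(\gamma)_j=0$, I would prove by additivity: it suffices to treat simple $\underline A$-modules $S_k$ with $k$ mutable. By \cite[Proposition~5.9]{JKS24}, $\beta(S_k)$ is the difference of the two exchange middle terms. These are rank-one sums of the same total rank in the Grassmannian category, since ranks add in conflations. So the number of summands counted with multiplicity agrees, and the coordinate sum is zero. The identity then extends linearly to all $\gamma$.
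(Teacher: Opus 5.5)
\textbf{There is a genuine gap, and it is in your index computation.} The conflation $0\to\Omega L_i\to P_{L_i}\to L_i\to0$ has $\Omega L_i$ as its \emph{first} term. Since $\Ext^1(L_i,T)=0$, the inflation $\Omega L_i\to P_{L_i}$ is a left $\add T$-approximation with cokernel $L_i\in\add T$. So $[P_{L_i}]-\mathbf e_i$ is $\coind_T(\Omega L_i)$, not $\ind_T(\Omega L_i)$. The index in \eqref{eq:FK} is the class of $\Hom(T,\Omega L_i)$ in $K_0(\CM A)\simeq K_0(\add T)$, and it differs from your value by $\beta([H_i])$.

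This error forces you to try to prove $\beta([H_i])=0$, which is false in general. Take $\Gr(2,4)$ with $L_i=L_{13}$. Then $H_i=S_i$ and $\Omega L_{13}\simeq L_{24}$. The statement gives $q_i=x_i^{-1}(1+x^{\beta(S_i)})$, where $\beta(S_i)$ is the nonzero difference of the two exchange middle terms. Your intermediate formula gives $x_i^{-1}(1+x^{-\beta(S_i)})$ instead.

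Your justification, that the alternating sum is the class of an exact conflation, mixes up two groups. The relation $[\Omega L_i]-[P_{L_i}]+[L_i]=0$ holds in $K_0(\E)$. But the identification with $K_0(\add T)$ sends $M$ to $[\Hom(T,M)]$, and $\Hom(T,-)$ is not right exact on this conflation: its cokernel is exactly $H_i$. The fallback via the frozen normalization cannot help, since $H_f=0$ for frozen $f$ and the discrepancy occurs at mutable vertices.

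\textbf{The repair is already in your third paragraph.} Split
\[ 0\to\Hom(T,\Omega L_i)\to\Hom(T,P_{L_i})\to\Hom(T,L_i)\to H_i\to0 \]
at the image $Z$, which is Cohen--Macaulay. Apply \cite[Lemma~5.2]{JKS24} to $0\to Z\to\Hom(T,L_i)\to H_i\to0$ and additivity to the other piece. This gives
\[ \beta([H_i])=[\Hom(T,\Omega L_i)]-[\Hom(T,P_{L_i})]+[\Hom(T,L_i)], \]
which is precisely the paper's key step. Since $[\Hom(T,\Omega L_i)]=\ind_T(\Omega L_i)$, the identity says $\ind_T(\Omega L_i)-[P_{L_i}]=-\mathbf e_i+\beta([H_i])$. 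So it \emph{corrects} your index rather than showing $\beta([H_i])=0$.

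Substitute this into \eqref{eq:FK}. The extra factor $x^{\beta([H_i])}$ combines with $x^{-\beta(\alpha)}$ to give $x^{\beta([H_i]-\alpha)}$. The submodule--quotient bijection then yields \eqref{eq:projective-F} exactly.

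Your argument for $\sum_j\beta(\gamma)_j=0$ is fine: the two exchange middle terms have equal total rank, with rank-one summands. The paper instead applies the rank homomorphism directly, using that finite-dimensional modules have rank zero; the two arguments are equivalent.
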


\begin{proof}
We apply $\Hom(T,-)$ to a projective deflation
$P_{L_i}\twoheadrightarrow L_i$ and its kernel. This gives
\[ 0\to\Hom(T,\Omega L_i)\to\Hom(T,P_{L_i}) \to\Hom(T,L_i)\to H_i\to0. \]
Let $Z$ be the image of $\Hom(T,P_{L_i})\to\Hom(T,L_i)$.
It is Cohen--Macaulay, since it is a finitely generated torsion-free
$\mathbb C[[t]]$-module. Splitting the sequence at $Z$, we apply
\cite[Lemma~5.2]{JKS24} to the last short exact sequence and additivity in
$K_0(\CM A)$ to the first. We obtain
\[ \beta([H_i])=[\Hom(T,\Omega L_i)]-[\Hom(T,P_{L_i})]+[\Hom(T,L_i)]. \]
Under $K_0(\CM A)\simeq K_0(\add T)$, this becomes
\[ \ind_T(\Omega L_i)-[P_{L_i}]=-\mathbf e_i+\beta([H_i]). \]
Also $\Ext^1(T,\Omega L_i)\simeq H_i$. Substitution in \eqref{eq:FK}
and the correspondence between submodules of class $\alpha$ and quotients
of class $[H_i]-\alpha$ prove \eqref{eq:projective-F}.
Finally, the rank homomorphism sends every $[L_j]$ to $1$, whereas
finite-dimensional $A$-modules have rank zero. Applying it to
$\beta(\gamma)$ gives the asserted sum. This is also the initial-object
case of \cite[equations~(6.9) and (6.11)]{JKS24}.
\end{proof}

\subsection{Gluing along a frozen face}
Suppose a positroid splits across two complementary cyclic intervals,
with smaller positroids $\Pcal_1,\Pcal_2$. Choose source face collections
$\mathcal S_1,\mathcal S_2$, with distinguished frozen faces
$I_*\in\mathcal S_1$, $J_*\in\mathcal S_2$, so that a face collection
for the original positroid is
\begin{equation}\label{eq:faces} \mathcal S= \{I\cup J_*:I\in\mathcal S_1\} \cup\{I_*\cup J:J\in\mathcal S_2\}.\end{equation}
The common face $I_*\cup J_*$ is frozen. The ice quivers are glued at
this face, with no additional arrows involving mutable vertices.
We use \cite[Propositions~4.1 and 4.6]{Pre} for this face decomposition
and \cite[Section~6.4]{JRS26} for the corresponding necklace description.

We next compare the ideals of maps factoring through projective-injective
summands. An isomorphism of corner algebras alone would not suffice.

\begin{lemma}[Frozen factorization ideals]\label{lem:frozen-ideals}
Let $A$ be a basic algebra finite over $\mathcal R=\mathbb C[[t]]$,
complete in its radical topology, with primitive idempotents $e_v$
indexed by $V$. For $S\subseteq V$, write $e_S=\sum_{v\in S}e_v$.
Fix $\mathsf F\subseteq V$, and suppose
$V=V_1\cup V_2$ with $V_1\cap V_2\subseteq\mathsf F$.
Assume that every arrow of the Gabriel quiver having a mutable endpoint
has both endpoints in one of the sets $V_a$. Put
\[ \varepsilon_a=e_{V_a},\qquad \mathsf F_a=V_a\cap\mathsf F,\qquad A_a=\varepsilon_a A\varepsilon_a. \]
Then
\begin{equation}\label{eq:frozen-ideals} \varepsilon_a(Ae_{\mathsf F}A)\varepsilon_a=A_a e_{\mathsf F_a}A_a\qquad(a=1,2).\end{equation}
Moreover, the corner maps induce an isomorphism
\begin{equation}\label{eq:stable-product} A/Ae_{\mathsf F}A\simeq A_1/A_1e_{\mathsf F_1}A_1\times A_2/A_2e_{\mathsf F_2}A_2.\end{equation}
\end{lemma}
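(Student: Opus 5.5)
The plan is to argue with paths in the Gabriel quiver. Because $A$ is basic and complete in its radical topology, the standard structure theorem for complete semiperfect algebras applies: there is a continuous surjection from the completed path algebra of the Gabriel quiver onto $A$. Its kernel is a closed ideal, and the central parameter $t$ is expressed through the arrows. Consequently, every element of $e_uAe_w$ is a convergent sum of images of paths from $u$ to $w$. The inclusion $A_ae_{\mathsf F_a}A_a\subseteq\varepsilon_a(Ae_{\mathsf F}A)\varepsilon_a$ in \eqref{eq:frozen-ideals} is immediate, since $\mathsf F_a\subseteq\mathsf F$ and $A_a=\varepsilon_aA\varepsilon_a$. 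So the work is the reverse inclusion.

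\textbf{The reverse inclusion.} An element of $\varepsilon_a(Ae_{\mathsf F}A)\varepsilon_a$ is a convergent sum of paths $u\to w$ with $u,w\in V_a$ that pass through some frozen vertex $f$. I will show that each such path lies in $A_ae_{\mathsf F_a}A_a$. There are two cases.
\begin{itemize}
\item If the path never leaves $V_a$, then $f\in V_a\cap\mathsf F=\mathsf F_a$. Splitting the path at $f$ writes it as $p\,e_f\,q$ with $p,q\in A_a$.
\item If the path leaves $V_a$, consider its first arrow from a vertex of $V_a$ to a vertex outside $V_a$. That arrow does not have both endpoints in a single $V_b$, so by hypothesis neither endpoint is mutable. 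Hence its source $g$ lies in $\mathsf F_a$. Dually, the last re-entering arrow has frozen target $h\in\mathsf F_a$. This gives the factorization $p\,e_g\,(e_gse_h)\,e_h\,r$, where $p,r$ are paths inside $V_a$ and $e_gse_h\in A_a$. So the path lies in $A_ae_{\mathsf F_a}A_a$.
\end{itemize}

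\textbf{Passing to convergent sums.} This step is the main obstacle. The ideal $A_ae_{\mathsf F_a}A_a$ is a finitely generated submodule of $A_a$, which is finite over the complete noetherian ring $\mathcal R$. By Artin--Rees, such a submodule is closed in the $t$-adic topology, which agrees with the radical topology on a module-finite complete algebra. Hence the convergent sums remain in the ideal, and \eqref{eq:frozen-ideals} follows.

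\textbf{The product decomposition.} Write $\bar A=A/Ae_{\mathsf F}A$. Since $V_1\cap V_2\subseteq\mathsf F$, the images $\bar\varepsilon_1,\bar\varepsilon_2$ are orthogonal idempotents with $\bar\varepsilon_1+\bar\varepsilon_2=1$. For the mixed corners, take $u\in V_1\setminus\mathsf F$ and $w\in V_2\setminus\mathsf F$. Any path $u\to w$ must leave $V_1$, and by the argument above it does so through an arrow with frozen source. Therefore $\bar\varepsilon_1\bar A\bar\varepsilon_2=0$, and by symmetry $\bar\varepsilon_2\bar A\bar\varepsilon_1=0$. Closedness of $Ae_{\mathsf F}A$ handles the convergent sums here as well. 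Thus $\bar A$ is the product of its two diagonal corners. Finally, the corner $\bar\varepsilon_a\bar A\bar\varepsilon_a$ equals $A_a/\varepsilon_a(Ae_{\mathsf F}A)\varepsilon_a$, which is $A_a/A_ae_{\mathsf F_a}A_a$ by \eqref{eq:frozen-ideals}. This proves \eqref{eq:stable-product}.
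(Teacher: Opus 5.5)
Your proposal is correct and follows essentially the same route as the paper: a completed path presentation, closedness of the ideals as $\mathcal R$-submodules of finite $\mathcal R$-modules, factoring each path at its first exit from $V_a$ through a frozen vertex of $V_a$, and vanishing of the mixed corners modulo $Ae_{\mathsf F}A$.

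One small slip needs fixing. If the source $g$ of the exit arrow lies in $V_1\cap V_2$, then the arrow does have both endpoints in $V_{3-a}$, and its target may be mutable. So argue as the paper does: if $g$ were mutable, it would lie in $V_a\setminus V_{3-a}$, and the arrow would contradict the hypothesis. This still gives $g\in\mathsf F_a$, and dually $h\in\mathsf F_a$ for the re-entry vertex.
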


\begin{proof}
We use a completed path presentation of $A$. Such a presentation is
obtained by lifting a basis of each component of
$\op{rad}A/(\op{rad}A)^2$ and using completeness.
All the ideals in \eqref{eq:frozen-ideals} are closed: they are
$\mathcal R$-submodules of finite $\mathcal R$-modules, and the
radical and $t$-adic topologies are equivalent.

Consider a path with endpoints in $V_a$ that passes through a frozen
vertex. If the path stays in $V_a$, it factors through $e_{\mathsf F_a}$.
If it leaves $V_a$, take its first arrow leaving $V_a$.
Its initial vertex is frozen. Indeed, if that vertex were mutable,
it would belong to $V_a\setminus V_{3-a}$, and the arrow could not
have both endpoints in either $V_1$ or $V_2$.
The path therefore again factors through a vertex of $\mathsf F\cap V_a$.
The two factors have endpoints in $V_a$, so they are elements of
$A_a$, even if one makes an excursion outside $V_a$.
Passing to sums and limits proves
$\varepsilon_a(Ae_{\mathsf F}A)\varepsilon_a\subseteq A_a e_{\mathsf F_a}A_a$; the reverse inclusion is immediate.

Every path from $V_1\setminus\mathsf F$ to $V_2\setminus\mathsf F$, or in
the reverse direction, meets a frozen vertex by the same argument.
Thus the off-diagonal corners vanish in $A/Ae_{\mathsf F}A$. The images of
$\varepsilon_1,\varepsilon_2$ are orthogonal and sum to one because
$V_1\cap V_2\subseteq\mathsf F$. Formula \eqref{eq:frozen-ideals} identifies the two diagonal
corners and gives \eqref{eq:stable-product}.
\end{proof}

We apply the lemma to $A=\End(T)^{\mathrm{op}}$ and to the two
face collections obtained by adjoining $J_*$ and $I_*$, respectively,
as in \eqref{eq:faces}. These algebras are finite
$\mathbb C[[t]]$-orders. By \cite[Theorem~6.5]{JRS26}, their
Gabriel quivers agree with the plabic quivers after arrows between
frozen vertices have been omitted. Proposition~4.6 of \cite{Pre}
therefore verifies the arrow hypothesis of Lemma~\ref{lem:frozen-ideals}.
It is not necessary to control the omitted arrows between frozen vertices.

Adding the common subset to the rank-one labels identifies each corner
with the corresponding smaller endomorphism algebra, preserving its
vertex idempotents \cite[Corollary~6.4]{JRS26}. Under these
identifications, $e_{\mathsf F_a}$ is the smaller frozen idempotent. Thus
\eqref{eq:frozen-ideals} proves that the corner identification descends
to the stable quotients. Formula \eqref{eq:stable-product} becomes
\[ \underline A\simeq\underline A_1\times\underline A_2. \]
In particular, the projective $H_i$ for a mutable vertex on one side
is the corresponding projective for that factor; its quotient
Grassmannians agree with those in the smaller category.

\begin{lemma}\label{lem:beta-gluing}
Under this stable decomposition, the restriction of $\beta$ to either
factor is the exponent map of the corresponding smaller positroid, extended by zero to the other face
collection. The common frozen coordinate is retained.
\end{lemma}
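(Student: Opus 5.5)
The plan is to compute $\beta$ on the stable simples through its definition by Cohen--Macaulay approximations \cite[Section~5.1]{JKS24} and to compare the two sides term by term. By additivity it is enough to treat the simple modules $S_v$ of $\underline A$. Under $\underline A\simeq\underline A_1\times\underline A_2$, each $S_v$ is a simple of exactly one factor; its vertex $v$ is mutable and lies in $V_a\setminus\mathsf F$. So the statement reduces to the following claim. For a mutable vertex $v$ on side $a$, the class $\beta(S_v)\in\ZZ^{\mathcal S}$ is supported on the faces of the form $I\cup J_*$ with $I\in\mathcal S_1$ (for $a=1$), and under the relabelling $I\cup J_*\leftrightarrow I$ it equals $\beta_1(S_v)$.

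To prove the claim I would use the description recalled after \eqref{eq:FK}: by \cite[Proposition~5.9]{JKS24}, $\beta(S_v)$ is the difference of the classes of the two exchange middle terms at $v$. Equivalently, one takes the minimal $\add T$-approximation sequences $0\to L_v\to E'\to L_v^*\to0$ and $0\to L_v^*\to E\to L_v\to0$, and forms $[E]-[E']$ up to the paper's sign. These middle terms are read off from the arrows at $v$ in the Gabriel quiver of $A$. The arrow hypothesis verified through \cite[Theorem~6.5]{JRS26} and \cite[Proposition~4.6]{Pre} says that every arrow at $v$ has both endpoints in $V_a$. Hence $E$ and $E'$ lie in $\add$ of the summands indexed by $V_a$.

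Next I would use the corner identification \cite[Corollary~6.4]{JRS26}, which adds the common label $J_*$. It is induced by the exact functor $L_I\mapsto L_{I\cup J_*}$ on the relevant rank-one modules, or by $\varepsilon_a$-restriction at the level of $\Hom(T,-)$. It therefore carries the exchange sequences of the smaller positroid at $v$ to those of the glued one, and matches $L_I$ with $L_{I\cup J_*}$. As a consequence, the coefficients of $[E]-[E']$ on $V_a$ coincide with $\beta_1(S_v)$. This includes the coefficient at the distinguished frozen face $I_*\cup J_*$, and that face is the common frozen coordinate that is retained. All coefficients on $V_{3-a}\setminus V_a$ vanish, which is the extension by zero. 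As a consistency check, the rank-zero property $\sum_j\beta(\gamma)_j=0$ from Lemma~\ref{lem:projective-F} holds on both sides.

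I expect the main obstacle to be the last step: that the minimal approximations in $\GP\mathcal B_\Pcal$ restrict to minimal approximations in the smaller category. The concern is that a frozen summand on the far side might appear in the approximation through paths that leave $V_a$ through a frozen vertex. I would handle this with \eqref{eq:frozen-ideals}. Any morphism between summands in $V_a$ that factors through $V_{3-a}$ already factors through $e_{\mathsf F_a}$ inside $A_a$. So the radical maps at $v$ computed in $A$ and in $A_a$ agree, and the approximations, together with their multiplicities, can be computed inside the corner.
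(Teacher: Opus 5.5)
Your proposal is correct, and its skeleton is the paper's. You reduce to mutable simples, write $\beta([S_v])$ as the difference of exchange middle terms via \cite[Proposition~5.9]{JKS24}, and use the arrow hypothesis to confine these terms to $V_a$.

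Where you differ is in matching the arrows at $v$ with those of the smaller collection, including arrows to frozen vertices such as $I_*\cup J_*$. The paper does this combinatorially: it compares both Gabriel quivers with the plabic quivers via \cite[Theorem~6.5]{JRS26} and \cite[Proposition~4.6]{Pre}. You do it algebraically: you compare $\op{rad}/\op{rad}^2$ at $v$ in $A$ and in the corner $A_a$, then transport along \cite[Corollary~6.4]{JRS26}. Your route uses the plabic description only for the arrow hypothesis, and takes the exact multiplicities from the corner isomorphism.

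This needs the path-level form of the frozen-factorization argument, not \eqref{eq:frozen-ideals} as stated. When the other endpoint of an arrow at $v$ is frozen, membership in $A_ae_{\mathsf F_a}A_a$ does not give membership in $\op{rad}^2A_a$, because the factorization may be trivial at that endpoint. Instead, split a path through a vertex outside $V_a$ at its first exit from $V_a$ (for arrows out of $v$) or at its last entry into $V_a$ (for arrows into $v$). This gives two factors with endpoints in $V_a$, both radical in $A_a$: one has the mutable endpoint $v$, and the other contains an arrow leaving or entering $V_a$.

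Finally, the exact functor $L_I\mapsto L_{I\cup J_*}$ you invoke is not established, and it is not needed, since only the arrow multiplicities at $v$ enter $\beta([S_v])$.
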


\begin{proof}
It is enough to consider the classes of mutable simple modules, which
form a basis of the Grothendieck group of the finite-dimensional stable
algebra. For such a simple on the first side, the two exchange middle
terms involve only vertices in $V_1$, and are obtained from the exchange middle terms of the first collection by
adjoining $J_*$ to their labels. Their difference is
$\beta([S_i])$, with the sign fixed in \eqref{eq:FK}.
The agreement of these arrows, including those incident with frozen
vertices, follows from the same quiver identification and
\cite[Proposition~4.6]{Pre}. Thus $\beta([S_i])$ is the extension
by zero of $\beta_1([S_i])$. The second side is identical, and
additivity proves the assertion.
\end{proof}
We use variables $a_I$, $b_J$ for the two smaller clusters. The
\emph{Segre embedding} of homogeneous coordinate rings gives an injection
$\partial$ on the original function field with
\begin{equation}\label{eq:Segre} \partial(x_{I\cup J_*})=a_Ib_{J_*},\qquad \partial(x_{I_*\cup J})=a_{I_*}b_J.\end{equation}
For the moment use an ordering in which the two intervals are consecutive;
the signs for a general ordering are addressed below.

By Lemma~\ref{lem:beta-gluing}, a class $\gamma$ from the first stable
factor has exponent vector supported on the face collection obtained by adjoining $J_*$.
Since its coordinates sum to zero,
\begin{equation}\label{eq:beta-Segre} \partial(x^{\beta(\gamma)})=a^{\beta_1(\gamma)}.\end{equation}
Indeed, the possible power of $b_{J_*}$ is exactly
$\sum_j\beta(\gamma)_j=0$. Combining
\eqref{eq:projective-F}, \eqref{eq:stable-product}, and
\eqref{eq:beta-Segre} gives
\begin{equation}\label{eq:q-glue} \partial(q_{I\cup J_*})=q_I^{(1)}b_{J_*}^{-1},\qquad \partial(q_{I_*\cup J})=a_{I_*}^{-1}q_J^{(2)}.\end{equation}
For the shared frozen face this says $q_* =x_*^{-1}$.

The geometric left twist respects the direct-sum decomposition of the
matrices, hence the Segre decomposition of coordinate rings
\cite[Proposition~4.20 and Corollary~4.21]{Pre}. Assuming the initial
syzygy character identity for the smaller positroids, it follows that
\[ \partial\bigl(\tw_L^*x_{I\cup J_*}\bigr) =\tw_{L,1}^*(a_I)\,\tw_{L,2}^*(b_{J_*}) =q_I^{(1)}b_{J_*}^{-1}. \]
Together with \eqref{eq:q-glue} and injectivity of $\partial$, this
proves the initial identity on the first collection. The same argument applies to the second collection. Thus gluing preserves
the identity on the initial cluster.

For a general cut in the cyclic order, \eqref{eq:Segre} contains the
usual shuffle signs. Such a sign is a character of the column-weight
lattice. Every monomial $x^{\beta(\gamma)}$ has column weight zero: this
holds on the generators $\beta([S_j])$ because the two Pl\"ucker
exchange monomials have the same column weight. Thus no sign appears
in \eqref{eq:beta-Segre}. The sign attached to $x_i^{-1}$ in
\eqref{eq:projective-F} is the inverse of the sign attached to $x_i$,
and these are equal. The same computation therefore proves the result
with the standard signs as well.

Loops and coloops can be removed first. A \emph{loop} is a zero column;
a \emph{coloop} is a column belonging to every basis. Deleting such a position
from all rank-one profiles deletes a step common to every profile.
In the equations for a morphism between two profiles, that step repeats
the same valuation constraint. Thus deletion preserves the endomorphism
algebras, their stable projectives, and the exchange exponents used in
\eqref{eq:projective-F}. On the geometric side, a loop simply removes a
zero column. For a coloop, choose a representative with the coloop in a
one-dimensional summand, with entry $s\ne0$. Then
$\Delta_I=s\Delta_{I\setminus\{j\}}$, up to the fixed ordering sign,
and the normalized twist sends this summand to the one with entry
$s^{-1}$. Setting $s=1$ gives the degree-preserving minor
identification for the contracted positroid and intertwines the twists.
Consequently the initial identities are unchanged by either contraction.
A positroid with a single basis has just one homogeneous coordinate and
no mutable vertices; both transformations send that coordinate to its reciprocal.

Induction on connected components now proves the initial identities.
Corollary~\ref{cor:seed} gives \eqref{eq:positroid} for every object of
$\GP\mathcal B_\Pcal$, completing the proof of
Theorem~\ref{thm:geometric}.

\end{document}